\documentclass[10pt]{article}
\usepackage{latexsym, amssymb, amsmath, amsthm, amscd}
\usepackage[all,cmtip]{xy}
\usepackage{amsmath}
\usepackage{amsthm}
\usepackage{amssymb}
\usepackage{amsfonts}
\usepackage{amsrefs}
\usepackage{color,authblk}
\usepackage{tikz}

\usepackage{amscd} 
\usepackage{comment}
\usepackage{mathrsfs}
\usepackage[all]{xy}
\usepackage{graphicx}
\usepackage{authblk}
\usepackage{hyperref}
\usepackage{fullpage}
\usepackage[all,cmtip]{xy}
\usepackage{relsize}
\usepackage{amsmath,amscd}
\usepackage{amssymb}
\usepackage{stackrel}
\usepackage{tikz-cd}
\usepackage{tikz}
\usetikzlibrary{arrows}
\usetikzlibrary{matrix}
\usepackage[mathscr]{euscript}

\usepackage[all]{xy}

\usepackage{hyperref}

\numberwithin{equation}{section} \DeclareMathSizes{2}{10}{12}{13}
\parindent=0.0in

 \textwidth 7.0in
\textheight 8.2in

\oddsidemargin -0.3in
\evensidemargin -0.5in

\makeatletter
\newcommand*{\doublerightarrow}[2]{\mathrel{
  \settowidth{\@tempdima}{$\scriptstyle#1$}
  \settowidth{\@tempdimb}{$\scriptstyle#2$}
  \ifdim\@tempdimb>\@tempdima \@tempdima=\@tempdimb\fi
  \mathop{\vcenter{
    \offinterlineskip\ialign{\hbox to\dimexpr\@tempdima+1em{##}\cr
    \rightarrowfill\cr\noalign{\kern.5ex}
    \rightarrowfill\cr}}}\limits^{\!#1}_{\!#2}}}

\newcommand{\leftrarrows}{\mathrel{\raise.75ex\hbox{\oalign{%
  $\scriptstyle\leftarrow$\cr
  \vrule width0pt height.5ex$\hfil\scriptstyle\relbar$\cr}}}}
\newcommand{\lrightarrows}{\mathrel{\raise.75ex\hbox{\oalign{%
  $\scriptstyle\relbar$\hfil\cr
  $\scriptstyle\vrule width0pt height.5ex\smash\rightarrow$\cr}}}}
\newcommand{\Rrelbar}{\mathrel{\raise.75ex\hbox{\oalign{%
  $\scriptstyle\relbar$\cr
  \vrule width0pt height.5ex$\scriptstyle\relbar$}}}}

\makeatletter
\def\leftrightarrowsfill@{\arrowfill@\leftrarrows\Rrelbar\lrightarrows}
\newcommand{\xleftrightarrows}[2][]{\ext@arrow 3399\leftrightarrowsfill@{#1}{#2}}
\makeatother

\newtheorem{thm}{Proposition}[section]
\newtheorem{Thm}[thm]{Theorem}
\newtheorem{rem}[thm]{Remark}

\newtheorem{cor}[thm]{Corollary}

\newtheorem{lem}[thm]{Lemma}
\newtheorem{defn}[thm]{Definition}

\newtheorem{examples}[thm]{Example}

\title{Categorification of modules and construction of schemes}
\author{Abhishek Banerjee \footnote{Department of Mathematics, Indian Institute of Science, Bangalore. Email: abhishekbanerjee1313@gmail.com.}
$\qquad$ Subhajit Das \footnote{Department of Mathematics, Indian Institute of Science, Bangalore. Email: subhajitdas@iisc.ac.in.} 
$\qquad$ Surjeet Kour \footnote{Department of Mathematics, Indian Institute of Technology, Delhi. Email: koursurjeet@gmail.com.}}
\date{}
\begin{document}

\linespread{0.95}
\maketitle 

\medskip

\begin{abstract} We use categorification of monoid actions to study algebraic geometry over symmetric monoidal categories. This brings together the relative algebraic geometry over symmetric monoidal categories  developed by To\"{e}n and Vaqui\'{e}, along with the theory of actegories over monoidal categories. We obtain schemes over a datum $(\mathcal C,\mathcal M)$, where $(\mathcal C,\otimes,1)$ is a symmetric monoidal category and 
$\mathcal M$ is an actegory over $\mathcal C$. One of our main tools is  using the datum 
$(\mathcal C,\mathcal M)$ to give a Grothendieck topology on the category  of affine schemes over $(\mathcal C,\otimes,1)$ that we call the ``spectral 
$\mathcal M$-topology.'' This consists of ``fpqc $\mathcal M$-coverings'' with certain special properties.   We provide a description of  schemes over 
$(\mathcal C,\mathcal M)$ in terms of quotients of disjoint unions of affine schemes over a certain equivalence relation.  These categories of schemes are closed under pullbacks and coproducts, and are equipped with change of base functors induced by symmetric monoidal adjuctions accompanied by lax $\mathcal C$-linear functors. 
\end{abstract}

\medskip
MSC(2020) Subject Classification: 14A30, 18M05

\medskip
Keywords : symmetric monoidal categories, actegories, fpqc coverings

\section{Introduction}

In this paper, we use categorification of monoid actions to study algebraic geometry over symmetric monoidal categories. The relative algebraic geometry over a symmetric monoidal category $(\mathcal C,\otimes,1)$ was developed by To\"{e}n and Vaqui\'{e} \cite{TV}. Actegories over tensor categories have been studied in various contexts by a number of authors 
(see, for instance, B\'{e}nabou \cite{Ben}, Garner \cite{Garn}, Janelidze and Kelly \cite{JK}, Ostrik \cite{Ost}, Street \cite{Street}). We develop a notion of scheme over a datum $(\mathcal C,\mathcal M)$, where $\mathcal C$ is a symmetric monoidal category and $\mathcal M$ is an actegory over $\mathcal C$.

\smallskip
The idea of doing algebraic geometry over symmetric monoidal categories has roots in the work of Deligne \cite{Del}, as also in that of Hakim \cite{Hak}. In \cite{TV}, To\"{e}n and Vaqui\'{e}  gave a notion of scheme over a symmetric monoidal category $(\mathcal C, \otimes, 1)$ that is purely categorical, along with notions of Zariski immersions, Zariski coverings and fpqc coverings. By choosing the category  $(\mathcal C, \otimes, 1)$ appropriately, one opens up a number of new geometries by taking $\mathcal C$ for instance to be the category of sets, or commutative monoids, or the category of symmetric spectra. When $\mathcal C$ is taken to be the category of $\mathbb Z$-modules, one recovers the usual notion of a scheme over $Spec(\mathbb Z)$. There is also a homotopy version of the theory in \cite{TV}, obtained by taking $\mathcal C$ to be a symmetric monoidal model category. The latter has been developed into a theory of homotopical algebraic geometry using higher categorical structures by To\"{e}n and Vezzosi in \cite{TVe1}, \cite{TVe2}. The ideas of \cite{TV} have also been explored for instance in \cite{Ban1}, \cite{Ban2}, \cite{Ban3}, \cite{Mart1}, \cite{Mart2} (see also related work by Connes and Consani in \cite{CC}, 
\cite{CCJAG}, \cite{CCJNT}, 
\cite{CC2}). 

\smallskip
Let $(\mathcal C, \otimes, 1)$ be a symmetric monoidal category. In the framework of To\"{e}n and Vaqui\'{e}  \cite{TV}, the category of affine schemes is taken to be the opposite of that of commutative monoid objects in $(\mathcal C, \otimes, 1)$. Thereafter, the theory in \cite{TV} is developed by using objects in $\mathcal C$ equipped with module actions of commutative monoid objects in $(\mathcal C, \otimes, 1)$. This suggests that we should be able to study a similar algebraic geometry by categorifying not just the notion of a commutative monoid, but also that of modules over it. We see this as motivated by the ``microcosm principle'' (see  \cite[$\S$ 4.3]{BaDo}) which says that certain algebraic structures can be internalized in categorifications of the same strutures. For instance, since a (symmetric) monoidal category is the categorification of the notion of a (commutative) monoid, one studies (commutative) monoids in (symmetric) monoidal categories. Similarly,  actegories categorify monoid actions on sets and hence they are the natural setup for studying modules over commutative monoid objects in symmetric monoidal categories. 

\smallskip The starting point of this paper is to consider a datum $(\mathcal C,
\mathcal M)$, where $\mathcal C$ is a symmetric monoidal category and $\mathcal M$ is a category equipped with a functor 
\begin{equation}\label{1.1}
\boxtimes: \mathcal C\times\mathcal M\longrightarrow \mathcal M
\end{equation} satisfying certain conditions similar to that of an action of a monoid (see Section 2 for details). In recent years, there has been a lot of interest in such a framework $(\mathcal 
C,\mathcal M)$ (see, for instance, Etingof and Ostrik \cite{EO}, Garner \cite{Garn}, Gelaki \cite{Gel}, Walton and Yadav \cite{WaYa}).  The setup of actegories over monoidal categories has proved to be extremely versatile, with applications in various fields such as the theory of subfactors (see \cite{BEK}, 
\cite{O}), weak Hopf algebras (see \cite{Ost}, \cite{Ks}) and in conformal field theory 
(see \cite{FS}). This theory becomes particularly rich when the monoidal category has certain additional structures, such as rigidity or (multi-)fusion structures (see for instance, \cite{Et0}, \cite{Et}). As such, actegories over monoidal categories have increasingly become a topic of study in their own right. For more on this subject, we refer the reader for instance to 
\cite[$\S$ 5]{EGNO}. 

\smallskip
We start with a datum $(\mathcal C,\mathcal M)$ as in \eqref{1.1}.  Our construction is fairly general, especially in the sense that  we do not require the categories in \eqref{1.1}   to be additive. Given a commutative monoid object $a$ in $(\mathcal C,\otimes,1)$, we consider the category  $\mathbf{Mod}_{\mathcal M}(a)$ of $a$-modules in $\mathcal M$. The objects of  $\mathbf{Mod}_{\mathcal M}(a)$ are pairs $(m,\rho)$ where $m\in \mathcal M$ and $\rho:a\boxtimes m\longrightarrow m$ is a morphism satisfying conditions similar to a module action (see Definition \ref{D2.1dc}). We then build up the key notions of coverings by using properties of the adjoint functors of ``restriction of scalars'' and ``extension of scalars'' between these module categories. 

\smallskip One of the main  tools in our paper is the use of the datum 
$(\mathcal C,\mathcal M)$ to define a Grothendieck topology on the category $Aff_{\mathcal C}$ of affine schemes over $(\mathcal C,\otimes,1)$, which we refer to as the ``spectral 
$\mathcal M$-topology.'' This consists of ``fpqc $\mathcal M$-coverings'' with certain special properties that we explain in Section 3. We obtain schemes over $(\mathcal C,\mathcal M)$ using coverings by means of affine schemes in the category of sheaves for the spectral $\mathcal M$-topology on $Aff_{\mathcal C}$. We will typically refer to these as $\mathcal M$-schemes. The category of $\mathcal M$-schemes  appears to have good properties in that it is closed under pullbacks, disjoint unions and any scheme may be recovered as the quotient of an equivalence relation on the disjoint union of affine schemes.  
We also incorporate change of base functors between categories of schemes over respective data  $(\mathcal C,\mathcal M)$ and $(\mathcal C',\mathcal M')$ as in \eqref{1.1} connected by a lax functor and an adjunction between the symmetric monoidal categories
$\mathcal C$ and $\mathcal C'$. We mention here that in future, we hope to extend our methods to the homotopical context, where $\mathcal C$ and $\mathcal M$ additionally carry model structures. 

\smallskip
We now describe the paper in more detail. We begin with an actegory $\mathcal M$ over $\mathcal C$ as in \eqref{1.1}. Let $Comm(\mathcal C)$ denote the category of commutative monoid objects in $\mathcal C$. For $a\in Comm(\mathcal C)$, the category  $\mathbf{Mod}_{\mathcal M}(a)$ of $a$-modules with values in $\mathcal M$ is described in Section 2. For a morphism $\alpha: a\longrightarrow b$ in $Comm(\mathcal C)$, we construct an adjoint pair $(\alpha^\ast,\alpha_\ast)$ of extension and restriction of scalars between the categories $\mathbf{Mod}_{\mathcal M}(a)$ and  $\mathbf{Mod}_{\mathcal M}(b)$. Some of the material in Section 2 is possibly well known, but we record it here in order to fix notation. Further, we show that the assignment $a\mapsto  \mathbf{Mod}_{\mathcal M}(a)$ determines a pseudo-functor from $Comm(\mathcal C)$ to the $2$-category of categories. 

\smallskip
As in \cite{TV}, we take the category $Aff_{\mathcal C}$ of affine schemes to be the opposite of the category $Comm(\mathcal C)$ of commutative monoid objects in $\mathcal C$. For any $a\in Comm(\mathcal C)$, we let $Spec(a)\in Aff_{\mathcal C}=Comm(\mathcal C)^{op}$ denote the corresponding affine scheme. Similarly, for a morphism $\alpha:a\longrightarrow b$ in $Comm(\mathcal C)$, we denote by   $\alpha^{op}:
Spec(b)\longrightarrow Spec(a)$  the  corresponding morphism of affine schemes in $Aff_{\mathcal C}$.

\smallskip In Section 3, we define the topology on $Aff_{\mathcal C}$. We say that a morphism $\alpha:a\longrightarrow b$ in $Comm(\mathcal C)$ is $\mathcal M$-flat if $\alpha^*:\mathbf{Mod}_{\mathcal M}(a)\longrightarrow \mathbf{Mod}_{\mathcal M}(b)$ is exact. We will say that a collection $\{\alpha_i^{op}:Spec(a_i)\longrightarrow Spec(a)\}_{i\in I}$ of $\mathcal M$-flat morphisms is 
an fpqc $\mathcal M$-cover for $Spec(a)$ if there exists a finite subset $J\subseteq I$ such that the collection of functors
$
      \left\{  \alpha_j^{*}   : \mathbf{Mod}_{\mathcal M}(a) \longrightarrow  \mathbf{Mod}_{\mathcal M}(a_j)\right\}_{j\in J}
$
is conservative (see Definition \ref{fpqcM9}).  If $\alpha:a\longrightarrow b$ is $\mathcal M$-flat and an epimorphism of finite type in $Comm(\mathcal C)$, we will say that $\alpha^{op}$ is a spectral immersion relative to $\mathcal M$. A spectral $\mathcal M$-cover  $\{\alpha_i^{op}:Spec(a_i)\longrightarrow Spec(a)\}_{i\in I}$ is an fpqc $\mathcal M$-cover such that each $\alpha_i^{op}$ is a spectral  immersion relative to $\mathcal M$. The main result of Section 3 is that both the fpqc $\mathcal M$-coverings as well as the spectral $\mathcal M$-coverings define Grothendieck topologies on $Aff_{\mathcal C}$ (see Proposition 
\ref{checkfortopology}). From then onwards, we set 
$Sh(Aff_{\mathcal C})_{\mathcal M}$  be the category of sheaves on $Aff_{\mathcal C}$ with respect to the spectral $\mathcal M$-topology.

\smallskip  We say that the $\mathcal C$-actegory $(\mathcal M, \boxtimes)$ is subcanonical  if the fpqc $\mathcal M$-topology (and hence, the spectral $\mathcal M$-topology)   on $Aff_{\mathcal C}$ is subcanonical, i.e., for each $Spec(a)\in Aff_{\mathcal C}$, the representable presheaf $Aff_{\mathcal C}(\_\_, Spec(a))$ on $Aff_{\mathcal C}$ is a sheaf  for the fpqc $\mathcal M$-topology on $Aff_{\mathcal C}$.  In this situation, an affine scheme
$Spec(a)\in Aff_{\mathcal C}$ may be treated as an object of $Sh(Aff_{\mathcal C})_{\mathcal M}$. For most of this paper, we will assume that  $(\mathcal M, \boxtimes)$ is subcanonical. We mention that later on in Section 6, we have collected a number of examples of subcanonical actegories arising in various natural ways such as from diagonal actions, functor categories, representation categories of monoids and from directed graphs. 

\smallskip
In Section 4, we  give the definition of Zariski open immersion relative to $\mathcal M$ for a morphism $f:F\longrightarrow G$ in $Sh(Aff_{\mathcal C})_{\mathcal M}$ (see Definition \ref{zariskiopenforsheaves}). We show that this notion is stable under base change and closed under composition in 
$Sh(Aff_{\mathcal C})_{\mathcal M}$. By an $\mathcal M$-scheme (see Definition \ref{defin4.8}), we mean a sheaf $F\in Sh(Aff_{\mathcal C})_{\mathcal M}$ for the spectral 
$\mathcal M$-topology on $Aff_{\mathcal C}$ such that there is an epimorphism
\begin{equation}
\coprod_{i \in I} X_i \longrightarrow F
\end{equation}
in $Sh(Aff_{\mathcal C})_{\mathcal M}$  where each $X_i$ is an affine scheme and each $X_i\longrightarrow F$ is a Zariski open immersion relative to $\mathcal M$. We show that the full subcategory $Sch(\mathcal C)_{\mathcal M}$ of $Sh(Aff_{\mathcal C})_{\mathcal M}$ consisting of $\mathcal M$-schemes is closed under coproducts and pullbacks. We also provide a description (see Theorem \ref{equivalencerel}) of $\mathcal M$-schemes in terms of quotients of disjoint unions of affine schemes over an equivalence relation satisfying certain properties. 

\smallskip
In Section 5, we consider symmetric monoidal categories $(\mathcal C,\otimes,1)$ and $(\mathcal D,\otimes,1)$ which are connected by an adjunction $ \left(\mathbf B :  \mathcal C \longrightarrow \mathcal D,\text{ }\mathbf A : \mathcal D \longrightarrow \mathcal C\right)$ that satisfies certain monoidal properties.   We also consider  a  $\mathcal C$-actegory $(\mathcal M, \boxtimes)$, a $\mathcal D$-actegory $(\mathcal N, \boxplus)$ and a lax 
$\mathcal C$-linear functor 
\begin{equation}\label{1.3}
(\mathbb L,\Gamma):\mathbf B_\ast(\mathcal N)=(\mathcal N,\boxplus^{\mathbf B})\longrightarrow (\mathcal M,\boxtimes)
\end{equation} where $\boxplus^{\mathbf B}$ denotes the restriction of the $\mathcal D$-action on $\mathcal N$ to a $\mathcal C$-action by means of the functor $\mathbf B :  \mathcal C \longrightarrow \mathcal D$. In that  case,  we have an adjoint pair  (see Theorem \ref{basechangingscheme})
\begin{equation} (\widetilde{\mathbf A_!} : Sh(Aff_{\mathcal C})_{\mathcal M} \longrightarrow Sh(Aff_{\mathcal D})_{\mathcal N}, \qquad \widetilde{\mathbf B_!} : Sh(Aff_{\mathcal D})_{\mathcal N} \longrightarrow Sh(Aff_{\mathcal C})_{\mathcal M})
\end{equation} 
    Additionally, if $\mathcal M$ and $\mathcal N$ are subcanonical, then we obtain a change of base functor  $\widetilde{\mathbf A_!}: Sch(\mathcal C)_{\mathcal M} \longrightarrow Sch(\mathcal D)_{\mathcal N}$ at the level of schemes.   

\smallskip
{\bf Acknowledgements:} Authors A.B. and S.K. are grateful to the Research in Pairs program at the CIRM in Marseille, where part of this paper 
was written.  Author S.D. was supported by Prime Minister's Research Fellowship PMRF-21-4890.03.

\section{Categorical actions and the pseudo-functor $\mathbf{Mod}_{\mathcal M}$}
Throughout, we let $(\mathcal C, \otimes, 1)$ be a closed symmetric monoidal category which is both complete and cocomplete. We recall (see \cite{act}, \cite{JK}) that a left $\mathcal C$-actegory  $(\mathcal M, \boxtimes, \lambda, l)$  consists of a bifunctor $\boxtimes : \mathcal C \times \mathcal M \longrightarrow \mathcal M$ and a collection of isomorphisms
\begin{equation}
    \begin{split}
    \lambda &= \left(\lambda_{a, b, m} : a \boxtimes (b \boxtimes m) \xrightarrow{\sim} (a \otimes b) \boxtimes m \big{\vert} a, b \in \mathcal C, m \in \mathcal M\right)\\
    l &= \left(l_m : 1 \boxtimes m \xrightarrow{\sim} m \big{\vert} m \in \mathcal M\right)
    \end{split}
\end{equation} which are natural and subject to certain coherence axioms. Right $\mathcal C$-actegories are defined similarly. Let $\mathcal M = (\mathcal M, \boxtimes)$ and $\mathcal M' = (\mathcal M', \boxtimes')$ be left $\mathcal C$-actegories. A lax $\mathcal C$-linear functor $(L, J) : \mathcal M \longrightarrow \mathcal M'$ consists of a functor $L : \mathcal M \longrightarrow \mathcal M'$ together with a natural transformation $J = \{J_{c, m} : c \boxtimes' L(m) \longrightarrow L(c \boxtimes m) : (c, m) \in \mathcal C \times \mathcal M\}$ satisfying certain coherence axioms (see \cite{act}, \cite{JK}). Additionally, if $J$ is a natural isomorphism, then $(L, J)$ is called a strong $\mathcal C$-linear functor. We denote the category of left $\mathcal C$-actegories and lax $\mathcal C$-linear functors by $\mathcal C-Act^{lx}$.

\smallskip
From now onwards, we fix a left $\mathcal C$-actegory $(\mathcal M, \boxtimes, \lambda, l)$ such that the underlying category $\mathcal M$ is both complete and cocomplete. Additionally, for every $a \in \mathcal C, m \in \mathcal M$, we assume that the functors $a \boxtimes \_\_ : \mathcal M \longrightarrow \mathcal M$ and $\_\_ \boxtimes m : \mathcal C \longrightarrow \mathcal M$ preserve colimits. We let $Comm(\mathcal C)$ denote the category of commutative monoid objects in $\mathcal C$.  

\begin{defn}\label{D2.1dc}
   Let $a = (a, \mu : a \otimes a \longrightarrow a, \iota : 1 \longrightarrow a) \in Comm(\mathcal C)$. A left $a$-module in $\mathcal M$ is a pair $(m, \rho)$ consisting of an object $m \in M$ and a morphism $\rho : a \boxtimes m \longrightarrow m$ such that
    \begin{equation}\label{newnameoldstuff}
        \rho \circ (\mu \boxtimes 1_m) \circ \lambda_{a, a, m} = \rho \circ (1_a \boxtimes \rho):a\boxtimes (a\boxtimes m)\longrightarrow m\qquad \qquad \rho \circ (\iota \boxtimes 1_m) = l_m: 1\boxtimes m\longrightarrow m
    \end{equation}
    A morphism $\psi : (m, \rho) \longrightarrow (m', \rho')$ of left $a$-modules is a morphism $\psi : m \longrightarrow m'$ in $\mathcal M$ such that $\rho' \circ (1_a \boxtimes \psi) = \psi \circ \rho$. For $a \in Comm(\mathcal C)$, we let $\mathbf{Mod}_{\mathcal M}(a)$ denote the category of left $a$-modules. 
\end{defn}

\begin{rem}\label{creation}
\emph{For $a \in Comm(\mathcal C)$, we may consider the associated monad $\mathbf T = a \boxtimes \_\_$ on $\mathcal M$. Then, the Eilenberg-Moore category of $\mathbf T$-algebras is the category $\mathbf{Mod}_{\mathcal M}(a)$. Hence, the forgetful functor $U_a :  \mathbf{Mod}_{\mathcal M}(a) \longrightarrow \mathcal M$ reflects isomorphisms and creates limits. Further, since the endofunctor underlying $\mathbf T$ preserves colimits, $U_a$ also creates colimits. In particular, since $\mathcal M$ is complete and cocomplete, it follows that $\mathbf{Mod}_{\mathcal M}(a)$ is complete and cocomplete and $U_a$ preserves both limits and colimits.}
\end{rem}
Let $\alpha : (a, \mu_a, \iota_a) \longrightarrow (b, \mu_b, \iota_b)$ be a morphism in $Comm(\mathcal C)$. We consider the restriction of scalars   $\alpha_{*} : \mathbf{Mod}_{\mathcal M}(b) \longrightarrow \mathbf{Mod}_{\mathcal M}(a)$, given by $(m, \rho) \mapsto (m, \rho \circ (\alpha \boxtimes 1_m))$.  The extension of scalars $\alpha^*:  \mathbf{Mod}_{\mathcal M}(a) \longrightarrow \mathbf{Mod}_{\mathcal M}(b)$ may now be defined as follows: for 
      $(m, \rho) \in \mathbf{Mod}_{\mathcal M}(a)$, we consider the object
     \begin{equation}\label{coeq}
       b \boxtimes_a m := Coeq\left(
\begin{tikzcd}
b \boxtimes (a \boxtimes m) \arrow[rrrrr, "1_b \boxtimes \rho", shift left=4] \arrow[rr, "\lambda_{b,a,m}"', shift right] &  & (b \otimes a) \boxtimes m \arrow[rrr, "(\mu_b \circ (1_b \otimes \alpha)) \boxtimes 1_m"', shift right] &  &  & b \boxtimes m
\end{tikzcd}\right)
     \end{equation} in $\mathcal M$. We  denote the canonical epimorphism $b \boxtimes m \longrightarrow b \boxtimes_a m$ by $coeq_{\alpha, (m, \rho)}$.  Using Remark $\ref{creation}$, it follows that \eqref{coeq} is also a  coequalizer in $\mathbf{Mod}_{\mathcal M}(b)$.  
\begin{lem}\label{extension}
 There is  a canonical morphism $\widetilde{\rho} : b \boxtimes (b \boxtimes_a m) \longrightarrow b \boxtimes_a m$ making $b \boxtimes_a m$ into a left $b$-module.
\end{lem}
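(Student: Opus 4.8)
The plan is to realize $b \boxtimes_a m$ as a quotient of the \emph{free} left $b$-module on $m$ and to descend the free action along the coequalizer \eqref{coeq}. Write $q := coeq_{\alpha,(m,\rho)} : b\boxtimes m \to b\boxtimes_a m$ for the canonical epimorphism, and equip $b\boxtimes m$ with the free left $b$-action
\[
\rho_0 := (\mu_b\boxtimes 1_m)\circ\lambda_{b,b,m} : b\boxtimes(b\boxtimes m)\longrightarrow b\boxtimes m.
\]
That $(b\boxtimes m,\rho_0)$ genuinely satisfies the two axioms of Definition \ref{D2.1dc} is the standard fact that the free algebra over the monad $b\boxtimes\_\_$ of Remark \ref{creation} is an algebra; it uses only the coherence of $\lambda$ together with the associativity and unitality of $\mu_b$. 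The morphism $\widetilde\rho$ I seek will then be the unique arrow characterized by $\widetilde\rho\circ(1_b\boxtimes q) = q\circ\rho_0$.

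First I would produce $\widetilde\rho$ by a universal property argument. Since $b\boxtimes\_\_$ preserves colimits by our standing assumption, applying it to \eqref{coeq} shows that $1_b\boxtimes q$ is itself the coequalizer of the pair obtained by hitting the two parallel arrows of \eqref{coeq} with $1_b\boxtimes\_\_$. Hence, to obtain $\widetilde\rho$ as the unique factorization of $q\circ\rho_0$ through $1_b\boxtimes q$, it suffices to verify that $q\circ\rho_0$ coequalizes this image pair, that is,
\[
q\circ\rho_0\circ\bigl(1_b\boxtimes(1_b\boxtimes\rho)\bigr) = q\circ\rho_0\circ\Bigl(1_b\boxtimes\bigl[((\mu_b\circ(1_b\otimes\alpha))\boxtimes 1_m)\circ\lambda_{b,a,m}\bigr]\Bigr).
\]

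The computational core, which I expect to be the main obstacle, is precisely this coequalizing identity. On the left, naturality of $\lambda$ in its last variable together with bifunctoriality of $\boxtimes$ rewrites $\rho_0\circ(1_b\boxtimes(1_b\boxtimes\rho))$ as $(\mu_b\boxtimes\rho)\circ\lambda_{b,b,a\boxtimes m}$. On the right, the associativity coherence axiom for $\lambda$ (the pentagon relating $\lambda_{b,b,-}$, $\lambda_{b,a,m}$ and the associator of $\otimes$), together with the associativity and commutativity of $\mu_b$, brings that side into a form in which the inner occurrence of $1_b\boxtimes\rho$ appears composed with $q$. At that point the defining coequalizer relation $q\circ(1_b\boxtimes\rho) = q\circ((\mu_b\circ(1_b\otimes\alpha))\boxtimes 1_m)\circ\lambda_{b,a,m}$ identifies the two sides. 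This is a purely formal diagram chase, but it is the step where all of the coherence data is actually consumed.

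Finally I would check that $(b\boxtimes_a m,\widetilde\rho)$ satisfies the two axioms of Definition \ref{D2.1dc}. The key observation is that $q$, $1_b\boxtimes q$ and $1_b\boxtimes(1_b\boxtimes q)$ are all epimorphisms, the latter two because $b\boxtimes\_\_$ and its self-composite preserve colimits. Precomposing the unit axiom with $1\boxtimes q$ and the associativity axiom with $1_b\boxtimes(1_b\boxtimes q)$, and repeatedly using $\widetilde\rho\circ(1_b\boxtimes q) = q\circ\rho_0$ together with naturality of $l$ and $\lambda$ and bifunctoriality of $\boxtimes$, both axioms for $\widetilde\rho$ reduce to the corresponding axioms for the free module $(b\boxtimes m,\rho_0)$, which hold by the remark above. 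As a cross-check, this is exactly what is encoded in the preceding observation that, by Remark \ref{creation}, \eqref{coeq} may be regarded as a coequalizer already in $\mathbf{Mod}_{\mathcal M}(b)$: the two parallel arrows are maps of free $b$-modules and $U_b$ creates the colimit, so $\widetilde\rho$ is simply the structure map of that lifted coequalizer.
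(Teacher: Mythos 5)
Your proposal is correct and follows essentially the same route as the paper: both descend the free $b$-action $\rho_0=(\mu_b\boxtimes 1_m)\circ\lambda_{b,b,m}$ along the coequalizer $q$ by checking, via naturality of $\lambda$, the pentagon coherence, functoriality of $\boxtimes$ and associativity of $\mu_b$, that $q\circ\rho_0$ coequalizes the image pair, which is exactly the content of steps (1)--(4) in the paper's three-row diagram, and your characterizing equation $\widetilde\rho\circ(1_b\boxtimes q)=q\circ\rho_0$ is the one the paper's induced map satisfies. Your closing observation — that the parallel pair in \eqref{coeq} consists of maps of free $b$-modules, so the module structure on $b\boxtimes_a m$ is created by $U_b$ via Remark \ref{creation} — is a clean way to dispatch the axiom verification the paper leaves to the reader (note only that commutativity of $\mu_b$ is not actually needed anywhere in the computation).
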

\begin{proof}
We consider the following diagram :
\begin{equation}\label{bigdiag}\small
\begin{tikzcd}
b \boxtimes (b \boxtimes (a \boxtimes m)) \arrow[rr, "{1_b \boxtimes \lambda_{b, a, m}}"', shift right] \arrow[rrrrr, "1_b \boxtimes (1_b \boxtimes \rho)", shift left=4] \arrow[dd, "{\lambda_{b, b, a \boxtimes m}}"'] &  & b \boxtimes ((b \otimes a) \boxtimes m) \arrow[rrr, "1_b \boxtimes ((\mu_b \circ (1_b \otimes \alpha)) \boxtimes 1_m)"', shift right] &                                                                                                                                 &  & b \boxtimes (b \boxtimes m) \arrow[r, "1_b \boxtimes coeq"] \arrow[dd, "{\lambda_{b, b, m}}"'] & b \boxtimes (b \boxtimes_a m) \arrow[dd, "1_{b \boxtimes (b \boxtimes_a m)}"'] \\
                                                                                                                                                                                                                       &  &                                                                                                                                  &                                                                                                                                 &  &                                                                         &                                                                                \\
(b \otimes b) \boxtimes (a \boxtimes m) \arrow[rrrrr, "1_{b \otimes b} \boxtimes \rho", shift left=3] \arrow[rr, "{\lambda_{b \otimes b, a, m}}"', shift right] \arrow[dd, "\mu_b \boxtimes 1_{a \boxtimes m}"']        &  & ((b \otimes b) \otimes a) \boxtimes m \arrow[r, "\sim"', shift right]                                                            & (b \otimes (b \otimes a)) \boxtimes m \arrow[rr, "(1_b \otimes (\mu_b \circ (1_b \otimes \alpha)))\boxtimes 1_m"', shift right] &  & (b \otimes b) \boxtimes m \arrow[r] \arrow[dd, "\mu_b \boxtimes 1_m"']  & b \boxtimes (b \boxtimes_a m) \arrow[dd, dashed]                               \\
                                                                                                                                                                                                                       &  &                                                                                                                                  &                                                                                                                                 &  &                                                                         &                                                                                \\
b \boxtimes (a \boxtimes m) \arrow[rrrrr, "1_b \boxtimes \rho", shift left=3] \arrow[rr, "{\lambda_{b, a, m}}"', shift right]                                                                                           &  & (b \otimes a) \boxtimes m \arrow[rrr, "(\mu_b \circ (1_b \otimes \alpha)) \boxtimes 1_m"', shift right]                               &                                                                                                                                 &  & b \boxtimes m \arrow[r, "coeq"]                                                 & b \boxtimes_a m                                                               
\end{tikzcd}
\end{equation}
Since $b \boxtimes \_\_$ preserves colimits, it is evident that the top row of $\eqref{bigdiag}$ is a coequalizer. 

\smallskip
(1) Because of the naturality of the isomorphisms in $\lambda$,   the associativity constraints $\lambda_{b, b, a \boxtimes m}$ and $\lambda_{b, b, m}$  satisfy $\lambda_{b, b, m}\circ (1_b \boxtimes (1_b \boxtimes \rho))=(1_{b \otimes b} \boxtimes \rho)\circ \lambda_{b, b, a \boxtimes m}$.

\smallskip
(2)  Using the pentagon axiom in $\mathcal M$ and the naturality of $\lambda$, we see that the 
    $\lambda_{b, b, a \boxtimes m}$ and $\lambda_{b, b, m}$ fit into a commutative square  
    \begin{equation*}
    \xymatrix{b \boxtimes (b \boxtimes (a \boxtimes m))\ar[rr]^{ 1_b \boxtimes \lambda_{b, a, m}} 
    \ar[d]^{\lambda_{b, b, a \boxtimes m}}&&b \boxtimes ((b \otimes a) \boxtimes m) \ar[rrrr]^{1_b \boxtimes ((\mu_b \circ (1_b \otimes \alpha)) \boxtimes 1_m)} &&& &  b \boxtimes (b \boxtimes m)\ar[d]_{\lambda_{b, b, m}}\\
    (b \otimes b) \boxtimes (a \boxtimes m) \ar[rr]_{\lambda_{b \otimes b, a, m}} &&((b \otimes b) \otimes a) \boxtimes m\ar[r]^{\sim}&(b \otimes (b \otimes a)) \boxtimes m
    \ar[rrr]_{\quad (1_b \otimes (\mu_b \circ (1_b \otimes \alpha)))\boxtimes 1_m}&& &  (b \otimes b) \boxtimes m \\
    }
    \end{equation*}

\smallskip
(3) Using the functoriality of $\boxtimes$, we see that  $(1_b \boxtimes \rho)\circ (\mu_b \boxtimes 1_{a \boxtimes m})=(\mu_b \boxtimes 1_m)\circ (1_{b \otimes b} \boxtimes \rho)$.

\smallskip
(4)  Using the naturality of $\lambda$, the pentagon axiom in $\mathcal C$, the associativity of $\mu_b$ and the functoriality of $\boxtimes$, we have the commutative diagram
 \begin{equation*}
    \xymatrix{
  (b \otimes b) \boxtimes (a \boxtimes m) \ar[d]_{\mu_b\boxtimes 1_{a\boxtimes m}} \ar[rr]^{\lambda_{b \otimes b, a, m}} &&((b \otimes b) \otimes a) \boxtimes m\ar[r]^{\sim}&(b \otimes (b \otimes a)) \boxtimes m
    \ar[rrr]^{\quad (1_b \otimes (\mu_b \circ (1_b \otimes \alpha)))\boxtimes 1_m}&& &  (b \otimes b) \boxtimes m \ar[d]^{\mu_b\boxtimes 1_m} \\
  b \boxtimes (a \boxtimes m) \ar[rr]^{\lambda_{b, a, m}} 
   && (b \otimes a) \boxtimes m\ar[rrrr]^{(\mu_b \circ (1_b \boxtimes \alpha)) \boxtimes 1_m} &&& &    (b \boxtimes m) \\
    }
    \end{equation*}

\smallskip
Using (1) and (2), it follows that the diagrams in the top and middle rows of $\ref{bigdiag}$ are naturally isomorphic. Hence, the coequalizer of the middle row is $(b \otimes b) \boxtimes m \xrightarrow{(1_b \boxtimes coeq) \circ \lambda_{b, b, n}^{-1}} b \boxtimes (b \boxtimes_a m)$.

\smallskip
Using (3) and (4), there is an induced morphism $\widetilde{\rho} : b \boxtimes (b \boxtimes_a m) \longrightarrow b \boxtimes_a m$. It may be verified that $\widetilde{\rho}$ makes $b \boxtimes_a m$ into a left $b$-module.
\end{proof}
\begin{Thm}\label{T2.2xs} Let $\alpha : (a, \mu_a, \iota_a) \longrightarrow (b, \mu_b, \iota_b)$  be a morphism in $Comm(\mathcal C)$.
    Then, the assignment $(m, \rho) \mapsto (b \boxtimes_a m, \widetilde{\rho})$ extends to a functor $\alpha^* : \mathbf{Mod}_{\mathcal M}(a) \longrightarrow \mathbf{Mod}_{\mathcal M}(b)$ which is left adjoint to the restriction of scalars $\alpha_{*} : \mathbf{Mod}_{\mathcal M}(b) \longrightarrow \mathbf{Mod}_{\mathcal M}(a)$. 
\end{Thm}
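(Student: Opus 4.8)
The plan is to first upgrade the object assignment to a functor, and then to establish the adjunction via an explicit and natural bijection of hom-sets; I will also record a shorter monadic route at the end. For functoriality, given a morphism $\psi : (m, \rho) \longrightarrow (m', \rho')$ of left $a$-modules, I would form the composite $coeq_{\alpha, (m', \rho')} \circ (1_b \boxtimes \psi) : b \boxtimes m \longrightarrow b \boxtimes_a m'$ and check, using the naturality of $\lambda$ together with the identity $\rho' \circ (1_a \boxtimes \psi) = \psi \circ \rho$, that it coequalizes the parallel pair in \eqref{coeq} defining $b \boxtimes_a m$. The universal property of the coequalizer then produces a unique $\alpha^*(\psi) : b \boxtimes_a m \longrightarrow b \boxtimes_a m'$, and the same uniqueness forces it to be $b$-linear with respect to $\widetilde{\rho}$ and to respect identities and composites, so $\alpha^*$ is a functor.

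For the adjunction, I would construct, for each $(m, \rho) \in \mathbf{Mod}_{\mathcal M}(a)$ and each $(n, \sigma) \in \mathbf{Mod}_{\mathcal M}(b)$, a bijection
\[
\mathbf{Mod}_{\mathcal M}(b)\big((b \boxtimes_a m, \widetilde{\rho}),\, (n, \sigma)\big) \;\cong\; \mathbf{Mod}_{\mathcal M}(a)\big((m, \rho),\, \alpha_*(n, \sigma)\big),
\]
natural in both arguments. In the forward direction, starting from an $a$-linear map $h : m \longrightarrow n$ (linear for the restricted action $\sigma \circ (\alpha \boxtimes 1_n)$), I would set $g := \sigma \circ (1_b \boxtimes h) : b \boxtimes m \longrightarrow n$ and verify that $g$ coequalizes the pair in \eqref{coeq}; this step consumes the associativity module axiom for $(n, \sigma)$, the $a$-linearity of $h$, and the naturality of $\lambda$. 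The coequalizer then yields a unique $f : b \boxtimes_a m \longrightarrow n$ with $f \circ coeq_{\alpha, (m, \rho)} = g$, and one checks that $f$ is $b$-linear against $\widetilde{\rho}$. In the reverse direction, from a $b$-linear $f$ I would build $h := f \circ coeq_{\alpha, (m, \rho)} \circ (\iota_b \boxtimes 1_m) \circ l_m^{-1} : m \longrightarrow n$ and verify its $a$-linearity.

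The two constructions are seen to be mutually inverse by direct computation. One round trip uses the unit axiom $\sigma \circ (\iota_b \boxtimes 1_n) = l_n$ for $(n,\sigma)$ together with the naturality of $l$ to collapse the composite back to $h$. The other round trip is the more delicate one: it must invoke the defining relation of $\widetilde{\rho}$ read off from \eqref{bigdiag}, namely $\widetilde{\rho} \circ (1_b \boxtimes coeq_{\alpha,(m,\rho)}) = coeq_{\alpha,(m,\rho)} \circ (\mu_b \boxtimes 1_m) \circ \lambda_{b,b,m}$, and then the right unit law $\mu_b \circ (1_b \otimes \iota_b) \cong 1_b$ of the monoid $b$ together with the unit coherence of the actegory relating $\lambda_{b,1,m}$ to $l_m$, in order to recover $f \circ coeq_{\alpha,(m,\rho)} = \sigma \circ (1_b \boxtimes h)$ (whence $f$ is reproduced since $coeq_{\alpha,(m,\rho)}$ is epic). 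Naturality in both variables follows from the uniqueness clauses in the universal property. I expect this last round trip, with its interlocking use of the associativity constraint, the monoid unit law, and the actegory unit coherence, to be the main obstacle and the place where coherence bookkeeping is heaviest.

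Finally, I would remark that the statement also follows conceptually from the monadic picture of Remark \ref{creation}: the morphism $\alpha$ induces a monad morphism $\alpha \boxtimes 1 : (a \boxtimes \_\_) \Longrightarrow (b \boxtimes \_\_)$ on $\mathcal M$, the restriction of scalars $\alpha_*$ is precisely restriction of Eilenberg--Moore algebras along this monad morphism, and such a restriction functor admits a left adjoint whenever the target category has the requisite reflexive coequalizers. Since $\mathbf{Mod}_{\mathcal M}(b)$ is cocomplete by Remark \ref{creation}, the left adjoint exists and is computed exactly by the coequalizer \eqref{coeq}, giving a computation-free confirmation of the adjunction $(\alpha^*, \alpha_*)$.
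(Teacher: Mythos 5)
Your proposal is correct and takes essentially the same route as the paper: the adjunction is established by the same explicit hom-set bijection, sending a $b$-linear $\phi : b\boxtimes_a m \to n$ to the composite $m \cong 1\boxtimes m \xrightarrow{\iota_b\boxtimes 1_m} b\boxtimes m \to b\boxtimes_a m \xrightarrow{\phi} n$ and, conversely, factoring $\tau\circ(1_b\boxtimes\psi)$ through the coequalizer \eqref{coeq}, with $b$-linearity of the induced map checked against the defining relation of $\widetilde{\rho}$ coming from \eqref{bigdiag}. The only organizational difference is that you verify functoriality of $\alpha^*$ by hand at the outset (and append a monadic remark), whereas the paper extracts the functor and its functoriality automatically from the object-level natural bijections via Borceux's criterion for adjoints defined on objects.
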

\begin{proof}
    Let $(m, \rho) \in \mathbf{Mod}_{\mathcal M}(a)$, $(n,\tau)\in \mathbf{Mod}_{\mathcal M}(b)$. We need to show that there is a natural isomorphism 
  \begin{equation}\label{Eqth2.5} 
    \mathbf{Mod}_{\mathcal M}(b)\left((b \boxtimes_a m, \widetilde{\rho}), (n, \tau)\right) \cong \mathbf{Mod}_{\mathcal M}(a)\left((m, \rho), \alpha_{*}((n, \tau))\right)
  \end{equation}
  Let $\phi : (b \boxtimes_a m, \widetilde{\rho}) \longrightarrow (n, \tau)$ be a morphism in $\mathbf{Mod}_{\mathcal M}(b)$. We consider  $\overline{\phi} : m \longrightarrow n$ given by the composite
    \begin{equation}
        m \cong 1 \boxtimes m \xrightarrow{\iota_b \boxtimes 1_m} b \boxtimes m \xrightarrow{\quad coeq_{\alpha, (m, \rho)}\quad} b \boxtimes_a m \xrightarrow{\quad\phi\quad} n
    \end{equation} It is easy to see that $\overline{\phi} \in \mathbf{Mod}_{\mathcal M}(a)((m, \rho), \alpha_{*}((n, \tau)))$.
    We check the naturality of the collection of maps
    \begin{equation}\label{collec} 
     \left\{\mathbf{Mod}_{\mathcal M}(b)\left((b \boxtimes_a m, \widetilde{\rho}), (n, \tau)\right) \longrightarrow \mathbf{Mod}_{\mathcal M}(a)\left((m, \rho), \alpha_{*}((n, \tau))\right),\quad \phi \mapsto \overline{\phi}\quad\big{\vert}\quad (n, \tau) \in \mathbf{Mod}_{\mathcal M}(b) \right\}
     \end{equation}Let $\xi : (n, \tau) \longrightarrow (n', \tau')$ be any morphism  in $\mathbf{Mod}_{\mathcal M}(b)$. For any $\phi \in \mathbf{Mod}_{\mathcal M}(b)\left((b \boxtimes_a m, \widetilde{\rho}), (n, \tau)\right)$, we have
     \begin{equation}
         \overline{\xi \circ \phi} = \left(\begin{tikzcd}
m \arrow[r, "\sim"] & 1 \boxtimes m \arrow[r, "\iota_b \boxtimes 1_m"] & b \boxtimes m \arrow[r, "coeq"] & b \boxtimes_a m \arrow[r, "\phi"] & n \arrow[r, "\xi"] & n'
\end{tikzcd}\right)= \xi \circ \overline{\phi}. 
     \end{equation}
     This proves the naturality of the collection $\eqref{collec}$.
     
    \smallskip
    Now let $\psi : (m, \rho) \longrightarrow \alpha_{*}((n, \tau))$ be a morphism in $\mathbf{Mod}_{\mathcal M}(a)$. We note that $\tau \circ (\alpha \boxtimes \psi) = \psi \circ \rho$. Hence,
    \begin{equation}\label{eq2.5}
      \begin{split}
          \tau \circ (1_b \boxtimes \psi) \circ (1_b \boxtimes \rho) &= 
          \tau \circ (1_b \boxtimes (\psi \circ \rho))\\
          &= \tau \circ (1_b \boxtimes (\tau \circ (\alpha \boxtimes \psi)))\\
          &= \tau \circ (1_b \boxtimes \tau) \circ (1_b \boxtimes (\alpha \boxtimes \psi))\\
          &= \tau \circ (\mu_b \boxtimes 1_n) \circ \lambda_{b, b, n} \circ (1_b \boxtimes (\alpha \boxtimes \psi))\qquad[\text{the associativity of }\tau]\\
          &= \tau \circ (\mu_b \boxtimes 1_n) \circ ((1_b \otimes \alpha) \boxtimes \psi) \circ \lambda_{b, a, m}\qquad[\text{the naturality of }\lambda]\\
          &= \tau \circ ((\mu_b \circ (1_b \otimes \alpha)) \boxtimes \psi) \circ \lambda_{b, a, m}\\
          &= \tau \circ (1_b \boxtimes \psi) \circ ((\mu_b \circ (1_b \otimes \alpha)) \boxtimes 1_m) \circ \lambda_{b, a, m}
      \end{split}
    \end{equation}
    By the universal property of the coequalizer $\eqref{coeq}$, there exists a unique morphism $\underline{\psi} : b \boxtimes_a m \longrightarrow n$ such that the triangle
\begin{equation}\label{eq2.7}
 \begin{tikzcd}
b \boxtimes m \arrow[rr, "coeq_{\alpha, (m, \rho)}"] \arrow[rd, "\tau \circ (1_b \boxtimes \psi)"'] &   & b \boxtimes_a m \arrow[ld, "\underline{\psi}", dashed] \\
                                                                                   & n &                                            
\end{tikzcd}
\end{equation}commutes. To show that $\underline\psi$ is a morphism in $ \mathbf{Mod}_{\mathcal M}(b)$, we consider the diagram
\begin{equation}\label{Eqth2.11}
\begin{tikzcd}
(b \otimes b) \boxtimes m \arrow[rrrr, "{(1_b \boxtimes coeq_{\alpha, (m, \rho)}) \circ \lambda_{b, b, m}^{-1}}"] \arrow[d, "\mu_b \boxtimes 1_m"'] &  &  &  & b \boxtimes (b \boxtimes_a m) \arrow[d, "\widetilde{\rho}"'] \arrow[rr, "1_b \boxtimes \underline{\psi}"] &  & b \boxtimes n \arrow[d, "\tau"] \\
b \boxtimes m \arrow[rrrr, "{coeq_{\alpha, (m, \rho)}}"]                                                                                            &  &  &  & b \boxtimes_a m \arrow[rr, "\underline{\psi}"]                                                            &  & n                              
\end{tikzcd}
\end{equation}It follows from the diagram $\eqref{bigdiag}$ that the left hand square in \eqref{Eqth2.11} commutes. 
 Using $\eqref{eq2.7}$ and the naturality of $\lambda$, it may be verified that
$
    \tau \circ (1_b \boxtimes \underline{\psi}) \circ (1_b \boxtimes coeq_{\alpha, (m, \rho)}) \circ \lambda_{b, b, m}^{-1} = \underline{\psi} \circ coeq_{\alpha, (m, \rho)} \circ (\mu_b \boxtimes 1_m)
$, which shows that the outer square in \eqref{Eqth2.11} commutes. We now note that the morphism 
\begin{equation} 
(1_b \boxtimes coeq_{\alpha, (m, \rho)}) \circ \lambda_{b, b, m}^{-1} : (b \otimes b) \boxtimes m \longrightarrow b \boxtimes (b \boxtimes_a m)
\end{equation} is a coequalizer and hence an epimorphism. It now follows from the diagram \eqref{Eqth2.11} that $\tau \circ (1_b \boxtimes \underline{\psi}) = \underline{\psi} \circ \widetilde{\rho}$. This shows that $\underline{\psi} : (b \boxtimes_a m, \widetilde{\rho}) \longrightarrow (n, \tau)$ is a morphism in $\mathbf{Mod}_{\mathcal M}(b)$.

\smallskip
It is straightforward to check that the two assignments $\phi \mapsto \overline{\phi}$ and $\psi \mapsto \underline{\psi}$ are mutual inverses.  
It follows by \cite[Proposition 6.7.2]{Bor2} that there is a unique functor $\alpha^* : \mathbf{Mod}_{\mathcal M}(a) \longrightarrow  \mathbf{Mod}_{\mathcal M}(b)$ which is left adjoint to $\alpha_{*}$ and such that $\alpha^*((m, \rho)) = (b \boxtimes_a m, \widetilde{\rho})$ for every $(m ,\rho) \in \mathbf{Mod}_{\mathcal M}(a)$. This completes the proof.
\end{proof}
It may be verified that for a morphism $\phi : (m, \rho) \longrightarrow (m', \rho')$  in $\mathbf{Mod}_{\mathcal M}(a)$, the induced morphism $1_b \boxtimes_a \phi := \alpha^*(\phi)$  satisfies $coeq_{\alpha, (m', \rho')}\circ (1_b \boxtimes \phi)=(1_b \boxtimes_a \phi)\circ coeq_{\alpha, (m, \rho)}$. 
 We shall denote the unit and counit of the adjunction $(\alpha^* ,\alpha_*)$ by $\eta_{\alpha}$ and $\varepsilon_{\alpha}$ respectively.
 
 \smallskip 
Let the following be  a pushout square in $Comm(\mathcal C)$. 
\begin{equation}\label{cocart} 
\begin{tikzcd}
b'                     & b \arrow[l, "\alpha'"']                   \\
a' \arrow[u, "\beta'"] & a \arrow[l, "\alpha"] \arrow[u, "\beta"']
\end{tikzcd}
\end{equation}  We recall (see for instance, \cite[$\S$ 2]{TV}) that the pushout $b'$ is given by $b \otimes_a a'$.
\begin{lem}\label{associativity}
    Let $\alpha : a \longrightarrow a', \beta : a \longrightarrow b$ and $\gamma : c \longrightarrow b$ be morphisms in $Comm(\mathcal C)$. Then,

    \smallskip
    (1) $(a' \otimes_a b) \boxtimes m \cong a' \boxtimes_a (b \boxtimes m)$ as left $a'$-modules, and this is natural in $m \in \mathcal M$.

    \smallskip
    (2) $(a' \otimes_a b) \boxtimes_c m \cong a' \boxtimes_a (b \boxtimes_c m)$ as left $a'$-modules, and this is natural in $(m, \rho) \in \mathbf{Mod}_{\mathcal M}(c)$.
       
\end{lem}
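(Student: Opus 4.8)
The plan is to prove both parts by unwinding the coequalizer \eqref{coeq} defining extension of scalars and exploiting that $\_\_\boxtimes m$ preserves colimits, together with the coherence isomorphisms $\lambda$. I will use the pushout square \eqref{cocart}, so that $a'\otimes_a b$ carries monoid morphisms $\alpha':b\longrightarrow a'\otimes_a b$ and $\beta':a'\longrightarrow a'\otimes_a b$; recall from \cite[$\S$ 2]{TV} that the underlying object of $a'\otimes_a b$ is the coequalizer in $\mathcal C$ of the pair $a'\otimes a\otimes b\rightrightarrows a'\otimes b$ given by the $a$-actions on $a'$ (through $\alpha$) and on $b$ (through $\beta$).

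For (1), I first note that $a'\boxtimes_a(b\boxtimes m)$ is by definition the extension of scalars along $\alpha$ of the free $b$-module $b\boxtimes m$ restricted to an $a$-module along $\beta$; hence it is the coequalizer \eqref{coeq} of a pair $a'\boxtimes(a\boxtimes(b\boxtimes m))\rightrightarrows a'\boxtimes(b\boxtimes m)$. Applying the natural isomorphisms $\lambda$ identifies this pair, up to isomorphism, with a pair $(a'\otimes a\otimes b)\boxtimes m\rightrightarrows(a'\otimes b)\boxtimes m$. Since $\_\_\boxtimes m$ preserves colimits, I pull it out of the coequalizer to get $a'\boxtimes_a(b\boxtimes m)\cong Q\boxtimes m$, where $Q$ is the coequalizer in $\mathcal C$ of the induced pair $a'\otimes a\otimes b\rightrightarrows a'\otimes b$. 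A direct inspection shows these two induced maps are multiplication into $a'$ through $\alpha$ and multiplication into $b$ through $\beta$, so $Q$ is exactly the underlying object of $a'\otimes_a b$, giving $a'\boxtimes_a(b\boxtimes m)\cong(a'\otimes_a b)\boxtimes m$. It then remains to check that this isomorphism respects the $a'$-module structures (the action $\widetilde\rho$ of Lemma \ref{extension} on the left against the restriction along $\beta'$ of the free $(a'\otimes_a b)$-action on the right) and is natural in $m$; both follow from the naturality and coherence of $\lambda$ and the functoriality of the constructions in $m$.

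For (2), I reduce to (1). The $b$-module $b\boxtimes_c m=\gamma^*(m,\rho)$ is the coequalizer \eqref{coeq} of free $b$-modules $b\boxtimes(c\boxtimes m)\rightrightarrows b\boxtimes m$. The functor $a'\boxtimes_a\_\_=\alpha^*\circ\beta_*$ preserves colimits: $\alpha^*$ is a left adjoint by Theorem \ref{T2.2xs}, and $\beta_*$ preserves colimits since $U_a\circ\beta_*=U_b$ with both forgetful functors creating colimits by Remark \ref{creation}. Hence $a'\boxtimes_a(b\boxtimes_c m)$ is the coequalizer of $a'\boxtimes_a(b\boxtimes(c\boxtimes m))\rightrightarrows a'\boxtimes_a(b\boxtimes m)$, and applying the natural isomorphism of (1) to each term rewrites it as the coequalizer of $(a'\otimes_a b)\boxtimes(c\boxtimes m)\rightrightarrows(a'\otimes_a b)\boxtimes m$. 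On the other hand, $(a'\otimes_a b)\boxtimes_c m$ is by definition $\delta^*(m,\rho)$ for $\delta=\alpha'\circ\gamma:c\longrightarrow a'\otimes_a b$, which is the coequalizer \eqref{coeq} of precisely these two objects. Matching the two parallel pairs and invoking the universal property of coequalizers yields the isomorphism, natural in $(m,\rho)$.

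The step I expect to be the main obstacle is this last map-matching in (2): I must verify that the isomorphism of (1) carries the bottom map of the $\gamma^*$-coequalizer — built from the multiplication of $b$ twisted by $\gamma$ — to the bottom map of the $\delta^*$-coequalizer — built from the multiplication of $a'\otimes_a b$ twisted by $\delta=\alpha'\circ\gamma$. Equivalently, the isomorphism of (1) must be compatible with the $b$-action and the $(a'\otimes_a b)$-action under the monoid map $\alpha'$. This is a diagram chase using the naturality and coherence of $\lambda$, the definition of the monoid structure on the pushout $a'\otimes_a b$, and the identification of $\alpha'$ with the canonical map $b\cong 1\otimes b\xrightarrow{\iota_{a'}\otimes 1_b}a'\otimes b\longrightarrow a'\otimes_a b$. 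The top maps, by contrast, match immediately from the naturality of (1) in the $\mathcal M$-variable applied to $\rho$.
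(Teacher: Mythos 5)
Your proposal is correct and follows essentially the same route as the paper: part (1) by pushing $\_\_\boxtimes m$ through the coequalizer presentation of the pushout $a'\otimes_a b$ and matching the parallel pairs via $\lambda$, and part (2) by observing that $a'\boxtimes_a\_\_=\alpha^*\circ\beta_*$ preserves the coequalizer defining $b\boxtimes_c m$ and then invoking part (1). The map-matching you flag as the main obstacle in (2) is exactly the verification the paper compresses into ``Using part (1), it follows that...'', so your extra care there is welcome but not a departure.
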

\begin{proof}
(1) Since $\_\_ \boxtimes m : \mathcal{C} \longrightarrow \mathcal M$ preserves colimits, we have 
\begin{equation}\label{diag2.15}\footnotesize
(a' \otimes_a b) \boxtimes m \cong Coeq\left(
\begin{tikzcd}
a' \boxtimes (a \boxtimes (b \boxtimes m)) \cong (a' \otimes a \otimes b) \boxtimes m \arrow[rrr, "(1_{a'} \otimes (\mu_b \circ (\beta \otimes 1_b))) \boxtimes 1_m", shift left=2] \arrow[rrr, "((\mu_{a'} \circ (1_{a'} \otimes \alpha)) \otimes 1_b) \boxtimes 1_m"', shift right=2] &  &  & (a' \otimes b) \boxtimes m \cong a' \boxtimes (b \boxtimes m)
\end{tikzcd}\right) \cong a' \boxtimes_a (b \boxtimes m)
\end{equation} in $\mathcal M$. Using Remark \ref{creation}, it may be verified that $\eqref{diag2.15}$ holds in $\mathbf{Mod}_{\mathcal M}(a')$.

\smallskip
(2) We consider the diagram
\begin{equation}\label{diag2.16}
\begin{tikzcd}
b \boxtimes (c \boxtimes m) \arrow[rrrrr, "1_b \boxtimes \rho", shift left=2] \arrow[rrrrr, "{((\mu_b \circ (1_b \otimes \gamma)) \boxtimes 1_m) \circ \lambda_{b, c, m}}"', shift right=2] &  &  &  &  & b \boxtimes m
\end{tikzcd}
\end{equation}in $\mathbf{Mod}_{\mathcal M}(b)$. Applying $\beta_*$ to \eqref{diag2.16}, we obtain a diagram in $\mathbf{Mod}_{\mathcal M}(a)$. Using Remark $\ref{creation}$, it follows that the coequalizer of this diagram in $\mathbf{Mod}_{\mathcal M}(a)$ is $b \boxtimes m \xrightarrow{\text{ }coeq_{\gamma, (m, \rho)}\text{ }} b \boxtimes_c m$. Since $\alpha^* : \mathbf{Mod}_{\mathcal M}(a) \longrightarrow \mathbf{Mod}_{\mathcal M}(a')$ is a left adjoint, it preserves coequalizers. Hence,
\begin{equation}
\begin{tikzcd}
a' \boxtimes_a (b \boxtimes (c \boxtimes m)) \arrow[rrrrr, "1_{a'} \boxtimes_a (1_b \boxtimes \rho)", shift left=2] \arrow[rrrrr, "{{1_{a'} \boxtimes (((\mu_b \circ (1_b \otimes \gamma)) \boxtimes 1_m) \circ \lambda_{b, c, m})}}"', shift right=2] &  &  &  &  &  a' \boxtimes_a (b \boxtimes m) \arrow[rrr, "{1_{a'} \boxtimes coeq_{\gamma, (m, \rho)}}", dashed] &  &  & a' \boxtimes_a (b \boxtimes_c m)
\end{tikzcd}
\end{equation}is a coequalizer in $\mathbf{Mod}_{\mathcal M}(a')$. Using part (1), it follows that $(a' \otimes_a b) \boxtimes_c m \cong a' \boxtimes_a (b \boxtimes_c m)$ as left $a'$-modules.
\end{proof}

We denote by $Cat$ the $2$-category of categories (see, for instance, \cite[$\S$ 7]{Bor}). The following result will be used in later sections.
\begin{thm}\label{pseudo}
(1) The assignment\begin{equation}
  \begin{array}{c}
  \mathbf{Mod}_{\mathcal M} : Comm(\mathcal C) \longrightarrow Cat\\
\left(  Comm(\mathcal C)\ni a \mapsto \mathbf{Mod}_{\mathcal M}(a)\right) \qquad \left((\alpha:a\longrightarrow b) \mapsto (\alpha^{*}:\mathbf{Mod}_{\mathcal M}(a)\longrightarrow \mathbf{Mod}_{\mathcal M}(b))\right)
  \end{array}
\end{equation}
is a pseudo-functor.  

\smallskip
(2) For every  $\alpha : a \longrightarrow b$ in $Comm(\mathcal C)$, the restriction of scalars $\alpha_{*} : \mathbf{Mod}_{\mathcal M}(b) \longrightarrow \mathbf{Mod}_{\mathcal M}(a)$ is conservative, i.e., a morphism $\phi$ in $\mathbf{Mod}_{\mathcal M}(b)$ is an isomorphism if and only if $\alpha_*(\phi)$ is an isomorphism in $\mathbf{Mod}_{\mathcal M}(a)$.

\smallskip
(3) For any pushout square $\eqref{cocart}$ in $Comm(\mathcal C)$, there is a  natural isomorphism $\alpha^* \circ \beta_* \overset{\sim}{\longrightarrow} \beta'_* \circ \alpha'^*$.  
\end{thm}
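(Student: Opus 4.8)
The plan is to dispatch the three parts in increasing order of difficulty, getting (2) straight from Remark~\ref{creation}, deducing (1) from the strict functoriality of restriction of scalars together with the adjunction of Theorem~\ref{T2.2xs}, and reducing (3) to Lemma~\ref{associativity}.

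For (2), I would use that the forgetful functor $U_b : \mathbf{Mod}_{\mathcal M}(b) \longrightarrow \mathcal M$ reflects isomorphisms (Remark~\ref{creation}). Since restriction of scalars alters only the action and not the underlying object or morphism, we have $U_a \circ \alpha_* = U_b$. Thus if $\alpha_*(\phi)$ is an isomorphism, then $U_b(\phi) = U_a(\alpha_*(\phi))$ is an isomorphism in $\mathcal M$, whence $\phi$ is an isomorphism because $U_b$ reflects them; the converse holds since functors preserve isomorphisms. Hence $\alpha_*$ is conservative.

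For (1), the first step is to record that restriction of scalars is \emph{strictly} functorial and contravariant, i.e.\ it defines a strict $2$-functor $R : Comm(\mathcal C)^{op} \longrightarrow Cat$ with $R(a) = \mathbf{Mod}_{\mathcal M}(a)$ and $R(\alpha) = \alpha_*$. Indeed $(1_a)_* = \mathrm{Id}$, and for composable $\alpha, \beta$ the identity $(\beta \boxtimes 1_m) \circ (\alpha \boxtimes 1_m) = (\beta \circ \alpha) \boxtimes 1_m$ (functoriality of $\boxtimes$) gives $(\beta \circ \alpha)_* = \alpha_* \circ \beta_*$ on the nose. By Theorem~\ref{T2.2xs} each $\alpha_*$ admits a chosen left adjoint $\alpha^*$. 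Now $\beta^* \circ \alpha^*$ is left adjoint to $\alpha_* \circ \beta_* = (\beta \circ \alpha)_*$, while $(\beta \circ \alpha)^*$ is left adjoint to the same functor; uniqueness of adjoints then furnishes a canonical natural isomorphism $c_{\beta, \alpha} : (\beta \circ \alpha)^* \xrightarrow{\sim} \beta^* \circ \alpha^*$, and likewise $\mathrm{Id} \xrightarrow{\sim} (1_a)^*$. These serve as the compositor and unitor of the putative pseudo-functor. The remaining content of (1), which I expect to be the main obstacle, is the verification of the coherence axioms (the associativity pentagon and the two unit triangles). This is formal rather than computational: for a triple of composable morphisms, both legs of the pentagon are natural isomorphisms from $((\gamma\beta)\alpha)^*$ to $\gamma^* \beta^* \alpha^*$ that are compatible with the adjunction to the common restriction functor $\alpha_* \beta_* \gamma_*$, and the uniqueness clause forces them to coincide; the unit triangles are handled the same way. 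Equivalently, one may invoke the standard principle that passing to left adjoints turns a strict $Cat$-valued functor into a pseudo-functor on the opposite category.

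Finally, for (3), I would specialise Lemma~\ref{associativity}(2) to $c = b$ and $\gamma = 1_b$. Writing $b' = a' \otimes_a b$ for the pushout in \eqref{cocart}, the left-hand side $(a' \otimes_a b) \boxtimes_b m$, viewed as a left $a'$-module via $\beta'$, is precisely $\beta'_*(\alpha'^*(m))$, whereas the right-hand side is $a' \boxtimes_a (b \boxtimes_b m) \cong a' \boxtimes_a m \cong \alpha^*(\beta_*(m))$, using the unit constraint $b \boxtimes_b m \cong m$ from (1). The naturality in $m$ asserted by the Lemma then delivers the desired natural isomorphism $\alpha^* \circ \beta_* \xrightarrow{\sim} \beta'_* \circ \alpha'^*$; the only care needed is to match the two $a'$-module structures, which is exactly what the Lemma records.
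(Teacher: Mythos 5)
Your treatment of parts (2) and (3) matches the paper's: (2) is exactly the observation that $U_a\circ\alpha_*=U_b$ together with Remark~\ref{creation}, and (3) is exactly the paper's specialisation of Lemma~\ref{associativity} to the morphisms $\alpha$, $\beta$, $1_b$, with the same bookkeeping of module structures. Where you genuinely diverge is part (1). The paper builds the compositor $(\beta\circ\alpha)^*\cong\beta^*\circ\alpha^*$ concretely, by invoking Lemma~\ref{associativity} to identify $c\boxtimes_a\_\_\cong(c\otimes_b b)\boxtimes_a\_\_\cong c\boxtimes_b(b\boxtimes_a\_\_)$, and then merely asserts that the coherence axioms "can be checked." You instead observe that restriction of scalars is a \emph{strict} functor $Comm(\mathcal C)^{op}\to Cat$ (which is correct: $(\beta\circ\alpha)_*=\alpha_*\circ\beta_*$ on the nose by functoriality of $\boxtimes$) and obtain the compositor from uniqueness of left adjoints, with coherence following formally from the uniqueness clause of that universal property. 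Both routes are valid and produce canonically isomorphic data; yours buys the pentagon and unit triangles essentially for free, which is arguably a cleaner discharge of the coherence obligation than the paper's unelaborated claim, while the paper's route is more explicit and reuses Lemma~\ref{associativity}, which is needed anyway for part (3) and for Lemma~\ref{flat}. No gaps.
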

\begin{proof}
(1) It is easy to see that for any $a \in Comm(\mathcal C)$, $(1_a)^* \cong 1_{\mathbf{Mod}_{\mathcal M}(a)}$. Now, let $a \xrightarrow{\text{ }\alpha\text{ }} b \xrightarrow{\text{ }\beta\text{ }} c$ be morphisms in $Comm(\mathcal C)$. We  consider $\beta : b \longrightarrow c, 1_b : b \longrightarrow b$ and $\alpha : a \longrightarrow b$. Using Lemma $\ref{associativity}$, it follows that 
\begin{equation}
    (\beta \circ \alpha)^* \cong c \boxtimes_a \_\_ \cong (c \otimes_b b) \boxtimes_a \_\_ \cong c \boxtimes_b (b \boxtimes_a \_\_) \cong \beta^* \circ \alpha^*
\end{equation}It can be checked that $\mathbf{Mod}_{\mathcal M}$ satisfies the coherence axioms for a pseudo-functor.

\smallskip
(2) This follows from the fact that for every $c \in \mathcal C$, the forgetful functor $\mathbf{Mod}_{\mathcal M}(c) \longrightarrow \mathcal M$ reflects isomorphisms.

\smallskip
(3)  Applying Lemma $\ref{associativity}$ to the morphisms $\alpha : a \longrightarrow a', \beta : a \longrightarrow b, 1_b : b \longrightarrow b$, it follows that
\begin{equation}
    \beta'_{*} \circ \alpha'^* = b' \boxtimes_b \_\_ = (a' \otimes_a b) \boxtimes_b \_\_ \cong a' \boxtimes_a (b \boxtimes_b \_\_) \cong a' \boxtimes_a \_\_ = \alpha^* \circ \beta_*
\end{equation} 
\end{proof}

\section{The topology on $Aff_{\mathcal C}$}
We set $Aff_{\mathcal C} := Comm(\mathcal C)^{op}$. For any object $a \in Comm(\mathcal C)$, the corresponding object in $Aff_{\mathcal C}$ will be denoted by $Spec(a)$. For any morphism $\alpha : a \longrightarrow b$ in $Comm(\mathcal C)$, the corresponding morphism in $Aff_{\mathcal C}$ will be denoted by $\alpha^{op} : Spec(b) \longrightarrow Spec(a)$. In this section, we will introduce the topologies on $Aff_{\mathcal C}$ relative to $\mathcal M$. 
\begin{defn}
    A morphism $\alpha : a \longrightarrow b$ in $Comm(\mathcal C)$ is $\mathcal M$-flat if the extension of scalars $\alpha^* : \mathbf{Mod}_{\mathcal M}(a) \longrightarrow \mathbf{Mod}_{\mathcal M}(b)$ preserves finite limits.
\end{defn}
\begin{lem}\label{flat}
$\mathcal M$-flatness is stable under pushouts and closed under compositions in $Comm(\mathcal C)$.
\end{lem}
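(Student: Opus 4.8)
The plan is to prove the two closure properties of $\mathcal M$-flatness separately, reducing each to a statement about preservation of finite limits by the relevant extension-of-scalars functors. Recall that $\alpha:a\longrightarrow b$ is $\mathcal M$-flat precisely when $\alpha^*:\mathbf{Mod}_{\mathcal M}(a)\longrightarrow\mathbf{Mod}_{\mathcal M}(b)$ preserves finite limits. Throughout I would use the pseudo-functoriality established in Theorem \ref{pseudo}(1), which supplies the natural isomorphisms $(\beta\circ\alpha)^*\cong\beta^*\circ\alpha^*$ and, for a pushout square \eqref{cocart}, the Beck--Chevalley-type isomorphism $\alpha^*\circ\beta_*\cong\beta'_*\circ\alpha'^*$ from Theorem \ref{pseudo}(3).

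For closure under composition, suppose $\alpha:a\longrightarrow b$ and $\beta:b\longrightarrow c$ are both $\mathcal M$-flat. By Theorem \ref{pseudo}(1) there is a natural isomorphism $(\beta\circ\alpha)^*\cong\beta^*\circ\alpha^*$. Since each of $\alpha^*$ and $\beta^*$ preserves finite limits by hypothesis, their composite does as well, and a functor naturally isomorphic to one preserving finite limits also preserves them. Hence $(\beta\circ\alpha)^*$ preserves finite limits, so $\beta\circ\alpha$ is $\mathcal M$-flat.

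For stability under pushouts, consider a pushout square \eqref{cocart} in $Comm(\mathcal C)$ with $\alpha:a\longrightarrow b$ being $\mathcal M$-flat; I must show the pushout leg $\alpha':b\longrightarrow b'$ is $\mathcal M$-flat, i.e. that $\alpha'^*$ preserves finite limits. The key is the natural isomorphism $\alpha'^*\circ\beta_*\cong\beta'_*\circ\alpha^*$ coming from Theorem \ref{pseudo}(3) applied to the square. Now the restriction-of-scalars functors $\beta_*$ and $\beta'_*$ both preserve and reflect finite limits: indeed, by Remark \ref{creation} each forgetful functor $U_c:\mathbf{Mod}_{\mathcal M}(c)\longrightarrow\mathcal M$ creates limits, and the restriction functors commute with these forgetful functors (they act as the identity on underlying objects of $\mathcal M$), so $\beta_*$ and $\beta'_*$ create, hence preserve and reflect, all limits. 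Given a finite diagram in $\mathbf{Mod}_{\mathcal M}(b)$, apply $\alpha'^*$ and use the isomorphism: since $\beta_*$ preserves the limit, $\alpha^*$ preserves it (as $\alpha$ is $\mathcal M$-flat), and $\beta'_*$ preserves it, the composite $\beta'_*\circ\alpha^*\cong\alpha'^*\circ\beta_*$ sends the limit cone to a limit cone; because $\beta'_*$ reflects limits, it follows that $\alpha'^*$ itself carries the limit to a limit. Hence $\alpha'^*$ preserves finite limits and $\alpha'$ is $\mathcal M$-flat.

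The main obstacle I anticipate is the pushout case, specifically justifying carefully that $\beta_*$ and $\beta'_*$ reflect limits and interact correctly with the Beck--Chevalley isomorphism so that preservation of limits by $\alpha^*$ can be transported across to $\alpha'^*$. The composition case is essentially formal once pseudo-functoriality is invoked. One should also confirm that the square \eqref{cocart} is oriented so that the appropriate legs are the ones hypothesized flat versus concluded flat; the conservativity and limit-creation properties from Remark \ref{creation} and Theorem \ref{pseudo}(2) are exactly the tools that make the reflection argument go through.
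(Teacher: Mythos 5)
Your argument is correct in substance and follows essentially the same route as the paper: composition is handled by the pseudo-functoriality isomorphism $(\beta\circ\alpha)^*\cong\beta^*\circ\alpha^*$, and pushout-stability by combining the base-change isomorphism of Theorem \ref{pseudo}(3) with the facts that restriction of scalars preserves limits and is conservative (the paper phrases the last step as ``the comparison morphism is an isomorphism since $\beta'_*$ is conservative,'' which is your ``reflects limits'' step). One caveat: with the labelling of \eqref{cocart} (where $\alpha:a\to a'$ is the given flat map and $\alpha':b\to b'$ its pushout along $\beta:a\to b$), the correct isomorphism is $\beta'_*\circ\alpha'^*\cong\alpha^*\circ\beta_*$; the version you wrote, $\alpha'^*\circ\beta_*\cong\beta'_*\circ\alpha^*$, does not typecheck, and as stated it would put $\beta'_*$ on the wrong side, so the ``$\beta'_*$ reflects limits'' step would not detach the restriction functor from $\alpha'^*$ — once the labels are straightened out the argument goes through exactly as you describe.
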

\begin{proof}  Using Proposition \ref{pseudo}, it is straightforward to check that the composition of $\mathcal M$-flat morphisms is  $\mathcal M$-flat. We  now consider the following pushout square in $Comm(\mathcal C)$  such that $\alpha$ is $\mathcal M$-flat. 
\begin{equation}\label{cocart7y} 
\begin{tikzcd}
b'     = b \otimes_a a'                & b \arrow[l, "\alpha'"']                   \\
a' \arrow[u, "\beta'"] & a \arrow[l, "\alpha"] \arrow[u, "\beta"']
\end{tikzcd}
\end{equation} We need to show that $\alpha'$ is $\mathcal M$-flat, i.e., $\alpha'^* : \mathbf{Mod}_{\mathcal M}(b) \longrightarrow \mathbf{Mod}_{\mathcal M}(b \otimes_a a')$ preserves finite limits. Let $\mathbb D : \mathcal I \longrightarrow \mathbf{Mod}_{\mathcal M}(b)$ be a diagram, where $\mathcal I$ is a finite category. There is a canonical morphism
    \begin{equation}\label{equation3.1}
         \alpha'^*\left(\underset{\mathcal I}{lim}\mathbb D\right) \longrightarrow \underset{\mathcal I}{lim} \left(\alpha'^* \circ \mathbb D\right)
    \end{equation}in $\mathbf{Mod}_{\mathcal M}(b \otimes_a a')$. Applying $\beta'_*$  to $\eqref{equation3.1}$, using Proposition $\ref{pseudo}$ and the fact that $\beta_*$ and $\beta'_*$ preserve limits, we have a morphism
    \smaller
    \begin{equation}\label{equation3.2}
       \alpha^*\left(\underset{\mathcal I}{lim} \left(\beta_* \circ \mathbb D\right)\right) \cong \alpha^*\left(\beta_*\left(\underset{\mathcal I}{lim} \mathbb D\right)\right) \cong \beta'_*\left(\alpha'^*\left(\underset{\mathcal I}{lim} D\right)\right) \longrightarrow \beta'_*\left(\underset{\mathcal I}{lim} \left(\alpha'^* \circ \mathbb D\right)\right) \cong \underset{\mathcal I}{lim} \left(\beta'_* \circ \alpha'^* \circ \mathbb D\right) \cong \underset{\mathcal I}{lim} \left(\alpha^* \circ (\beta_* \circ \mathbb D)\right)
    \end{equation}
    \normalsize
    Since $\alpha$ is $\mathcal M$-flat, the morphism in $\eqref{equation3.2}$ must be an isomorphism. Further since $\beta'_*$ is conservative, it follows that the morphism in $\eqref{equation3.1}$ is an isomorphism. Hence, $\alpha'$ is $\mathcal M$-flat.
\end{proof}
For an object $a \in Comm(\mathcal C)$, we consider the coslice category $a/Comm(\mathcal C)$, or the category of ``$a$-algebras.''  We recall that objects of $a/Comm(\mathcal C)$ are the morphisms in $Comm(\mathcal C)$ with domain $a$.  Let $\alpha : a \longrightarrow b$ be a morphism in $Comm(\mathcal C)$. We also recall (see \cite[$\S$ 2.3]{TV}) that $\alpha$ is of finite type if for any filtered diagram $\mathbb D : \mathcal I \longrightarrow a/Comm(\mathcal C)$, the canonical map
\begin{equation}
    \underset{i \in \mathcal I}{colim}\text{ }a/Comm(\mathcal C)(b, \mathbb D(i)) \longrightarrow a/Comm(\mathcal C)(b, \underset{i \in \mathcal I}{colim}\text{ }\mathbb D(i))
\end{equation}is an isomorphism. 
\begin{defn}\label{fpqcM9}
    A family of morphisms $\left\{\alpha_i^{op} : Spec(a_i) \longrightarrow Spec(a)\right\}_{i \in I}$ in $Aff_{\mathcal C}$ is an fpqc $\mathcal M$-cover for $Spec(a)$ if

    \smallskip
    (1) For each $i \in I$, the morphism $\alpha_i : a \longrightarrow a_i$ is $\mathcal M$-flat.

    \smallskip
    (2) There exists a finite subset $J \subseteq I$ such that the collection of functors
    \begin{equation}\label{equat3.4}
      \left\{  \alpha_j^{*}  = (a_j \boxtimes_a \_\_) : \mathbf{Mod}_{\mathcal M}(a) \longrightarrow  \mathbf{Mod}_{\mathcal M}(a_j)\right\}_{j\in J}
    \end{equation}is conservative, i.e., a morphism $\phi$ in $\mathbf{Mod}_{\mathcal M}(a)$ is an isomorphism if and   only if $\alpha_j^*(\phi)$ is an isomorphism in $\mathbf{Mod}_{\mathcal M}(a_j)$ for each $j\in J$.
\end{defn}
\begin{defn}\label{Zariskiopen}
 A morphism $\alpha^{op} : Spec(b) \longrightarrow Spec(a)$ in $Aff_{\mathcal C}$ is a spectral immersion relative to $\mathcal M$ if $\alpha$ is $\mathcal M$-flat and an epimorphism of finite type in $Comm(\mathcal C)$.
\end{defn}
\begin{lem}\label{stabilityspec}
    Spectral immersions relative to $\mathcal M$ are stable under pullbacks and closed under compositions in $Aff_{\mathcal C}$.
\end{lem}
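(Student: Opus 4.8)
The plan is to unwind the definition and reduce the lemma to three separate stability statements. By Definition~\ref{Zariskiopen}, a morphism $\alpha^{op}:Spec(b)\longrightarrow Spec(a)$ is a spectral immersion relative to $\mathcal M$ precisely when the underlying morphism $\alpha:a\longrightarrow b$ in $Comm(\mathcal C)$ is simultaneously (i) $\mathcal M$-flat, (ii) an epimorphism, and (iii) of finite type. Since $Aff_{\mathcal C}=Comm(\mathcal C)^{op}$, a pullback square in $Aff_{\mathcal C}$ is exactly a pushout square in $Comm(\mathcal C)$, and composition in $Aff_{\mathcal C}$ is composition in $Comm(\mathcal C)$ read in the opposite order. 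Thus it suffices to check that each of the three properties (i)--(iii) is stable under pushouts and closed under compositions in $Comm(\mathcal C)$; conjoining the three statements then yields both assertions of the lemma.

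Property (i) is already handled, since Lemma~\ref{flat} states precisely that $\mathcal M$-flatness is stable under pushouts and closed under compositions. For property (ii), closure under composition is immediate, as a composite of epimorphisms is an epimorphism. For stability under pushouts I would argue directly from the universal property: given a pushout square with an epimorphism $\alpha$ and its pushout $\alpha'$ along a morphism $\beta$, any pair of maps out of the pushout that agree after precomposition with $\alpha'$ also agree after precomposition with $\beta'$, using the commutativity $\alpha'\circ\beta=\beta'\circ\alpha$ together with the fact that $\alpha$ is epi; by the uniqueness clause in the pushout property the two maps then coincide, so $\alpha'$ is an epimorphism. For property (iii), both the stability under pushout (base change) and the closure under composition of finite-type morphisms are recorded in To\"{e}n--Vaqui\'{e} \cite[$\S$ 2.3]{TV}; alternatively one can extract them from the defining filtered-colimit criterion applied in the coslice categories $a/Comm(\mathcal C)$.

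Putting these together gives the lemma. For the pullback assertion, if $\alpha$ is $\mathcal M$-flat, an epimorphism and of finite type, then its pushout along any morphism is again $\mathcal M$-flat (Lemma~\ref{flat}), an epimorphism (by the argument above), and of finite type (by \cite{TV}), hence its image under $Spec$ is once more a spectral immersion; the composition assertion follows identically from closure of all three properties under composition. I expect the genuine content to lie entirely in property (iii): the $\mathcal M$-flat case is already done and the epimorphism case is elementary, whereas proving directly that finite type is stable under base change and closed under composition requires some care in commuting filtered colimits past the relevant hom-functors in the coslice categories, which is why invoking \cite{TV} is the cleanest route.
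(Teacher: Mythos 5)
Your proposal is correct and follows essentially the same route as the paper: reduce to showing that each of the three defining properties ($\mathcal M$-flatness, epimorphism, finite type) is stable under pushouts and closed under composition in $Comm(\mathcal C)$, with Lemma~\ref{flat} handling flatness and the epimorphism case being elementary. You rightly identify finite-type base change as the only point with real content; the paper proves it exactly along the lines you sketch as the ``alternative'' route, by showing that the restriction functor $r_\alpha : a'/Comm(\mathcal C) \longrightarrow a/Comm(\mathcal C)$ preserves filtered colimits and admits the base-change functor $p_\alpha$ as a left adjoint, rather than citing \cite{TV} for the statement.
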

\begin{proof}
    We need to show that $\mathcal M$-flat epimorphisms of finite type are stable under pushouts and compositions in $Comm(\mathcal C)$. We consider a pushout square in $Comm(\mathcal C)$ such that $\beta : a \longrightarrow b$ is of finite type. 
\begin{equation}\label{cocart7yz} 
\begin{tikzcd}
b'     = b \otimes_a a'                & b \arrow[l, "\alpha'"']                   \\
a' \arrow[u, "\beta'"] & a \arrow[l, "\alpha"] \arrow[u, "\beta"']
\end{tikzcd}
\end{equation} 
    We claim that $\beta' : a' \longrightarrow b' =b \otimes_a c$ is of finite type. Let $\mathbb D : \mathcal I \longrightarrow a'/Comm(\mathcal C)$ be a filtered diagram. We consider the functor $r_{\alpha} : a'/Comm(\mathcal C) \longrightarrow a/Comm(\mathcal C)$ that takes $ (a' \longrightarrow c) \mapsto (a \xrightarrow{\text{ }\alpha\text{ }} a' \longrightarrow c)$. We note that the forgetful functors $\pi_a : a/Comm(\mathcal C) \longrightarrow Comm(\mathcal C)$ and $\pi_{a'} : a'/Comm(\mathcal C) \longrightarrow Comm(\mathcal C)$ create filtered colimits and are conservative. Since $\pi_a \circ r_{\alpha} = \pi_{a'}$, it follows that $r_{\alpha}$ preserves filtered colimits. Further, $r_{\alpha}$ has a left adjoint
    \begin{equation}
      \begin{split}
       p_{\alpha} : a/Comm(\mathcal C) &\longrightarrow a'/Comm(\mathcal C)\\
                 (a \longrightarrow c) &\mapsto (a' \longrightarrow a' \otimes_{a} c) 
       \end{split}
    \end{equation}  Since $\beta$ is of finite type and $r_\alpha$ preserves filtered colimits, we have
    \begin{equation}
        \begin{split}
            \underset{i \in \mathcal I}{colim}\text{ }a'/Comm(\mathcal C)(a' \otimes_a b, \mathbb D(i)) &\cong \underset{i \in \mathcal I}{colim}\text{ }a/Comm(\mathcal C)(b, r_{\alpha}(\mathbb D(i)))\\
            &\cong a/Comm(\mathcal C)(b, \underset{i \in \mathcal I}{colim}\text{ }r_{\alpha}(\mathbb D(i)))\\
            &\cong a/Comm(\mathcal C)(b, r_{\alpha}(\underset{i \in \mathcal I}{colim}\text{ }\mathbb D(i))) \cong a'/Comm(\mathcal C)(a' \otimes_a b, \underset{i \in \mathcal I}{colim}\text{ }\mathbb D(i))
        \end{split}
    \end{equation}This shows that $\beta' : a' \longrightarrow a' \otimes_a b$ is of finite type. It may be verified that a composition of morphisms of finite type is a morphism of finite type. Using Lemma \ref{flat}, it follows that $\mathcal M$-flat epimorphisms of finite type are stable under pushouts and closed under compositions in $Comm(\mathcal C)$.
\end{proof}

\begin{defn}
    A family of morphisms $\left\{\alpha_i^{op} : Spec(a_i) \longrightarrow Spec(a)\right\}_{i \in I}$ in $Aff_{\mathcal C}$ is a spectral $\mathcal M$-cover for $Spec(a)$ if it is an fpqc $\mathcal M$-cover and for each $i \in I$, $\alpha_i^{op}$ is a spectral immersion relative to $\mathcal M$.
\end{defn}

\begin{thm}\label{checkfortopology}
    (1) The assignment
$Aff_{\mathcal C} \ni Spec(a) \mapsto \left\{\text{fpqc }\mathcal M\text{-covers for }Spec(a)\right\}$ is a basis for a Grothendieck topology on $Aff_{\mathcal C}$.

\smallskip
(2) The assignment $Aff_{\mathcal C} \ni Spec(a) \mapsto \left\{\text{spectral }\mathcal M\text{-covers for }Spec(a)\right\}$ is a basis for a Grothendieck topology on $Aff_{\mathcal C}$.
\end{thm}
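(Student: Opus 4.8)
The plan is to verify, for each of the two candidate systems of covering families, the three axioms of a basis for a Grothendieck topology (a pretopology): that every isomorphism is a covering family, that covering families are stable under base change, and that they satisfy local character (closure under composition). Here base change in $Aff_{\mathcal C}$ is pushout in $Comm(\mathcal C)$, and these pushouts exist since $\mathcal C$ is cocomplete, so the requisite fibre products exist. I would establish (1) in full and then deduce (2) from it together with Lemma \ref{stabilityspec}. For (1), the identity axiom is immediate: if $\alpha : a \longrightarrow b$ is an isomorphism in $Comm(\mathcal C)$, then $\alpha^* = b \boxtimes_a \_\_$ is an equivalence of categories, hence preserves finite limits (so $\alpha$ is $\mathcal M$-flat) and reflects isomorphisms (so the singleton $\{\alpha^*\}$ is conservative); thus $\{\alpha^{op}\}$ is an fpqc $\mathcal M$-cover.

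For transitivity in (1), suppose $\{\alpha_i : a \longrightarrow a_i\}_{i\in I}$ is an fpqc $\mathcal M$-cover and each $\{\beta_{ik}: a_i \longrightarrow a_{ik}\}_{k\in K_i}$ is an fpqc $\mathcal M$-cover; the composite family in $Aff_{\mathcal C}$ corresponds to $\{\beta_{ik}\circ\alpha_i : a \longrightarrow a_{ik}\}$. Each composite is $\mathcal M$-flat by Lemma \ref{flat}. For conservativity, choose the finite $J\subseteq I$ witnessing the first cover and, for each $j\in J$, a finite $L_j\subseteq K_j$ witnessing the cover of $Spec(a_j)$; the finite index set $\{(j,l): j\in J,\ l\in L_j\}$ works. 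Indeed, using $(\beta_{jl}\circ\alpha_j)^* \cong \beta_{jl}^*\circ\alpha_j^*$ from Proposition \ref{pseudo}(1): if every $\beta_{jl}^*(\alpha_j^*(\phi))$ is an isomorphism, then conservativity of $\{\beta_{jl}^*\}_{l\in L_j}$ forces each $\alpha_j^*(\phi)$ to be an isomorphism, and conservativity of $\{\alpha_j^*\}_{j\in J}$ then forces $\phi$ to be an isomorphism.

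The crux, which I expect to be the main obstacle, is base-change stability in (1). Given an fpqc $\mathcal M$-cover $\{\alpha_i : a \longrightarrow a_i\}$ and an arbitrary $\gamma : a \longrightarrow a'$, form the pushouts $b_i = a_i \otimes_a a'$ with structure maps $\gamma_i' : a' \longrightarrow b_i$ and $\alpha_i' : a_i \longrightarrow b_i$; these realise the pullbacks $Spec(b_i) \longrightarrow Spec(a')$ of $\alpha_i^{op}$ along $\gamma^{op}$. Each $\gamma_i'$ is $\mathcal M$-flat by Lemma \ref{flat}. For conservativity, retain the same finite $J$ and suppose $\phi$ in $\mathbf{Mod}_{\mathcal M}(a')$ has $(\gamma_j')^*(\phi)$ an isomorphism for all $j \in J$. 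Reading the pushout square symmetrically, with $\alpha_j$ along the bottom and $\gamma$ along the right, Proposition \ref{pseudo}(3) yields a natural isomorphism $\alpha_j^* \circ \gamma_* \cong (\alpha_j')_* \circ (\gamma_j')^*$. Since $(\gamma_j')^*(\phi)$ is an isomorphism and $(\alpha_j')_*$ preserves isomorphisms, $\alpha_j^*(\gamma_*(\phi))$ is an isomorphism for every $j\in J$; conservativity of $\{\alpha_j^*\}_{j\in J}$ gives that $\gamma_*(\phi)$ is an isomorphism, and finally conservativity of the restriction $\gamma_*$, by Proposition \ref{pseudo}(2), gives that $\phi$ is an isomorphism. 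The delicate point is precisely to route the argument through $\gamma_*(\phi)$ and the base-change isomorphism of Proposition \ref{pseudo}(3), rather than attempting a direct comparison, since $\phi$ need not lie in the image of $\gamma^*$.

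Finally, (2) follows by combining these arguments with Lemma \ref{stabilityspec}. Every isomorphism is an $\mathcal M$-flat epimorphism of finite type: it is $\mathcal M$-flat as above, trivially an epimorphism, and of finite type because, viewed in $a/Comm(\mathcal C)$, it is isomorphic to the initial object $(a,1_a)$, so that all the relevant Hom-sets are singletons and the defining colimit comparison map is a bijection. Hence the identity axiom holds for spectral $\mathcal M$-covers. For base change and composition, the spectral-immersion property of each member of the new family is supplied by the stability-under-pullback and closure-under-composition clauses of Lemma \ref{stabilityspec}, while the fpqc condition on the family is exactly what part (1) provides. Therefore spectral $\mathcal M$-covers also form a basis for a Grothendieck topology on $Aff_{\mathcal C}$.
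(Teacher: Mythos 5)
Your proposal is correct and follows essentially the same route as the paper: the identity and transitivity axioms via Lemma \ref{flat} and the pseudo-functoriality of $\mathbf{Mod}_{\mathcal M}$, and base-change conservativity by post-composing the pulled-back extension functors with the restrictions $(\alpha_j')_*$, invoking the base-change isomorphism of Proposition \ref{pseudo}(3) and the conservativity of $\gamma_*$ from Proposition \ref{pseudo}(2) — exactly the paper's argument phrased on morphisms rather than on functors. Part (2) is likewise deduced from part (1) together with Lemma \ref{stabilityspec}, with your explicit check that isomorphisms are of finite type being a detail the paper leaves implicit.
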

\begin{proof}
    (1) If $\alpha^{op} : Spec(b) \longrightarrow Spec(a)$ is an isomorphism, it is clear that the singleton $\{\alpha^{op}\}$ is an fpqc $\mathcal M$-cover. Let $\{\alpha_i^{op} : Spec(a_i) \longrightarrow Spec(a)\}_{i \in I}$ be an fpqc $\mathcal M$-cover and $\beta^{op} : Spec(b) \longrightarrow Spec(a)$ be a morphism in $Aff_{\mathcal C}$. We consider the pushout square
\begin{equation}
    \begin{tikzcd}
a_i \otimes_a b        & b \arrow[l, "\alpha_i'"']                   \\
a_i \arrow[u, "\beta_i'"] & a \arrow[l, "\alpha_i"] \arrow[u, "\beta"']
\end{tikzcd}
\end{equation}in $Comm(\mathcal C)$ for each $i\in I$. We claim that the family of pullbacks $\{(\alpha_i')^{op} : Spec(a_i) \times_{Spec(a)} Spec(b) \cong Spec(a_i \otimes_a b) \longrightarrow Spec(b)\}_{i \in I}$ is an fpqc $\mathcal M$-cover for $Spec(b)$. Using Lemma $\ref{flat}$, it follows that each $\alpha_i' : b \longrightarrow a_i \otimes_a b$ is $\mathcal M$-flat. Further, there exists a finite subset $J \subseteq I$ such that the following collection of functors  is conservative. 
  \begin{equation}\label{equat3.4z}
      \left\{  \alpha_j^{*}  = (a_j \boxtimes_a \_\_) : \mathbf{Mod}_{\mathcal M}(a) \longrightarrow  \mathbf{Mod}_{\mathcal M}(a_j)\right\}_{j\in J}
    \end{equation}
We claim that the functor 
        $(\alpha_j'^*)_{j \in J} = ((a_j \otimes_a b) \boxtimes_b \_\_)_{j \in J} : \mathbf{Mod}_{\mathcal M}(b) \longrightarrow \prod_{j \in J} \mathbf{Mod}_{\mathcal M}(a_j \otimes_a b)$ is conservative. It suffices to show that the composite functor $({\beta_j}'_* \circ {\alpha_j'}^*)_{j \in J}$
    \begin{equation}\label{equat3.7}
        \mathbf{Mod}_{\mathcal M}(b) \xrightarrow{(\alpha_j'^*)_{j \in J}} \prod_{j \in J} \mathbf{Mod}_{\mathcal M}(a_j \otimes_a b) \xrightarrow{\prod_{j \in J} {\beta'_j}_*} \prod_{j \in J} \mathbf{Mod}_{\mathcal M}(a_j)
    \end{equation}is conservative. Using Proposition $\ref{pseudo}$, we have $({\beta_j'}_* \circ \alpha_j'^*)_{j\in J} \cong (\alpha_j^* \circ \beta_*)_{j\in J}= (\alpha_j^*)_{j \in J} \circ \beta_*$. Since $\beta_*$ is conservative, it now follows that the functor in $\eqref{equat3.7}$ is conservative.

    \smallskip
    Using Proposition \ref{pseudo} and Lemma \ref{flat}, it may be verified that composition of fpqc $\mathcal M$-covers must be an fpqc $\mathcal M$-cover. This completes the proof.

    \smallskip
    (2) The result follows from part (1) and Lemma \ref{stabilityspec}.
\end{proof}
We will refer to the   topology on $Aff_{\mathcal C}$ coming from  part (1) of Propostion $\ref{checkfortopology}$ as the fpqc $\mathcal M$-topology and denote it by $fpqc_{\mathcal M}$. The associated topology on $Aff_{\mathcal C}$ in part (2) of Propostion $\ref{checkfortopology}$ will be called the spectral $\mathcal M$-topology and  denoted by $spc_{\mathcal M}$.

\smallskip
We denote the category of presheaves of sets on $Aff_{\mathcal C}$ by $PSh(Aff_{\mathcal C})$ and let $Sh(Aff_{\mathcal C}, fpqc_{\mathcal M})$ be the full subcategory of $PSh(Aff_{\mathcal C})$ whose objects are the sheaves with respect to the   fpqc $\mathcal M$-topology. The category of sheaves on $Aff_{\mathcal C}$ with respect to the spectral 
$\mathcal M$-topology will be denoted by  $Sh(Aff_{\mathcal C})_{\mathcal M}$. We note that we have full subcategories
\begin{equation}
    Sh(Aff_{\mathcal C}, fpqc_{\mathcal M}) \subseteq Sh(Aff_{\mathcal C})_{\mathcal M} \subseteq PSh(Aff_{\mathcal C})
\end{equation} We will often use the term 
``affine scheme'' to mean either an object $Spec(a) \in Aff_{\mathcal C}$ or the corresponding presheaf on $Aff_{\mathcal C}$. Similarly, we will use 
$\alpha^{op}:Spec(a)\longrightarrow Spec(a')$ to denote both a morphism in $Aff_{\mathcal C}$ and the corresponding morphism in $PSh(Aff_{\mathcal C})$.

\medskip
We recall (see \cite[Corollary 2.11]{TV}) that for the case $(\mathcal M, \boxtimes) = (\mathcal C, \otimes)$, the spectral $\mathcal M$-topology on $Aff_{\mathcal C}$ is subcanonical, i.e. every representable presheaf $Aff_{\mathcal C}(\_\_, Spec(a))$ is a sheaf. We now give an example to show that for a general $\mathcal M$, the spectral $\mathcal M$-topology on $Aff_{\mathcal C}$ need not be subcanonical.
\begin{examples}\label{exampleactegorytop}
\emph{We consider the closed symmetric monoidal category $(\mathbf{Ab}, \otimes_{\mathbb Z}, \mathbb Z)$. Let $p, q$ be distinct primes in $\mathbb Z$ and let $\mathbb Z \xrightarrow{\text{ }\alpha\text{ }}\mathbb Z[p^{-1}]$ (resp. $\mathbb Z \xrightarrow{\text{ }\beta\text{ }}\mathbb Z[q^{-1}]$) denote the localization of $\mathbb Z$ at $\{1, p, p^2, ...\}$ (resp. $\{1, q, q^2, ...\}$). It is standard that the 
family
\begin{equation} 
       \left\{Spec(\mathbb Z[p^{-1}]) \xrightarrow{\text{ }\alpha^{op}\text{ }} Spec(\mathbb Z),\text{ }Spec(\mathbb Z[q^{-1}]) \xrightarrow{\text{ }\beta^{op}\text{ }} Spec(\mathbb Z)\right\}
\end{equation} is a cover for $Spec(\mathbb Z)$ in the Zariski topology on $Aff_{\mathbf{Ab}}$ (in the sense of \cite{TV}).}

\smallskip
\emph{We now consider the category $mod_{\mathbb Z/p\mathbb Z}$ of $(\mathbb Z/p\mathbb Z)$-modules. It is both complete and cocomplete. The bifunctor $\boxtimes : \mathbf{Ab} \times mod_{\mathbb Z/p\mathbb Z} \longrightarrow mod_{\mathbb Z/p\mathbb Z}$ given by the composition}
    \begin{equation}\label{eqtyiop3.35}
    \begin{split}
        \mathbf{Ab} \times mod_{\mathbb Z/p\mathbb Z} \xrightarrow{\text{ }\left(\_\_ \otimes_{\mathbb Z} (\mathbb Z/p\mathbb Z)\right) \times id\text{ }} mod_{\mathbb Z/p\mathbb Z} \times& mod_{\mathbb Z/p\mathbb Z} \xrightarrow{\text{ }\otimes_{\mathbb Z/p\mathbb Z}\text{ }} mod_{\mathbb Z/p\mathbb Z}\\
        (G, M) \quad\qquad\longmapsto \qquad(G \otimes_{\mathbb Z}\mathbb (\mathbb Z/p\mathbb Z),& M) \quad\quad\qquad\longmapsto \quad(G \otimes_{\mathbb Z}(\mathbb Z/p\mathbb Z)) \otimes_{\mathbb Z/p\mathbb Z} M \cong G \otimes_{\mathbb Z} M
    \end{split}
    \end{equation}\emph{determines a left $(\mathbf{Ab}, \otimes_{\mathbb Z}, \mathbb Z)$-actegory structure on $mod_{\mathbb Z/p\mathbb Z}$. Since the extension of scalars $\_\_ \otimes_{\mathbb Z} (\mathbb Z/p\mathbb Z) : \mathbf{Ab} \longrightarrow mod_{\mathbb Z/p\mathbb Z}$ along the unique ring map $\mathbb Z \longrightarrow \mathbb Z/p\mathbb Z$ is a left adjoint and the monoidal category ($mod_{\mathbb Z/p\mathbb Z}, \otimes_{\mathbb Z/p\mathbb Z}, \mathbb Z/p\mathbb Z$) is closed, it follows that the bifunctor \eqref{eqtyiop3.35} preserves colimits in both variables. Hence, we may consider the spectral $mod_{\mathbb Z/p\mathbb Z}$-topology on $Aff_{\mathbf{Ab}}$.}

\smallskip

\smallskip
\emph{We consider again, the localizations}
\begin{equation}\label{localizationsatprimepowers}\mathbb Z
\xrightarrow{\text{ }\alpha\text{ }} \mathbb Z[p^{-1}],\quad \mathbb Z
\xrightarrow{\text{ }\beta\text{ }} \mathbb Z[q^{-1}]
\end{equation}
\emph{We make the following observations :}

\emph{(i) The category $\mathbf{Mod}_{mod_{\mathbb Z/p\mathbb Z}}(\mathbb Z[p^{-1}])$ is the terminal category (with exactly one object and one arrow). To see this, we note that an object of $\mathbf{Mod}_{mod_{\mathbb Z/p\mathbb Z}}(\mathbb Z[p^{-1}])$ is a $(\mathbb Z/p\mathbb Z)$-module $M$ together with a $(\mathbb Z/p\mathbb Z)$-linear map}
\begin{equation*}
    \mathbb Z[p^{-1}] \boxtimes M \cong \mathbb Z[p^{-1}] \otimes_{\mathbb Z} M \xrightarrow{\text{ }\rho\text{ }} M
\end{equation*}
\emph{such that \eqref{newnameoldstuff} holds. In particular, the map $\rho$ is a split epimorphism in $mod_{\mathbb Z/p\mathbb Z}$ and hence a surjection. Further, since $M$ is a $(\mathbb Z/p\mathbb Z)$-module,  $\mathbb Z[p^{-1}] \otimes_{\mathbb Z} M$ is the zero module. It follows that $M$ is the zero module.} \emph{Hence, $\mathbf{Mod}_{mod_{\mathbb Z/p\mathbb Z}}(\mathbb Z[p^{-1}])$ is the terminal category.} 

\smallskip
\emph{(ii) Since $\mathbb Z$ is the unit object of $(\mathbf{Ab}, \otimes_{\mathbb Z}, \mathbb Z)$, the restriction of scalars functors along \eqref{localizationsatprimepowers}
\begin{equation*}
\begin{split}
\alpha_* : \mathbf{Mod}_{mod_{\mathbb Z/p\mathbb Z}}(\mathbb Z[p^{-1}]) &\longrightarrow \mathbf{Mod}_{mod_{\mathbb Z/p\mathbb Z}}(\mathbb Z) \cong mod_{\mathbb Z/p\mathbb Z}\\
\beta_* : \mathbf{Mod}_{mod_{\mathbb Z/p\mathbb Z}}(\mathbb Z[q^{-1}]) &\longrightarrow \mathbf{Mod}_{mod_{\mathbb Z/p\mathbb Z}}(\mathbb Z) \cong mod_{\mathbb Z/p\mathbb Z}
\end{split}
\end{equation*}are naturally isomorphic to forgetful functors, as in Remark \ref{creation}. It follows from the uniqueness of left adjoints that the corresponding extension of scalars functors are given by}
\begin{equation*}
\begin{split}
    mod_{\mathbb Z/p\mathbb Z} \cong \mathbf{Mod}_{mod_{\mathbb Z/p\mathbb Z}}(\mathbb Z) &\xrightarrow{\text{ }\alpha^*\text{ }} \mathbf{Mod}_{mod_{\mathbb Z/p\mathbb Z}}(\mathbb Z[p^{-1}]),\text{ }M \mapsto \mathbb Z[p^{-1}] \boxtimes_{\mathbb Z} M \cong \mathbb Z[p^{-1}] \otimes_{\mathbb Z} M\\
    mod_{\mathbb Z/p\mathbb Z} \cong \mathbf{Mod}_{mod_{\mathbb Z/p\mathbb Z}}(\mathbb Z) &\xrightarrow{\text{ }\beta^*\text{ }} \mathbf{Mod}_{mod_{\mathbb Z/p\mathbb Z}}(\mathbb Z[q^{-1}]),\text{ }M \mapsto \mathbb Z[q^{-1}] \boxtimes_{\mathbb Z} M \cong \mathbb Z[q^{-1}] \otimes_{\mathbb Z} M
\end{split}
\end{equation*}\emph{Using observation (ii), we note that the following diagram}\begin{equation}\label{fhy3.42}
\begin{tikzcd}
	{\mathbf{Mod}_{mod_{\mathbb Z/p\mathbb Z}}(\mathbb Z)} && {\mathbf{Mod}_{mod_{\mathbb Z/p\mathbb Z}}(\mathbb Z[p^{-1}]) \times \mathbf{Mod}_{mod_{\mathbb Z/p\mathbb Z}}(\mathbb Z[q^{-1}])} \\
	{mod_{\mathbb Z/p\mathbb Z}} && {mod_{\mathbb Z/p\mathbb Z} \times mod_{\mathbb Z/p\mathbb Z}} \\
	{\mathbf{Ab}} && {\mathbf{Ab} \times \mathbf{Ab}}
	\arrow["{(\alpha^*, \beta^*)}", from=1-1, to=1-3]
	\arrow["\sim"', from=1-1, to=2-1]
	\arrow["{\emph{forgetful}\text{ }\times\text{ }\emph{forgetful}}", from=1-3, to=2-3]
	\arrow["{\emph{forgetful}}"', from=2-1, to=3-1]
	\arrow["{\emph{forgetful}\text{ }\times\text{ }\emph{forgetful}}", from=2-3, to=3-3]
	\arrow["{(\mathbb Z[p^{-1}] \otimes_{\mathbb Z} \_\_,\text{ }\mathbb Z[q^{-1}] \otimes_{\mathbb Z} \_\_)}"', from=3-1, to=3-3]
\end{tikzcd}
\end{equation}\emph{commutes upto a natural isomorphism. It is clear that all the functors in diagram \eqref{fhy3.42} different from}
\begin{equation}\label{pairfunctorusedhere3.32}
    (\alpha^*, \beta^*) : \mathbf{Mod}_{mod_{\mathbb Z/p\mathbb Z}}(\mathbb Z) \longrightarrow \mathbf{Mod}_{mod_{\mathbb Z/p\mathbb Z}}(\mathbb Z[p^{-1}]) \times \mathbf{Mod}_{mod_{\mathbb Z/p\mathbb Z}}(\mathbb Z[q^{-1}])
\end{equation}\emph{are conservative. It follows that the functor \eqref{pairfunctorusedhere3.32} is also conservative. Since $\mathbf{Mod}_{mod_{\mathbb Z/p\mathbb Z}}(\mathbb Z[p^{-1}])$ is the terminal category, it follows that $\beta^*$ is conservative.}

\smallskip
\emph{Moreover, diagram \eqref{fhy3.42} reduces to the following commutative diagram}
\begin{equation}\label{userandomdiagrum3.33}
\begin{tikzcd}
\mathbf{Mod}_{mod_{\mathbb Z/p\mathbb Z}}(\mathbb Z) \arrow[d, "\beta^*"'] \arrow[rr, "\sim"] &  & mod_{\mathbb Z/p\mathbb Z} \arrow[rr, "\emph{forgetful}"] &  & \mathbf{Ab} \arrow[d, "{\mathbb Z[q^{-1}] \otimes_{\mathbb Z} \_\_}"] \\
\mathbf{Mod}_{mod_{\mathbb Z/p\mathbb Z}}(\mathbb Z[q^{-1}]) \arrow[rr, "\emph{forgetful}"]                                                          &  & mod_{\mathbb Z/p\mathbb Z} \arrow[rr, "\emph{forgetful}"] &  & \mathbf{Ab}                                                    
\end{tikzcd}
\end{equation}\emph{Since $\mathbb Z[q^{-1}]$ is a flat $\mathbb Z$-module (in the ordinary sense), the functor $\mathbb Z[q^{-1}] \otimes_{\mathbb Z} \_\_ : \mathbf{Ab} \longrightarrow \mathbf{Ab}$ preserves finite limits. Further, since the forgetful functors in \eqref{userandomdiagrum3.33} are conservative and preserve finite limits, it follows that
$\beta^* : \mathbf{Mod}_{mod_{\mathbb Z/p\mathbb Z}}(\mathbb Z) \longrightarrow \mathbf{Mod}_{mod_{\mathbb Z/p\mathbb Z}}(\mathbb Z[q^{-1}])$ preserves finite limits. Hence,} $\beta : \mathbb Z \longrightarrow \mathbb Z[q^{-1}]$ \emph{is $mod_{\mathbb Z/p\mathbb Z}$-flat.}

\smallskip
\emph{It follows that the singleton family}
\begin{equation}\label{weirdenuf}
    \left\{Spec(\mathbb Z[q^{-1}]) \xrightarrow{\text{ }\beta^{op}\text{ }} Spec(\mathbb Z)\right\}
\end{equation}\emph{is a spectral $mod_{\mathbb Z/p\mathbb Z}$-cover for $Spec(\mathbb Z)$}. \emph{Our next result shows that the spectral $mod_{\mathbb Z/p\mathbb Z}$-topology on $Aff_{\mathbf{Ab}}$ is not subcanonical.}
\end{examples}
\begin{thm} 
There exists a commutative ring $R$ such that the representable presheaf $Aff_{\mathbf{Ab}}(\_\_, Spec(R))$ is not a sheaf for the spectral $mod_{\mathbb Z/p\mathbb Z}$-topology on $Aff_{\mathbf{Ab}}$. In other words, the spectral $mod_{\mathbb Z/p\mathbb Z}$-topology on $Aff_{\mathbf{Ab}}$ is not subcanonical.
\end{thm}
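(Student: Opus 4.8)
The plan is to test the sheaf axiom directly against the single spectral $Mod\text{-}\mathbb{Z}/p\mathbb{Z}$-cover
$\{Spec(\mathbb{Z}_{(q)}) \xrightarrow{\beta^{op}} Spec(\mathbb{Z})\}$ produced at the end of Example \ref{exampleactegorytop}. For any presheaf $F$ on $Aff_{Ab}$, being a sheaf for this singleton cover means exactly that
\[
F(Spec(\mathbb{Z})) \longrightarrow F(Spec(\mathbb{Z}_{(q)})) \rightrightarrows F\left(Spec(\mathbb{Z}_{(q)}) \times_{Spec(\mathbb{Z})} Spec(\mathbb{Z}_{(q)})\right)
\]
is an equalizer. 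So the first step is to compute the fibre product. In $Aff_{Ab} = Comm(Ab)^{op}$ it is represented by the pushout $\mathbb{Z}_{(q)} \otimes_{\mathbb{Z}} \mathbb{Z}_{(q)}$ in $Comm(Ab)$, and since $\mathbb{Z}_{(q)}$ is a localization of $\mathbb{Z}$, the multiplication map furnishes the standard isomorphism $\mathbb{Z}_{(q)} \otimes_{\mathbb{Z}} \mathbb{Z}_{(q)} \cong \mathbb{Z}_{(q)}$. Under this isomorphism both canonical inclusions $\mathbb{Z}_{(q)} \longrightarrow \mathbb{Z}_{(q)} \otimes_{\mathbb{Z}} \mathbb{Z}_{(q)}$ become the identity, so the two projections out of the fibre product coincide and equal $1_{Spec(\mathbb{Z}_{(q)})}$.

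Consequently the two parallel arrows in the sheaf diagram are equal, whence their equalizer is all of $F(Spec(\mathbb{Z}_{(q)}))$. Thus $F$ satisfies the sheaf condition for this cover precisely when the single restriction map $F(Spec(\mathbb{Z})) \longrightarrow F(Spec(\mathbb{Z}_{(q)}))$ is a bijection. Specializing to a representable presheaf $F = Aff_{Ab}(\_\_, Spec(R))$ and unwinding $Aff_{Ab}(Spec(A), Spec(R)) = Comm(Ab)(R, A)$, this restriction map is simply post-composition with $\beta$,
\[
\beta \circ \_\_ : Comm(Ab)(R, \mathbb{Z}) \longrightarrow Comm(Ab)(R, \mathbb{Z}_{(q)}).
\]
Hence it will suffice to exhibit a commutative ring $R$ for which this map fails to be a bijection.

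For the final step I would take $R = \mathbb{Z}[x]$, the polynomial ring, which represents the forgetful functor $Comm(Ab) \longrightarrow Set$. Evaluation at $x$ then gives $Comm(Ab)(\mathbb{Z}[x], \mathbb{Z}) \cong \mathbb{Z}$ and $Comm(Ab)(\mathbb{Z}[x], \mathbb{Z}_{(q)}) \cong \mathbb{Z}_{(q)}$, and under these identifications $\beta \circ \_\_$ becomes the inclusion $\mathbb{Z} \hookrightarrow \mathbb{Z}_{(q)}$. This inclusion is injective but visibly not surjective, since for instance $1/q \in \mathbb{Z}_{(q)}$ lies outside its image; hence it is not a bijection. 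Therefore $Aff_{Ab}(\_\_, Spec(\mathbb{Z}[x]))$ fails the sheaf condition for the cover $\{Spec(\mathbb{Z}_{(q)}) \longrightarrow Spec(\mathbb{Z})\}$, and the spectral $Mod\text{-}\mathbb{Z}/p\mathbb{Z}$-topology on $Aff_{Ab}$ is not subcanonical. The argument is otherwise routine; the only point that genuinely demands attention is the fibre-product computation, where the idempotence $\mathbb{Z}_{(q)} \otimes_{\mathbb{Z}} \mathbb{Z}_{(q)} \cong \mathbb{Z}_{(q)}$ of the localization collapses the two parallel maps and thereby reduces the whole sheaf condition to a single, easily-violated bijectivity statement.
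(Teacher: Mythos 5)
Your proposal is correct and follows essentially the same route as the paper: both test the sheaf condition against the singleton spectral cover $\{Spec(\mathbb Z_{(q)})\to Spec(\mathbb Z)\}$ from Example \ref{exampleactegorytop}, use the fact that $\beta$ is an epimorphism in $Comm(Ab)$ to collapse the two maps into $\mathbb Z_{(q)}\otimes_{\mathbb Z}\mathbb Z_{(q)}\cong\mathbb Z_{(q)}$, and reduce to the failure of $Comm(Ab)(R,\mathbb Z)\to Comm(Ab)(R,\mathbb Z_{(q)})$ to be a bijection. The only difference is cosmetic: the paper merely fixes some $R$ with $Comm(Ab)(R,\mathbb Z)\ncong Comm(Ab)(R,\mathbb Z_{(q)})$, whereas you exhibit the explicit witness $R=\mathbb Z[x]$, which if anything makes the argument more complete.
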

\begin{proof} 
We fix a commutative ring $R$ such that $Comm(\mathbf{Ab})(R, \mathbb Z) \ncong Comm(\mathbf{Ab})(R, \mathbb Z[q^{-1}])$ as sets. Suppose that $Aff_{\mathbf{Ab}}(\_\_, Spec(R)) : Aff_{\mathbf{Ab}}^{op} \longrightarrow Set$ is a sheaf for the spectral $mod_{\mathbb Z/p\mathbb Z}$-topology on $Aff_{\mathbf{Ab}}$. We consider the spectral $mod_{\mathbb Z/p\mathbb Z}$-cover
\begin{equation}
    \left\{Spec(\mathbb Z[q^{-1}]) \xrightarrow{\text{ }\beta^{op}\text{ }} Spec(\mathbb Z)\right\}
\end{equation}for $Spec(\mathbb Z)$. We have an equalizer in $Set$
    \begin{equation}
\begin{tikzcd}
{Aff_{\mathbf{Ab}}(Spec(\mathbb Z), Spec(R))} \arrow[dd, "\_\_ \circ \beta^{op}"', dashed] \arrow[rrr, no head, equal]                                                                    &  &  & {Comm(\mathbf{Ab})(R, \mathbb Z)} \arrow[dd, "\beta \circ \_\_", dashed]                                                                       \\
                                                                                                                                                                                                      &  &  &                                                                                                                                              \\
{Aff_{\mathbf{Ab}}(Spec(\mathbb Z[q^{-1}]), Spec(R))} \arrow[dd, "\_\_ \circ \iota_1^{op}"', shift right=3] \arrow[dd, "\_\_ \circ \iota_2^{op}", shift left=3] \arrow[rrr, no head, equal] &  &  & {Comm(\mathbf{Ab})(R, \mathbb Z[q^{-1}])} \arrow[dd, "\iota_1 \circ \_\_"', shift right=3] \arrow[dd, "\iota_2 \circ \_\_", shift left=3] \\
                                                                                                                                                                                                      &  &  &                                                                                                                                              \\
{Aff_{\mathbf{Ab}}(Spec(\mathbb Z[q^{-1}]) \times_{Spec(\mathbb Z)} Spec(\mathbb Z[q^{-1}]), Spec(R))} \arrow[rrr, no head, equal]                                                                    &  &  & {Comm(\mathbf{Ab})(R, \mathbb Z[q^{-1}] \otimes_{\mathbb Z} \mathbb Z[q^{-1}])}                                                                          
\end{tikzcd}
    \end{equation}where 
\begin{tikzcd}
\mathbb Z[q^{-1}] \arrow[rr, "\iota_1", shift left=2] \arrow[rr, "\iota_2"', shift right=2] &  & \mathbb Z[q^{-1}] \otimes_{\mathbb Z} \mathbb Z[q^{-1}]
\end{tikzcd} are the canonical maps associated to the pushout $\mathbb Z[q^{-1}] \otimes_{\mathbb Z} \mathbb Z[q^{-1}]$. Since the localization $\beta : \mathbb Z \longrightarrow \mathbb Z[q^{-1}]$ is an epimorphism in $Comm(\mathbf{Ab})$, it follows that $\iota_1 = \iota_2 : \mathbb Z[q^{-1}] \longrightarrow \mathbb Z[q^{-1}] \otimes_{\mathbb Z} \mathbb Z[q^{-1}]$ is an isomorphism. Hence,
\begin{equation}\small
    Comm(\mathbf{Ab})(R, \mathbb Z[q^{-1}]) \cong Eq\left(
\begin{tikzcd}
{Comm(\mathbf{Ab})(R, \mathbb Z[q^{-1}])} \arrow[rr, "\iota_1 \circ \_\_", shift left=2] \arrow[rr, "\iota_2 \circ \_\_"', shift right=2] &  & {Comm(\mathbf{Ab})(R, \mathbb Z[q^{-1}] \otimes_{\mathbb Z} \mathbb Z[q^{-1}])}
\end{tikzcd} \right) \cong Comm(\mathbf{Ab})(R, \mathbb Z)
\end{equation}which contradicts the choice of $R$. It follows that $Aff_{\mathbf{Ab}}(\_\_, Spec(R))$ is not a sheaf with respect to the spectral $mod_{\mathbb Z/p\mathbb Z}$-topology on $Aff_{\mathbf{Ab}}$. The proof is now complete.
\end{proof}
\section{Schemes relative to $\mathcal M$}

From now onwards, we will say that the left $\mathcal C$-actegory $(\mathcal M, \boxtimes)$ is subcanonical  if the fpqc $\mathcal M$-topology (and hence, the spectral $\mathcal M$-topology)   on $Aff_{\mathcal C}$ is subcanonical, i.e., for each $Spec(a)\in Aff_{\mathcal C}$, the representable presheaf $Aff_{\mathcal C}(\_\_, Spec(a))$ on $Aff_{\mathcal C}$ is a sheaf  for the fpqc $\mathcal M$-topology on $Aff_{\mathcal C}$. 
 In this section, we present the main definition of this paper, that of a scheme relative to $\mathcal M$. We assume throughout this section that the left $\mathcal C$-actegory $\mathcal M$ is subcanonical. We begin with the following lemma.
 
 \begin{lem}\label{coverinducepiintopos}
For every spectral $\mathcal M$-cover $\{\alpha_i^{op} : Spec(a_i) \longrightarrow Spec(a)\}_{i \in I}$, the canonical morphism 
\begin{equation}\coprod_{i \in I} Aff_{\mathcal C}(\_\_, Spec(a_i)) \xrightarrow{\quad\coprod_{i \in I} Aff_{\mathcal C}(\_\_,\alpha_i^{op})=\coprod_{i \in I}\alpha_i^{op}\quad}  Aff_{\mathcal C}(\_\_, Spec(a))
\end{equation} is an epimorphism  in $Sh(Aff_{\mathcal C})_{\mathcal M}$. 
\end{lem}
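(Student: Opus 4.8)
The plan is to use the standard characterization of epimorphisms in a Grothendieck topos: a morphism $\phi : F \longrightarrow G$ in $Sh(Aff_{\mathcal C})_{\mathcal M}$ is an epimorphism if and only if it is a \emph{local epimorphism}, i.e. for every $Spec(b) \in Aff_{\mathcal C}$ and every section $t \in G(Spec(b))$ there is a spectral $\mathcal M$-cover $\{u_k : Y_k \longrightarrow Spec(b)\}_k$ such that each restriction $G(u_k)(t)$ lies in the image of $\phi_{Y_k} : F(Y_k) \longrightarrow G(Y_k)$. In our situation $F = \coprod_{i\in I} Aff_{\mathcal C}(\_\_, Spec(a_i))$ is the coproduct computed in $Sh(Aff_{\mathcal C})_{\mathcal M}$ (the sheafification of the presheaf coproduct), $G = Aff_{\mathcal C}(\_\_, Spec(a))$, and $\phi$ is the map whose restriction to the $i$-th summand is $\alpha_i^{op}\circ(\_\_)$. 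Since $\mathcal M$ is assumed subcanonical, both $G$ and each summand are sheaves, so by the Yoneda lemma a section $t \in G(Spec(b))$ is the same datum as a morphism $t : Spec(b) \longrightarrow Spec(a)$ in $Aff_{\mathcal C}$, equivalently a morphism $a \longrightarrow b$ in $Comm(\mathcal C)$.

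First I would fix such a $t$ and pull the given cover back along it. Forming the pushout of $a \longrightarrow b$ against each $\alpha_i : a \longrightarrow a_i$ in $Comm(\mathcal C)$ yields $a_i\otimes_a b$ together with the pullback square
\begin{equation*}
\begin{tikzcd}
Spec(a_i\otimes_a b) \arrow[r, "(\beta_i')^{op}"] \arrow[d, "(\alpha_i')^{op}"'] & Spec(a_i) \arrow[d, "\alpha_i^{op}"] \\
Spec(b) \arrow[r, "t"'] & Spec(a)
\end{tikzcd}
\end{equation*}
in $Aff_{\mathcal C}$. By Theorem \ref{checkfortopology} the spectral $\mathcal M$-covers form a basis for a Grothendieck topology, and the verification given there shows precisely that the pulled-back family $\{(\alpha_i')^{op} : Spec(a_i\otimes_a b) \longrightarrow Spec(b)\}_{i\in I}$ is again a spectral $\mathcal M$-cover for $Spec(b)$ (invoking Lemma \ref{flat} for $\mathcal M$-flatness and Lemma \ref{stabilityspec} for the stability of spectral immersions under pullback). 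This family is the cover $\{u_k\}$ we use, with $u_i := (\alpha_i')^{op}$.

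Next I would read off the commutativity of the displayed square: it says exactly that the restriction $G(u_i)(t) = t\circ u_i$ factors as $\alpha_i^{op}\circ(\beta_i')^{op}$, where $(\beta_i')^{op} : Spec(a_i\otimes_a b) \longrightarrow Spec(a_i)$ is the top horizontal arrow. Thus $G(u_i)(t)$ is the image under $\phi_{Spec(a_i\otimes_a b)}$ of the element $(\beta_i')^{op} \in Aff_{\mathcal C}(Spec(a_i\otimes_a b), Spec(a_i))$, regarded inside the $i$-th summand of $F$. Hence every restriction of $t$ along a member of the cover lies in the image of $\phi$, so $\phi$ is a local epimorphism and therefore an epimorphism in $Sh(Aff_{\mathcal C})_{\mathcal M}$, as claimed.

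The one point to handle with care is the passage through the coproduct: because $F$ is the sheafification of the presheaf coproduct, a general section of $F$ need not come from a single summand, so I must produce the lift of $G(u_i)(t)$ from one honest summand $Aff_{\mathcal C}(\_\_, Spec(a_i))$ rather than from an abstract glued section — and the factorization through the pullback square does exactly this. The only other ingredient carrying genuine content, namely that the pulled-back family is again a cover, is already supplied by the base-change stability built into Theorem \ref{checkfortopology}; conceptually, the lemma is the familiar fact that on a subcanonical site a covering family induces an epimorphic family of representable sheaves, and the argument above simply makes this explicit in the present setting.
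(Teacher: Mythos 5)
Your proof is correct, but it takes a genuinely different route from the paper's. You invoke the topos-theoretic characterization of epimorphisms in $Sh(Aff_{\mathcal C})_{\mathcal M}$ as locally surjective maps (the ``if'' direction of \cite[$\S$ 3.7, Corollary 5]{MM}, whose ``only if'' direction the paper itself uses later in Lemma \ref{factoexplicit}), and then verify local surjectivity by pulling the cover back along an arbitrary section $t : Spec(b) \longrightarrow Spec(a)$ and reading the required lift off the resulting pullback square; this needs the base-change stability of spectral $\mathcal M$-covers from Proposition \ref{checkfortopology}, and you correctly handle the subtlety that the lift must be produced in a single summand before passing to the sheafified coproduct. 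The paper instead argues directly with the universal property: given $p, q : Aff_{\mathcal C}(\_\_, Spec(a)) \longrightarrow F$ agreeing after precomposition, it converts $p, q$ via the Yoneda lemma into elements $p', q' \in F(Spec(a))$ whose restrictions to each $F(Spec(a_i))$ coincide, and concludes $p' = q'$ from the injectivity half of the sheaf condition for $F$ with respect to the given cover. The paper's argument is more elementary and self-contained (no base change, no appeal to the local-epimorphism criterion, only separatedness of an arbitrary test sheaf), while yours is the standard ``covering families induce epimorphic families of representables on a subcanonical site'' argument and makes the geometric content --- refinement of sections along a pulled-back cover --- explicit. Both are complete.
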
 
\begin{proof}
    Let $p, q$ be morphisms in  $Sh(Aff_{\mathcal C})_{\mathcal M}$ such that the compositions
    \begin{equation}\label{eqtui4.1}
    \begin{array}{c}
        \coprod_{i \in I} Aff_{\mathcal C}(\_\_, Spec(a_i)) \xrightarrow{\quad\coprod_{i \in I}\alpha_i^{op}\quad}  Aff_{\mathcal C}(\_\_, Spec(a)) \xrightarrow{\text{ }p\text{ }} F\\
        \coprod_{i \in I} Aff_{\mathcal C}(\_\_, Spec(a_i)) \xrightarrow{\quad\coprod_{i \in I}\alpha_i^{op}\quad}   Aff_{\mathcal C}(\_\_, Spec(a)) \xrightarrow{\text{ }q\text{ }} F\\
        \end{array}
    \end{equation}are equal. Then, \eqref{eqtui4.1} implies that for each $i \in I$, we have $ p \circ \alpha_i^{op} = q \circ \alpha_i^{op}$ in $Sh(Aff_{\mathcal C})_{\mathcal M}$.
 Let $p'$ (resp. $q'$) be the element of $F(Spec(a))$ corresponding under  Yoneda lemma to   $p$ (resp. $q$). It follows that for each $i \in I$,  $p \circ \alpha_i^{op} = q \circ \alpha_i^{op}$ corresponds under  Yoneda Lemma to the element $F(\alpha_i^{op})(p') = F(\alpha_i^{op})(q') \in F(Spec(a_i))$. Since $F \in Sh(Aff_{\mathcal C})_{\mathcal M}$ and  $\{\alpha_i^{op} : Spec(a_i) \longrightarrow Spec(a)\}_{i \in I}$  is a spectral $\mathcal M$-cover, we see that the map 
 \begin{equation} F(Spec(a)) \longrightarrow \prod_{i \in I}F(Spec(a_i)) \qquad x \mapsto (F(\alpha_i^{op})(x))_{i \in I}
 \end{equation} must be  injective. It follows that $p' = q'$ and hence $p = q$.  
\end{proof}
 
\begin{defn}\label{zariskiopenforsheaves}
   (1) Let $X := Aff_{\mathcal C}(\_\_, Spec(a))$ be an affine scheme and $F \subseteq X$ a subsheaf in $Sh(Aff_{\mathcal C})_{\mathcal M}$. We say that $F\subseteq X$ is a Zariski open  relative to $\mathcal M$   if there is a family $\{Spec(a_i) \longrightarrow Spec(a)\}_{i \in I}$ of spectral immersions relative to $\mathcal M$  such that the induced morphism $\coprod_{i \in I} Aff_{\mathcal C}(\_\_, Spec(a_i)) \longrightarrow Aff_{\mathcal C}(\_\_, Spec(a))=X$ has image $F$, i.e., we have
    \begin{equation}
        \coprod_{i \in I} Aff_{\mathcal C}(\_\_, Spec(a_i)) \xrightarrow{\mbox{$\quad$ epi $\quad$ }} F \hookrightarrow Aff_{\mathcal C}(\_\_, Spec(a))=X
    \end{equation} in $Sh(Aff_{\mathcal C})_{\mathcal M}$.

    \smallskip
    (2) A morphism $F \longrightarrow G$ in $Sh(Aff_{\mathcal C})_{\mathcal M}$ is a Zariski open immersion relative to $\mathcal M$ if for each $X\in Aff_{\mathcal C}$ and each morphism $X \longrightarrow G$, the induced morphism $F \times_G X \longrightarrow X$ in $Sh(Aff_{\mathcal C})_{\mathcal M}$ is a monomorphism whose image is Zariski open relative to $\mathcal M$ in $X$.
    \end{defn}
    \begin{lem}\label{grotop}
(1) Let $G \longrightarrow H$ be a morphism in $Sh(Aff_{\mathcal C})_{\mathcal M}$. Then, the functor $\_\_ \times_H G : Sh(Aff_{\mathcal C})_{\mathcal M}/H \longrightarrow Sh(Aff_{\mathcal C})_{\mathcal M}/G$ sending $(F \longrightarrow H) \mapsto (F \times_H G \longrightarrow G)$ preserves colimits.

\smallskip
(2) Let  $\coprod_{i \in I} F_i \longrightarrow H$ be an epimorphism in $Sh(Aff_{\mathcal C})_{\mathcal M}$. If $G\longrightarrow H$ is a morphism in  $Sh(Aff_{\mathcal C})_{\mathcal M}$, then the induced morphism $\coprod_{i \in I} F_i \times_H G \longrightarrow G$ is an epimorphism  in  $Sh(Aff_{\mathcal C})_{\mathcal M}$.
\end{lem}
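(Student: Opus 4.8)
The plan is to use that, by Proposition~\ref{checkfortopology}, the spectral $\mathcal M$-topology $spc_{\mathcal M}$ is a genuine Grothendieck topology on $Aff_{\mathcal C}$, so that $Sh(Aff_{\mathcal C})_{\mathcal M}$ is a category of sheaves on a site. Both parts are then instances of the standard exactness properties of such a category, and I would derive them from the two tools available here: the sheafification functor $a : PSh(Aff_{\mathcal C}) \longrightarrow Sh(Aff_{\mathcal C})_{\mathcal M}$ is left adjoint to the inclusion and is left exact, while colimits in $Sh(Aff_{\mathcal C})_{\mathcal M}$ are computed by sheafifying the pointwise colimit taken in $PSh(Aff_{\mathcal C})$, and finite limits (in particular pullbacks) of sheaves agree with those taken in $PSh(Aff_{\mathcal C})$, since the inclusion is a right adjoint and a pullback of sheaves is again a sheaf.

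For part (1), I would first recall that the forgetful functor $Sh(Aff_{\mathcal C})_{\mathcal M}/H \longrightarrow Sh(Aff_{\mathcal C})_{\mathcal M}$ creates colimits, so that $\_\_ \times_H G$ preserves colimits exactly when colimits in $Sh(Aff_{\mathcal C})_{\mathcal M}$ are universal, i.e. $(\mathrm{colim}_{i}\, F_i) \times_H G \cong \mathrm{colim}_{i}\,(F_i \times_H G)$ for every diagram $(F_i)$ over $H$. This I would verify by a diagram chase. Writing $C := \mathrm{colim}^{PSh}_{i} F_i$ for the pointwise colimit, universality of colimits in $\mathbf{Set}$ applied objectwise gives $C \times_H G \cong \mathrm{colim}^{PSh}_{i}(F_i \times_H G)$ in $PSh(Aff_{\mathcal C})$. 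Since each $F_i \times_H G$ is already a sheaf, one has $\mathrm{colim}^{Sh}_{i}(F_i \times_H G) = a\big(\mathrm{colim}^{PSh}_{i}(F_i \times_H G)\big) \cong a(C \times_H G)$, and left exactness of $a$ together with $aH \cong H$, $aG \cong G$ gives $a(C \times_H G) \cong aC \times_H G = (\mathrm{colim}^{Sh}_{i} F_i) \times_H G$, which is the required universality.

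For part (2), I would deduce the statement from part (1). Since $\_\_ \times_H G$ preserves coproducts, $\coprod_{i \in I}(F_i \times_H G) \cong \big(\coprod_{i \in I} F_i\big) \times_H G$, so the morphism in question is precisely the pullback of the given epimorphism $\coprod_{i \in I} F_i \longrightarrow H$ along $G \longrightarrow H$; it therefore suffices to show that epimorphisms in $Sh(Aff_{\mathcal C})_{\mathcal M}$ are stable under pullback. For this I would invoke the local characterization of epimorphisms of sheaves: $f : F' \longrightarrow G'$ is an epimorphism in $Sh(Aff_{\mathcal C})_{\mathcal M}$ if and only if it is locally surjective, meaning that for every $Spec(a) \in Aff_{\mathcal C}$ and every section $s \in G'(Spec(a))$ there is a spectral $\mathcal M$-cover $\{Spec(a_i) \longrightarrow Spec(a)\}_{i}$ such that each $s|_{Spec(a_i)}$ lifts through $f$. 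Given $G \longrightarrow H$ and the epimorphism $e : \coprod_{i} F_i \longrightarrow H$, a section $s \in G(Spec(a))$ maps to $H(Spec(a))$, which lifts through $e$ after passing to a spectral $\mathcal M$-cover; the resulting pairs, consisting of a local lift and the corresponding restriction of $s$, assemble into sections of $\big(\coprod_i F_i\big) \times_H G$ mapping onto the $s|_{Spec(a_i)}$. Hence the pullback is locally surjective, i.e. an epimorphism.

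The step I expect to be the main obstacle is the universality of colimits in part (1): one must check carefully that sheafification, being left exact, really does commute with the relevant pullback and that no correction terms intervene when passing from the presheaf colimit to the sheaf colimit; keeping track of which objects are already sheaves (so that the pullbacks and the $F_i \times_H G$ need not be resheafified) is the delicate bookkeeping. Once universality is in hand, part (2) is essentially formal, the only genuine input being the local-surjectivity description of epimorphisms, which must be phrased in terms of the spectral $\mathcal M$-covers that generate $spc_{\mathcal M}$.
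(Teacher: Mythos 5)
Your proposal is correct. Part (1) is essentially the paper's own argument: universality of colimits in $\mathbf{Set}$ applied objectwise gives the statement for presheaves, and one then transfers it to $Sh(Aff_{\mathcal C})_{\mathcal M}$ using that sheafification preserves colimits and (being a filtered colimit construction) finite limits; your extra bookkeeping about which objects are already sheaves is exactly the ``now clear'' step the paper leaves implicit. For part (2) you and the paper agree on the reduction --- identify $\coprod_i (F_i\times_H G)\cong (\coprod_i F_i)\times_H G$ via part (1), so that everything comes down to stability of epimorphisms under pullback --- but you justify that stability differently. The paper stays purely formal: it views the epimorphism as a morphism to $(H\xrightarrow{id}H)$ in the slice $Sh(Aff_{\mathcal C})_{\mathcal M}/H$ and applies part (1) again, using that a colimit-preserving functor preserves epimorphisms (since $f$ is epi iff its cokernel pair is trivial). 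You instead invoke the site-theoretic characterization of epimorphisms of sheaves as locally surjective maps (the same \cite[\S 3.7, Corollary 5]{MM} criterion the paper uses elsewhere, e.g.\ in Lemma \ref{factoexplicit}) and check local surjectivity of the pullback by hand. Both are sound; the paper's route gets (2) for free from (1) with no further input, while yours costs an external lemma but makes the geometric content (local lifting of sections along spectral $\mathcal M$-covers) explicit, which is in fact the style of argument the paper needs later anyway.
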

\begin{proof}
(1) It is easy to verify that the analogous result holds in the category of sets. Since colimits in $PSh(Aff_{\mathcal C})$ are computed objectwise, we see that the functor
$\_\_ \times_H G : PSh(Aff_{\mathcal C})/H \longrightarrow PSh(Aff_{\mathcal C})/G$ sending $(F \longrightarrow H) \mapsto (F \times_H G \longrightarrow G)$ preserves colimits.  Let  $(.)^{++}_{\mathcal M}: PSh(Aff_{\mathcal C}) \longrightarrow Sh(Aff_{\mathcal C})_{\mathcal M}$  be the sheafification functor which is left adjoint 
to the inclusion $Sh(Aff_{\mathcal C})_{\mathcal M} \hookrightarrow PSh(Aff_{\mathcal C})$. In particular, the functor $(.)^{++}_{\mathcal M}$ preserves colimits. Since sheafification is obtained from  filtered
 colimits (see for instance, \cite[$\S$ 3.5]{MM}), we know that it also preserves finite limits. The result is now clear.  

    \smallskip
(2) It follows from part (1) that $\coprod_i F_i \times_H G \cong \coprod_i \left(F_i \times_H G\right)$ in $Sh(Aff_{\mathcal C})_{\mathcal M}/G$. Since   $\coprod_i F_i \longrightarrow H$ is an epimorphism in $Sh(Aff_{\mathcal C})_{\mathcal M}$, it induces an epimorphism $(\coprod_i F_i \longrightarrow H) \longrightarrow (H\xrightarrow{id} H)$ in $Sh(Aff_{\mathcal C})_{\mathcal M}/H$. Using part (1), it follows that the morphism $(\coprod_i F_i \times_H G \cong \coprod_i (F_i \times_H G)\longrightarrow G) \longrightarrow (G\xrightarrow{id}G)$ is an epimorphism in $Sh(Aff_{\mathcal C})_{\mathcal M}/G$ and hence  $\coprod_{i \in I} F_i \times_H G \longrightarrow G$ is an epimorphism in $Sh(Aff_{\mathcal C})_{\mathcal M}$. This completes the proof.
\end{proof}
\begin{lem}\label{zaropenimmono}
 A Zariski open immersion $f : F \longrightarrow G$ relative to $\mathcal M$  is a monomorphism in $Sh(Aff_{\mathcal C})_{\mathcal M}$.
\end{lem}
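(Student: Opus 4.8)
The plan is to reduce the assertion to the behaviour of $f$ on affine schemes and then to invoke the defining pullback condition of a Zariski open immersion. Since we work under the standing assumption that $\mathcal M$ is subcanonical, every representable presheaf $Aff_{\mathcal C}(\_\_, Spec(a))$ is a sheaf, and the affine schemes form a generating family for $Sh(Aff_{\mathcal C})_{\mathcal M}$, in the sense that every sheaf receives an epimorphism from a coproduct of representables. First I would record the resulting standard reduction: to prove that $f : F \longrightarrow G$ is a monomorphism it suffices to check that for every affine scheme $X \in Aff_{\mathcal C}$ the map $Sh(Aff_{\mathcal C})_{\mathcal M}(X, F) \longrightarrow Sh(Aff_{\mathcal C})_{\mathcal M}(X, G)$ is injective. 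Indeed, given two morphisms $u, v : T \longrightarrow F$ out of an arbitrary sheaf $T$ with $f \circ u = f \circ v$, one selects a jointly epimorphic family $\{g_j : X_j \longrightarrow T\}_{j}$ with each $X_j$ affine; injectivity on each $X_j$ forces $u \circ g_j = v \circ g_j$ for all $j$, and since the $g_j$ are jointly epimorphic this yields $u = v$.

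Next I would carry out the core argument on a fixed affine scheme. Let $X = Spec(a)$ and let $u, v : X \longrightarrow F$ be morphisms with $f \circ u = f \circ v =: h : X \longrightarrow G$. Applying Definition \ref{zariskiopenforsheaves}(2) to the morphism $h : X \longrightarrow G$, the projection $\pi_X : F \times_G X \longrightarrow X$ is a monomorphism in $Sh(Aff_{\mathcal C})_{\mathcal M}$. Now the pair $(u, \mathrm{id}_X)$ satisfies $f \circ u = h = h \circ \mathrm{id}_X$, so by the universal property of the fibre product it induces a unique $\widetilde u : X \longrightarrow F \times_G X$ with $\pi_F \circ \widetilde u = u$ and $\pi_X \circ \widetilde u = \mathrm{id}_X$; likewise $(v, \mathrm{id}_X)$ induces $\widetilde v$ with $\pi_F \circ \widetilde v = v$ and $\pi_X \circ \widetilde v = \mathrm{id}_X$. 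Thus $\widetilde u$ and $\widetilde v$ are both sections of $\pi_X$, and since $\pi_X \circ \widetilde u = \mathrm{id}_X = \pi_X \circ \widetilde v$ with $\pi_X$ a monomorphism, we conclude $\widetilde u = \widetilde v$, whence $u = \pi_F \circ \widetilde u = \pi_F \circ \widetilde v = v$. This establishes injectivity on affine schemes and, by the reduction above, completes the proof.

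I expect the main obstacle to be the first (reduction) step rather than the second. It is there that one must be certain that detecting monomorphy on the representable objects is legitimate; this rests squarely on subcanonicity, which both places every affine scheme inside $Sh(Aff_{\mathcal C})_{\mathcal M}$ and ensures the representables form a generating family, together with the presentation of an arbitrary sheaf as a quotient of a coproduct of affine schemes. Once this is granted, the remainder is a routine diagram chase with the universal property of $F \times_G X$, using only that the projection $\pi_X$ is a monomorphism. (Alternatively, one may replace the generating-family argument by the observation that the fully faithful inclusion $Sh(Aff_{\mathcal C})_{\mathcal M} \hookrightarrow PSh(Aff_{\mathcal C})$, being a right adjoint to sheafification, preserves and reflects monomorphisms, so that $f$ is a monomorphism precisely when each $F(X) \longrightarrow G(X)$ is injective; the core pullback argument is then applied verbatim via the Yoneda identification $F(X) \cong Sh(Aff_{\mathcal C})_{\mathcal M}(X, F)$.)
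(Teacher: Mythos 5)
Your proof is correct and follows essentially the same route as the paper's: both reduce, via Yoneda, to testing equality of morphisms out of affine schemes, and both then exploit the defining property that $F \times_G X \longrightarrow X$ is a monomorphism for each affine $X$ and each $X \longrightarrow G$. Your version is slightly more streamlined --- the paper argues by contradiction and routes through an auxiliary pullback $H \times_G Y$ before extracting a section, whereas you work directly with the two induced sections of $F \times_G X \longrightarrow X$ --- but the underlying mechanism is identical.
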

\begin{proof}
  Let $p : H \longrightarrow F, q : H \longrightarrow F$ be morphisms in $Sh(Aff_{\mathcal C})_{\mathcal M}$ such that $f \circ p = f \circ q$. Suppose that $p \neq q$. Then, there exists   $Spec(a) \in Aff_{\mathcal C}$ with $p_{Spec(a)} \neq q_{Spec(a)}$. Hence, there must be an element $x \in H(Spec(a))$ with $p_{Spec(a)}(x) \neq q_{Spec(a)}(x)$.
 We set $Y = Aff_{\mathcal C}(\_\_, Spec(a))$. Let $h : Y \longrightarrow H$ (resp. $g : Y \longrightarrow G$) be the morphism corresponding under Yoneda lemma to the element $x \in H(Spec(a))$ (resp. $f_{Spec(a)}(p_{Spec(a)}(x)) \in G(Spec(a))$). It follows that 
       \begin{equation}\label{eqtuni4.5}
       f \circ p \circ h = g
       \end{equation}Further, since $p_{Spec(a)}(x) \neq q_{Spec(a)}(x)$, we have
       \begin{equation}\label{eqtuni4.6}
       p \circ h \neq q \circ h
       \end{equation}We  consider the pullback squares in $Sh(Aff_{\mathcal C})_{\mathcal M}$
\begin{equation}
\begin{tikzcd}
F \times_G Y \arrow[rr, "f'"] \arrow[d, "g'"] &  & Y \arrow[d, "g"] &  & H \times_G Y \arrow[rr, "t"] \arrow[d, "g''"] &  & Y \arrow[d, "g"] \\
F \arrow[rr, "f"]                             &  & G                &  & H \arrow[rr, "f \circ p = f \circ q"]         &  & G               
\end{tikzcd}
\end{equation}
Using the universal property of $F \times_G Y$, the pair $(p \circ g'' : H \times_G Y \longrightarrow F,\text{ }t : H \times_G Y \longrightarrow Y)$ (resp. $(q \circ g'' : H \times_G Y \longrightarrow F,\text{ }t : H \times_G Y \longrightarrow Y)$) induces a unique morphism
\begin{equation}
    p' : H \times_G Y \longrightarrow F \times_G Y,\quad(\text{ resp. } q' : H \times_G Y \longrightarrow F \times_G Y)
\end{equation}
such that $p \circ g'' = g' \circ p'$ and $ f' \circ p' = t$ (resp. $q \circ g'' = g' \circ q'$ and $f' \circ q' = t$).

\smallskip
By Definition \ref{zariskiopenforsheaves}(2), since $f$ is a Zariski open immersion relative to $\mathcal M$, we see that $f'$ is a monomorphism in $Sh(Aff_{\mathcal C})_{\mathcal M}$.  Hence, the equality $f' \circ p' = t = f' \circ q'$ implies that $p' = q'$. It follows that
\begin{equation}\label{eqtuni4.9} 
p \circ g'' = g' \circ p' = g' \circ q' = q \circ g''
\end{equation}

\smallskip
Using \eqref{eqtuni4.5} and the universal property of   $H \times_G Y$, the pair $(h : Y \longrightarrow H, 1_Y : Y \longrightarrow Y)$ induces a unique morphism $u : Y \longrightarrow H \times_G Y$ such that $t \circ u = 1_Y$ and $g'' \circ u = h$. It follows from \eqref{eqtuni4.9} that
\begin{equation} 
p \circ h = p \circ g'' \circ u = q \circ g'' \circ u = q \circ h
\end{equation}which contradicts \eqref{eqtuni4.6}. Hence $p = q$. This proves the result.
\end{proof}
\begin{lem}\label{lema4.2}
In $Sh(Aff_{\mathcal C})_{\mathcal M}$,  Zariski open immersions relative to $\mathcal M$ are stable under base change and closed under composition.
\end{lem}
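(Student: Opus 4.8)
The plan is to verify both assertions directly against Definition \ref{zariskiopenforsheaves}(2), handling base change first since the composition argument will rely on it. For \emph{stability under base change}, I would start from a Zariski open immersion $f:F\longrightarrow G$ relative to $\mathcal M$ and an arbitrary morphism $u:G'\longrightarrow G$, and test the projection $f':F\times_G G'\longrightarrow G'$. Fixing an affine $X$ and a morphism $X\longrightarrow G'$, the pasting law for pullbacks supplies a canonical isomorphism $(F\times_G G')\times_{G'}X\cong F\times_G X$ over $X$, with $X\longrightarrow G$ the composite $X\longrightarrow G'\xrightarrow{\text{ }u\text{ }}G$. The defining property of $f$ then says that $F\times_G X\longrightarrow X$ is a monomorphism with image Zariski open relative to $\mathcal M$ in $X$, and transporting this across the isomorphism establishes Definition \ref{zariskiopenforsheaves}(2) for $f'$. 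This part is routine.

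The substantial case is \emph{closure under composition}. Given Zariski open immersions $f:F\longrightarrow G$ and $g:G\longrightarrow H$, I would fix an affine $X=Aff_{\mathcal C}(\_\_,Spec(a))$ with a morphism $X\longrightarrow H$ and set $Y:=G\times_H X$, with projection $\pi:Y\longrightarrow X$. Definition \ref{zariskiopenforsheaves}(2) applied to $g$ makes $\pi$ a monomorphism whose image $V\subseteq X$ is Zariski open relative to $\mathcal M$; so there is a family of spectral immersions $\{Spec(b_j)\longrightarrow Spec(a)\}_{j\in J}$, with $Y_j:=Aff_{\mathcal C}(\_\_,Spec(b_j))$, whose induced map $\coprod_j Y_j\longrightarrow X$ has image $V$, and since $\pi$ is a monomorphism this factors through an epimorphism $\coprod_j Y_j\longrightarrow Y$. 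The pasting law yields $F\times_H X\cong F\times_G Y$ over $Y$; by the base change case already proved, the projection $F\times_G Y\longrightarrow Y$ is a Zariski open immersion relative to $\mathcal M$, and composing it with $\pi$ (using Lemma \ref{zaropenimmono}) shows $F\times_H X\longrightarrow X$ is a monomorphism. It then remains only to identify its image $U$ as Zariski open relative to $\mathcal M$ in $X$.

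Identifying $U$ is the step I expect to be the main obstacle, precisely because $Y$ need not be affine, so Definition \ref{zariskiopenforsheaves}(1) cannot be read off from $Y$ itself; the idea is to descend to the affine cover $\{Y_j\}$ and recombine. For each $j$ I would set $W_j:=(F\times_G Y)\times_Y Y_j$, for which the projection $W_j\longrightarrow Y_j$ is, by base change, a Zariski open immersion relative to $\mathcal M$ with $Y_j$ affine, so Definition \ref{zariskiopenforsheaves}(2) furnishes spectral immersions $\{Spec(c_{jk})\longrightarrow Spec(b_j)\}_{k\in K_j}$ whose induced map has the same image in $Y_j$ as the monomorphism $W_j\longrightarrow Y_j$; consequently $\coprod_k Spec(c_{jk})\longrightarrow W_j$ is an epimorphism. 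By Lemma \ref{stabilityspec} each composite $Spec(c_{jk})\longrightarrow Spec(b_j)\longrightarrow Spec(a)$ is again a spectral immersion relative to $\mathcal M$. Finally, since $\coprod_j Y_j\longrightarrow Y$ is an epimorphism, Lemma \ref{grotop}(2) makes $\coprod_j W_j\longrightarrow F\times_G Y\cong F\times_H X$ an epimorphism; chaining $\coprod_{j,k}Spec(c_{jk})\longrightarrow\coprod_j W_j\longrightarrow F\times_H X\longrightarrow X$ and using that precomposing with an epimorphism does not alter the image of a map into $X$, I obtain that the image of $\coprod_{j,k}Spec(c_{jk})\longrightarrow X$ is exactly $U$, exhibiting $U$ as Zariski open relative to $\mathcal M$ and completing the verification.
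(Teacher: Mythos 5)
Your proposal is correct and follows essentially the same route as the paper's proof: reduce to an affine test object $X\longrightarrow H$, use Lemma \ref{zaropenimmono} to get the monomorphism property, cover the image of $G\times_H X$ in $X$ by spectral immersions, pull back $F\times_H X\longrightarrow G\times_H X$ (a Zariski open immersion by the base-change case) over each affine piece to obtain a second layer of spectral immersions, and conclude by composing via Lemma \ref{stabilityspec} and assembling images via Lemma \ref{grotop}. The only difference is that you spell out the (routine) base-change step and the image-chasing through the epimorphisms slightly more explicitly than the paper does.
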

\begin{proof} 
It is easy to see that Zariski open immersions relative to $\mathcal M$ are stable under base change.

\smallskip
Suppose that $f:F \longrightarrow G$ and $g:G \longrightarrow H$ are Zariski open immersions relative to $\mathcal M$. We need to show that $g\circ f$ is a Zariski open immersion relative to $\mathcal M$. Let $h:X \longrightarrow H$ be a morphism in $Sh(Aff_{\mathcal C})_{\mathcal M}$ where $X\in Aff_{\mathcal C}$. We consider the following pullback diagram
\begin{equation}
\xymatrix{
F \times_H X \cong F \times_G (G \times_H X) \ar[rr]^{\qquad \qquad f'} \ar[d]^{h''} &  & G \times_H X \ar[rr]^{\quad g'} \ar[d]^{h'} &  & X \ar[d]^h \\
F \ar[rr]^f                                             &  & G \ar[rr]^g                      &  & H  \\
}
\end{equation}It follows from Lemma \ref{zaropenimmono} that both $f$ and $g$ are monomorphisms. Hence,  $g'\circ f':F \times_H X \cong F \times_G (G \times_H X) \longrightarrow G \times_H X \longrightarrow X$ is a monomorphism in $Sh(Aff_{\mathcal C})_{\mathcal M}$. It remains to show that $F \times_H X \subseteq X$ is Zariski open relative to $\mathcal M$.

\smallskip
Since  $g:G \longrightarrow H$ is a Zariski open immersion relative to $\mathcal M$, we see that $g':G \times_H X \longrightarrow X$ is a monomorphism whose image is Zariski open relative to $\mathcal M$. Hence, there exists a family $\{\alpha_i: X_i \longrightarrow X\}_{i \in I}$ of spectral immersions relative to $\mathcal M$ such that  $\coprod_i \alpha_i: \coprod_i X_i \longrightarrow X$ has image $G \times_H X$ in $Sh(Aff_{\mathcal C})_{\mathcal M}$. For each $i'\in I$, we consider the composition $X_{i'} \longrightarrow \coprod_{i\in I} X_{i} \twoheadrightarrow  G \times_H X$ and form the following pullback diagram in $Sh(Aff_{\mathcal C})_{\mathcal M}$
\begin{equation}\label{412edw}
\xymatrix{
F \times_G X_{i'} \cong (F \times_H X) \times_{G \times_H X} X_{i'} \ar[rrr] \ar[d] &  &  & F \times_H X \ar[d]^{f'} \ar[rr]^{h''} &  & F \ar[d]^{f} \\
X_{i'} \ar[rrr]                                                                     &  &  & G \times_H X \ar[rr]^{h'}          &  & G          
}
\end{equation}
Since Zariski open immersions relative to $\mathcal M$ are stable under pullbacks, the morphism $f':F \times_H X \longrightarrow G \times_H X$ is a Zariski open immersion relative to $\mathcal M$. It follows that the morphism
\begin{equation}
    F \times_G X_{i'} \cong (F \times_G (G \times_H X)) \times_{G \times_H X} X_{i'} \longrightarrow X_{i'}
\end{equation} in \eqref{412edw} is a monomorphism whose image is Zariski open relative to $\mathcal M$. Hence, there is a family $\{Y_{i'j} \longrightarrow X_{i'}\}_{j \in J_{i'}}$ of spectral immersions relative to $\mathcal M$ such that the image of the canonical morphism $\coprod_{j \in J_{i'}} Y_{i'j} \longrightarrow X_{i'}$  in $Sh(Aff_{\mathcal C})_{\mathcal M}$  is $F \times_G X_{i'}$. Using Lemma \ref{stabilityspec} and Lemma \ref{grotop}, it may be verified that
\begin{equation} 
\{Y_{i'j} \longrightarrow X_{i'}\longrightarrow X\}_{j \in J_{i'}, i' \in I}
\end{equation}is a family of spectral immersions relative to $\mathcal M$ such that the image of the canonical morphism $\coprod_{j \in J_{i'}, i' \in I} Y_{i'j} \longrightarrow \coprod_{i' \in I} X_{i'} \longrightarrow X$ is $F \times_H X$. This shows that $F \times_H X \subseteq X$ is Zariski open relative to $\mathcal M$. This completes the proof.
\end{proof}

\begin{thm}\label{zaropenim}
Let $X\in Aff_{\mathcal C}$ be an affine scheme.

\smallskip
(1) Let $f : F \longrightarrow X$ be a Zariski open immersion relative to $\mathcal M$. Then, the image of $f$ is Zariski open relative to $\mathcal M$.

\smallskip
(2) Let $F \in Sh(Aff_{\mathcal C})_{\mathcal M}$ be a subsheaf of $X$ that is Zariski open relative to $\mathcal M$. Then, the inclusion $F \hookrightarrow X$ is a Zariski open immersion relative to $\mathcal M$.
\end{thm}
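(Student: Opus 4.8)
For part (1), the plan is to test the defining property of a Zariski open immersion against the identity morphism, which is available precisely because $X$ is itself an affine scheme. Applying Definition \ref{zariskiopenforsheaves}(2) to $f : F \longrightarrow X$ with the test object $1_X : X \longrightarrow X$, the induced morphism $F \times_X X \longrightarrow X$ is a monomorphism whose image is Zariski open relative to $\mathcal M$ in $X$. Since $F \times_X X \cong F$ canonically and this morphism is identified with $f$ itself (which is a monomorphism by Lemma \ref{zaropenimmono}), the image of $f$ is exactly this Zariski open subsheaf. Hence part (1) is immediate from the definition.

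For part (2), write $X = Aff_{\mathcal C}(\_\_, Spec(a))$ and let $\iota : F \hookrightarrow X$ be the inclusion of a subsheaf that is Zariski open relative to $\mathcal M$. By Definition \ref{zariskiopenforsheaves}(1) there is a family $\{Spec(a_i) \longrightarrow Spec(a)\}_{i \in I}$ of spectral immersions relative to $\mathcal M$ such that, writing $X_i := Aff_{\mathcal C}(\_\_, Spec(a_i))$, the canonical morphism $\coprod_{i} X_i \longrightarrow X$ factors as an epimorphism $\coprod_i X_i \twoheadrightarrow F$ followed by $\iota$. To verify that $\iota$ is a Zariski open immersion relative to $\mathcal M$, I would fix an affine scheme $Y = Aff_{\mathcal C}(\_\_, Spec(b))$ together with a morphism $g : Y \longrightarrow X$ and analyze the base change $F \times_X Y \longrightarrow Y$. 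This morphism is a monomorphism because $\iota$ is one and monomorphisms are stable under base change in $Sh(Aff_{\mathcal C})_{\mathcal M}$. Applying the functor $\_\_ \times_X Y$, which preserves colimits and finite limits by Lemma \ref{grotop}, to the factorization $\coprod_i X_i \twoheadrightarrow F \hookrightarrow X$ produces a factorization $\coprod_i (X_i \times_X Y) \twoheadrightarrow F \times_X Y \hookrightarrow Y$, the epimorphism being preserved as in Lemma \ref{grotop}(2).

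It then remains to recognize the left-hand family. Since $(\mathcal M, \boxtimes)$ is subcanonical, the representables $X_i, X, Y$ are sheaves and the Yoneda embedding commutes with the pertinent fiber products; as $Spec(a_i) \times_{Spec(a)} Spec(b)$ is computed by the pushout $a_i \otimes_a b$ in $Comm(\mathcal C)$, we obtain $X_i \times_X Y \cong Aff_{\mathcal C}(\_\_, Spec(a_i \otimes_a b))$, an affine scheme. By Lemma \ref{stabilityspec}, spectral immersions relative to $\mathcal M$ are stable under pullback, so each induced morphism $Spec(a_i \otimes_a b) \longrightarrow Spec(b) = Y$ is again a spectral immersion relative to $\mathcal M$. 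Therefore $\{X_i \times_X Y \longrightarrow Y\}_{i \in I}$ is a family of spectral immersions whose coproduct morphism has image $F \times_X Y$, and Definition \ref{zariskiopenforsheaves}(1) yields that $F \times_X Y \subseteq Y$ is Zariski open relative to $\mathcal M$. As $g$ was arbitrary, $\iota$ is a Zariski open immersion relative to $\mathcal M$. The main obstacle lies exactly in this last step: justifying that $X_i \times_X Y$ is represented by the pushout $Spec(a_i \otimes_a b)$ and that the resulting map to $Y$ is a spectral immersion. This rests on the subcanonical hypothesis, which ensures that fiber products of representables in the sheaf topos agree with those computed in $Aff_{\mathcal C}$, together with the pullback-stability of Lemma \ref{stabilityspec}; the preservation of the epi–mono factorization under base change, though routine given Lemma \ref{grotop}, must also be invoked with care in the sheaf category.
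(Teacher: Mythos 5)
Your proposal is correct and follows essentially the same route as the paper: part (1) by testing the definition against $1_X$, and part (2) by base-changing the factorization $\coprod_i X_i \twoheadrightarrow F \hookrightarrow X$ along an arbitrary affine $Y \to X$, using Lemma \ref{stabilityspec} for pullback-stability of spectral immersions and Lemma \ref{grotop} for preservation of the epimorphism, with the subcanonicity/Yoneda argument identifying $X_i \times_X Y$ with the affine $Spec(a_i \otimes_a b)$. The final "obstacle" you flag is handled exactly as you suggest and is already implicit in the paper's proof.
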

\begin{proof}
(1) The proof is straightforward.

\smallskip
(2) Since $F \subseteq X$ is Zariski open relative to $\mathcal M$, there exists a family $\{X_i \longrightarrow X\}_{i \in I}$ of spectral immersions relative to $\mathcal M$ such that the image of the canonical morphism $\coprod_i X_i \longrightarrow X$ in $Sh(Aff_{\mathcal C})_{\mathcal M}$ is $F$. Let $Z\in Aff_{\mathcal C}$ and $ Z \longrightarrow X$ be a morphism in $Sh(Aff_{\mathcal C})_{\mathcal M}$. For each $i' \in I$, we have a pullback diagram in $Sh(Aff_{\mathcal C})_{\mathcal M}$
\begin{equation}\label{415ewd}
\begin{tikzcd}
X_{i'} \times_X Z \arrow[d] \arrow[r] & F \times_X Z \arrow[r, hook] \arrow[d] & Z \arrow[d] \\
X_{i'} \arrow[r]                      & F \arrow[r, hook]                & X          
\end{tikzcd}
\end{equation}where the morphism $X_{i'}\longrightarrow F$ is the composition $X_{i'} \longrightarrow \coprod_{i \in I} X_{i} \twoheadrightarrow F$. We note that each $X_{i'} \times_X Z$ is affine. Using Lemma \ref{stabilityspec}, it follows that each composition
\begin{equation}
X_{i'} \times_X Z \longrightarrow F \times_X Z \longrightarrow Z
\end{equation} in \eqref{415ewd} is a spectral immersion relative to $\mathcal M$. Further, by Lemma $\ref{grotop}$, the induced morphism $\coprod_{i'\in I} X_{i'} \times_X Z \longrightarrow Z$ has image $F \times_X Z$. Hence, $F \times_X Z \subseteq Z$ is Zariski open relative to $\mathcal M$. This shows that the inclusion $F \hookrightarrow X$ is a Zariski open immersion relative to $\mathcal M$.
\end{proof}

Our next result shows that spectral immersions relative to $\mathcal M$ are also Zariski open immersions relative to $\mathcal M$ in $Sh(Aff_{\mathcal C})_{\mathcal M}$.

    \begin{thm}\label{oneimpliesother}
        Let $\alpha^{op} : Spec(a) \longrightarrow Spec(b)$ be a spectral immersion relative to $\mathcal M$. Then $\alpha^{op}$ considered as a morphism $Z = Aff_{\mathcal C}(\_\_, Spec(a)) \longrightarrow Aff_{\mathcal C}(\_\_, Spec(b)) = Y$ is a Zariski open immersion relative to $\mathcal M$ in $Sh(Aff_{\mathcal C})_{\mathcal M}$.
    \end{thm}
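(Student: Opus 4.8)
The plan is to verify Definition \ref{zariskiopenforsheaves}(2) directly. Since the target $Y = Aff_{\mathcal C}(\_\_, Spec(b))$ is representable and the condition need only be tested against affine schemes, I would fix an arbitrary $X = Aff_{\mathcal C}(\_\_, Spec(c))$ together with a morphism $X \longrightarrow Y$. By the Yoneda lemma this morphism is $\gamma^{op}$ for a unique $\gamma : b \longrightarrow c$ in $Comm(\mathcal C)$ (recall that, as $\alpha^{op} : Spec(a)\longrightarrow Spec(b)$ is a spectral immersion, the underlying $\alpha : b \longrightarrow a$ is $\mathcal M$-flat and an epimorphism of finite type). The whole problem thus reduces to understanding a single pullback square of representables and the map it induces onto $X$.

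First I would compute $Z \times_Y X$. In $Aff_{\mathcal C} = Comm(\mathcal C)^{op}$ the fibre product $Spec(a)\times_{Spec(b)}Spec(c)$ exists and equals $Spec(a\otimes_b c)$, where $a\otimes_b c$ is the pushout in $Comm(\mathcal C)$ of $\alpha : b \longrightarrow a$ and $\gamma : b \longrightarrow c$; write $\alpha' : c \longrightarrow a\otimes_b c$ for the cobase change of $\alpha$ along $\gamma$. Because the Yoneda embedding preserves limits and $\mathcal M$ is subcanonical (so every representable is a sheaf and the inclusion $Sh(Aff_{\mathcal C})_{\mathcal M}\hookrightarrow PSh(Aff_{\mathcal C})$, being a right adjoint, preserves limits), the pullback $Z\times_Y X$ formed in $Sh(Aff_{\mathcal C})_{\mathcal M}$ is again representable, namely $Aff_{\mathcal C}(\_\_, Spec(a\otimes_b c))$, and the projection to $X$ is exactly $(\alpha')^{op}$.

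Next I would invoke Lemma \ref{stabilityspec}: since $\alpha$ is $\mathcal M$-flat and an epimorphism of finite type and $\alpha'$ is its pushout along $\gamma$, the map $\alpha'$ is again an $\mathcal M$-flat epimorphism of finite type, so $(\alpha')^{op}$ is a spectral immersion relative to $\mathcal M$. In particular $\alpha'$ is an epimorphism in $Comm(\mathcal C)$, whence $(\alpha')^{op}$ is a monomorphism in $Aff_{\mathcal C}$, and the two limit-preserving functors above carry this to a monomorphism $Z\times_Y X \longrightarrow X$ in $Sh(Aff_{\mathcal C})_{\mathcal M}$. Finally, viewing the singleton $\{(\alpha')^{op}\}$ as a one-element family of spectral immersions relative to $\mathcal M$ and appealing to Definition \ref{zariskiopenforsheaves}(1), the image of $(\alpha')^{op}$ --- which, $(\alpha')^{op}$ being a monomorphism, is just $Z\times_Y X$ regarded as a subsheaf of $X$ --- is Zariski open relative to $\mathcal M$ in $X$. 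This is precisely the requirement of Definition \ref{zariskiopenforsheaves}(2), so $\alpha^{op}$ is a Zariski open immersion relative to $\mathcal M$.

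I expect the step requiring the most care to be the identification of the pullback $Z\times_Y X$ with $Aff_{\mathcal C}(\_\_, Spec(a\otimes_b c))$ and of its projection with $(\alpha')^{op}$: this is where subcanonicity is genuinely used, since it guarantees that the representables involved are sheaves and hence that the pullback computed in $Sh(Aff_{\mathcal C})_{\mathcal M}$ coincides with the one in presheaves, where it can be read off from the pushout in $Comm(\mathcal C)$. Everything else is then a formal consequence of Lemma \ref{stabilityspec} together with the definitions of spectral immersion and of Zariski open relative to $\mathcal M$.
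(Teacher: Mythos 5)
Your proof is correct and follows essentially the same route as the paper's: identify $Z\times_Y X$ with the representable $Aff_{\mathcal C}(\_\_,Spec(a\otimes_b c))$, use Lemma \ref{stabilityspec} to see that the projection is a spectral immersion (hence a monomorphism, since the underlying map is an epimorphism in $Comm(\mathcal C)$ and Yoneda preserves limits), and then apply Definition \ref{zariskiopenforsheaves}(1) with the singleton family. The only difference is that you spell out a couple of points the paper leaves implicit (the Yoneda identification of the test map with some $\gamma^{op}$, and why the sheaf-level pullback agrees with the presheaf one), which is harmless.
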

\begin{proof}
     Let $X =Spec(c)$ be an affine scheme and $g : X \longrightarrow Y$ be a morphism in $Sh(Aff_{\mathcal C})_{\mathcal M}$. Using Lemma \ref{stabilityspec}, the morphism
     \begin{equation}\label{eqtuni4.17}
    Z\times_YX =   Spec(a \otimes_b c) \cong Spec(a) \times_{Spec(b)} Spec(c) \longrightarrow Spec(c)=X
     \end{equation}is a spectral immersion relative to $\mathcal M$ in $Aff_{\mathcal C}$, and in particular a monomorphism in $Aff_{\mathcal C}$. Since the Yoneda embedding $Aff_{\mathcal C} \hookrightarrow PSh(Aff_{\mathcal C})$ preserves limits, the morphism 
     \begin{equation}
       Z \times_Y X \cong Aff_{\mathcal C}(\_\_, Spec(a) \times_{Spec(b)} Spec(c)) \cong Aff_{\mathcal C}(\_\_, Spec(a \otimes_b c)) \longrightarrow Aff_{\mathcal C}(\_\_, Spec(c)) \cong X
     \end{equation}is a monomorphism in $PSh(Aff_{\mathcal C})$, and hence in $Sh(Aff_{\mathcal C})_{\mathcal M}$. Further, since the morphism in \eqref{eqtuni4.17} is a spectral immersion relative to $\mathcal M$, the subsheaf $Z \times_Y X \subseteq X$ is Zariski open relative to $\mathcal M$ by Definition \ref{zariskiopenforsheaves}(1). Hence, $\alpha^{op}$ is a Zariski open immersion relative to $\mathcal M$.
\end{proof}

We are ready to define the notion of a scheme relative to $\mathcal M$.
\begin{defn}\label{defin4.8}
(1)  Let $F \in Sh(Aff_{\mathcal C})_{\mathcal M}$. An affine Zariski $\mathcal M$-covering of $F$ is a family $\left\{X_i \longrightarrow F\right\}_{i\in I}$ of morphisms in $Sh(Aff_{\mathcal C})_{\mathcal M}$ where

\smallskip
(i) For each $i \in I$, $X_i\in Aff_{\mathcal C}$ is an affine scheme and   $X_i \longrightarrow F$ is a Zariski open immersion relative to $\mathcal M$.

\smallskip
(ii) The induced morphism $\coprod_{i \in I} X_i \longrightarrow F$ is an epimorphism in $Sh(Aff_{\mathcal C})_{\mathcal M}$.

\vspace{0.2in}

(2)    An object $F \in Sh(Aff_{\mathcal C})_{\mathcal M}$ is an $\mathcal M$-scheme if it admits an affine Zariski $\mathcal M$-covering.
\end{defn}
It is clear that every affine  $X\in Aff_{\mathcal C}$ is an $\mathcal M$-scheme. We let    $Sch(\mathcal C)_{\mathcal M}$ be the full subcategory of $Sh(Aff_{\mathcal C})_{\mathcal M}$ whose objects are   $\mathcal M$-schemes. We now prove some properties of $Sch(\mathcal C)_{\mathcal M}$.
\begin{lem}\label{lema4.5}
(1) Let $X, Y \in Aff_{\mathcal C}$ be affine schemes and $Y \longrightarrow X \longleftarrow G$ be morphisms in $Sh(Aff_{\mathcal C})_{\mathcal M}$. If $G$ is an $\mathcal M$-scheme, then $F := Y \times_X G$ in $Sh(Aff_{\mathcal C})_{\mathcal M}$ is an $\mathcal M$-scheme.

\smallskip
(2) Suppose that $F \longrightarrow F_0$ is a morphism in $Sh(Aff_{\mathcal C})_{\mathcal M}$ where $F_0 \in Sch(\mathcal C)_{\mathcal M}$. If $\{g_i : X_i \longrightarrow F_0\}_{i \in I}$ is an affine Zariski $\mathcal M$-covering of $F_0$ such that each pullback $X_i \times_{F_0} F$ is an $\mathcal M$-scheme, then $F$ is an $\mathcal M$-scheme. 
\end{lem}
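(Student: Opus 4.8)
The plan for both parts is to manufacture an affine Zariski $\mathcal M$-covering of $F$ by pulling back a given covering and invoking the stability results of Lemma \ref{lema4.2} together with the epimorphism-transport of Lemma \ref{grotop}(2). In both cases the key structural input is that pullbacks of affine schemes are affine, since the Yoneda embedding $Aff_{\mathcal C} \hookrightarrow PSh(Aff_{\mathcal C})$ preserves limits and a pullback in $Aff_{\mathcal C}$ corresponds to a pushout in $Comm(\mathcal C)$.

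For part (1), I would fix an affine Zariski $\mathcal M$-covering $\{Z_i \longrightarrow G\}_{i \in I}$ of the $\mathcal M$-scheme $G$ and let $p : F = Y \times_X G \longrightarrow G$ be the projection. Setting $W_i := F \times_G Z_i$, the pasting lemma for pullbacks gives $W_i \cong Y \times_X Z_i$, where $Z_i \longrightarrow X$ is the composite $Z_i \longrightarrow G \longrightarrow X$. Since $Y$, $X$ and $Z_i$ are affine, each $W_i$ is the affine scheme corresponding to the relevant pushout in $Comm(\mathcal C)$. Each $W_i \longrightarrow F$ is the base change of the Zariski open immersion $Z_i \longrightarrow G$ along $p$, hence a Zariski open immersion relative to $\mathcal M$ by Lemma \ref{lema4.2}. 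Finally, since $\coprod_i Z_i \longrightarrow G$ is an epimorphism, Lemma \ref{grotop}(2) applied to $p$ shows that $\coprod_i W_i \longrightarrow F$ is an epimorphism. Thus $\{W_i \longrightarrow F\}_{i \in I}$ is an affine Zariski $\mathcal M$-covering, and $F$ is an $\mathcal M$-scheme by Definition \ref{defin4.8}.

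For part (2), I would first note that each projection $\pi_i : X_i \times_{F_0} F \longrightarrow F$ is the base change of the Zariski open immersion $g_i : X_i \longrightarrow F_0$ along $F \longrightarrow F_0$, hence a Zariski open immersion relative to $\mathcal M$ by Lemma \ref{lema4.2}. By hypothesis each $X_i \times_{F_0} F$ is an $\mathcal M$-scheme, so I may choose affine Zariski $\mathcal M$-coverings $\{Y_{ij} \longrightarrow X_i \times_{F_0} F\}_{j \in J_i}$ with each $Y_{ij}$ affine. The composites $Y_{ij} \longrightarrow X_i \times_{F_0} F \xrightarrow{\pi_i} F$ are then Zariski open immersions relative to $\mathcal M$ by the closure under composition in Lemma \ref{lema4.2}. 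For the covering condition I would compose two epimorphisms: since $\coprod_i X_i \longrightarrow F_0$ is an epimorphism, Lemma \ref{grotop}(2) gives that $\coprod_i (X_i \times_{F_0} F) \longrightarrow F$ is an epimorphism, while $\coprod_{i,j} Y_{ij} \longrightarrow \coprod_i (X_i \times_{F_0} F)$ is an epimorphism as a coproduct of the epimorphisms $\coprod_j Y_{ij} \longrightarrow X_i \times_{F_0} F$. Composing these yields that $\coprod_{i,j} Y_{ij} \longrightarrow F$ is an epimorphism, so $\{Y_{ij} \longrightarrow F\}_{i \in I,\, j \in J_i}$ is the required affine Zariski $\mathcal M$-covering.

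The pullback-pasting identifications and the affineness verifications are routine. The one point deserving care is the claim that a coproduct of epimorphisms is again an epimorphism in $Sh(Aff_{\mathcal C})_{\mathcal M}$; this holds because the sheaf category is a Grothendieck topos, in which epimorphisms are stable under arbitrary coproducts. I expect this, together with keeping the double indexing straight in part (2), to be the only genuinely delicate book-keeping.
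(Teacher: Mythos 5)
Your proposal is correct and follows essentially the same route as the paper: in both parts you pull back a given affine Zariski $\mathcal M$-covering, use Lemma \ref{lema4.2} for stability under base change and composition, identify the pulled-back pieces as affine via the pasting lemma, and transport the epimorphism with Lemma \ref{grotop}(2). The only point you flag — that a coproduct of epimorphisms is an epimorphism in the sheaf topos — is used implicitly in the paper as well, so there is no substantive difference.
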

\begin{proof}
(1) Since $G$ is an $\mathcal M$-scheme, it has an affine Zariski $\mathcal M$-covering $\{h_i : X_i \longrightarrow G\}_ {i \in I}$. For each $i \in I$, we have a pullback diagram in $Sh(Aff_{\mathcal C})_{\mathcal M}$
\begin{equation}\label{dig4.19}
\begin{tikzcd}
X_i \times_X Y \cong X_i \times_G F \arrow[rr] \arrow[d] &  & F \arrow[d] \arrow[rr] &  & Y \arrow[d] \\
X_i \arrow[rr, "h_i"]                                           &  & G \arrow[rr]           &  & X          
\end{tikzcd}
\end{equation}
It is clear that $X_i \times_G F \cong X_i \times_X Y$ is affine. Since  $h_i : X_i \longrightarrow G$ is a Zariski open immersion relative to $\mathcal M$, it follows from Lemma $\ref{lema4.2}$ that the morphism $X_i \times_G F \longrightarrow F$ in \eqref{dig4.19} is a Zariski open immersion relative to $\mathcal M$. Further, by Lemma $\ref{grotop}$, the induced morphism 
\begin{equation} 
\coprod_{i \in I} X_i \times_G F \longrightarrow F
\end{equation}is an epimorphism in $Sh(Aff_{\mathcal C})_{\mathcal M}$. It follows that  $\{X_i \times_X Y \cong X_i \times_G F \longrightarrow F\}_{i \in I}$ is an affine Zariski $\mathcal M$-covering of $F$. This shows that $F$ is an $\mathcal M$-scheme.

\smallskip
(2) For each $i \in I$, we have a pullback square in $Sh(Aff_{\mathcal C})_{\mathcal M}$
\begin{equation}
\begin{tikzcd}
X_i \times_{F_0} F \arrow[rr, "g_{i}'"] \arrow[d] &  & F \arrow[d] \\
X_i \arrow[rr, "g_i"]                              &  & F_0        
\end{tikzcd}
\end{equation}By assumption, there exists an affine Zariski $\mathcal M$-covering $\{U_{i j} \longrightarrow X_i \times_{F_0} F\}_{j \in J_i}$ of $X_i \times_{F_0} F$. Since the morphism $g_i : X_i \longrightarrow F_0$ is a Zariski open immersion relative to $\mathcal M$, it follows from Lemma $\ref{lema4.2}$ that for each $j \in J_i$, the composition
\begin{equation}
 U_{ij} \longrightarrow X_i \times_{F_0} F \xrightarrow{\text{ }g_{i}'\text{ }} F
\end{equation}is a Zariski open immersion relative to $\mathcal M$. 

\smallskip
For each $i\in I$, since the morphism $\coprod_{j \in J_i} U_{i j} \longrightarrow X_i \times_{F_0} F$ induced by the affine Zariski $\mathcal M$-covering $\{U_{ij} \longrightarrow X_i \times_{F_0} F\}_{j \in J_i}$ is an epimorphism in $Sh(Aff_{\mathcal C})_{\mathcal M}$, the induced morphism
\begin{equation} \label{423dcf}
\coprod_{i \in I, j \in J_i} U_{i j} \longrightarrow \coprod_{i \in I} X_i \times_{F_0} F
\end{equation}is an epimorphism in $Sh(Aff_{\mathcal C})_{\mathcal M}$. Further, by Lemma $\ref{grotop}$, the induced morphism $\coprod_{i \in I} X_i \times_{F_0} F \xrightarrow{\text{ }\coprod_{i \in I} g_{i}'} F$ is an epimorphism in $Sh(Aff_{\mathcal C})_{\mathcal M}$.  It follows that $\{U_{ij} \longrightarrow X_i \times_{F_0} F \xrightarrow{\text{ }g_i'\text{ }} F\}_{i \in I, j \in J_i}$ is an affine Zariski $\mathcal M$-covering of $F$.  
\end{proof}
\begin{lem}
 Let $F$ be an $\mathcal M$-scheme and let $F_0 \hookrightarrow F$ be a Zariski open immersion relative to $\mathcal M$ in $Sh(Aff_{\mathcal C})_{\mathcal M}$. Then, $F_0$ is an $\mathcal M$-scheme.
\end{lem}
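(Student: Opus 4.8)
The plan is to pull back a fixed affine Zariski $\mathcal{M}$-covering of $F$ along the open immersion $F_0 \hookrightarrow F$ and to recognize the resulting pieces as $\mathcal{M}$-schemes, so that Lemma \ref{lema4.5}(2) applies. Concretely, fix an affine Zariski $\mathcal{M}$-covering $\{X_i \to F\}_{i \in I}$ of $F$, which exists since $F$ is an $\mathcal{M}$-scheme. For each $i$, since $F_0 \hookrightarrow F$ is a Zariski open immersion relative to $\mathcal{M}$ and $X_i$ is affine, Definition \ref{zariskiopenforsheaves}(2) tells us that the base change $X_i \times_F F_0 \to X_i$ is a monomorphism whose image $W_i \subseteq X_i$ is Zariski open relative to $\mathcal{M}$; being a monomorphism onto its image, $X_i \times_F F_0 \cong W_i$. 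The goal then reduces to showing that each such $W_i$ is an $\mathcal{M}$-scheme, after which Lemma \ref{lema4.5}(2), applied to the morphism $F_0 \hookrightarrow F$ together with the covering $\{X_i \to F\}_{i \in I}$ of the ambient $\mathcal{M}$-scheme $F$, yields that $F_0$ is an $\mathcal{M}$-scheme.

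The central step is therefore the following sub-claim: a subsheaf $W \subseteq X$ that is Zariski open relative to $\mathcal{M}$ in an affine scheme $X$ is itself an $\mathcal{M}$-scheme. By Definition \ref{zariskiopenforsheaves}(1) there is a family $\{Spec(a_j) \to X\}_{j}$ of spectral immersions relative to $\mathcal{M}$ whose induced morphism $\coprod_j Spec(a_j) \to X$ has image $W$; in particular each $Spec(a_j) \to X$ factors through $W \hookrightarrow X$, and $\coprod_j Spec(a_j) \to W$ is an epimorphism. By Theorem \ref{oneimpliesother} each $Spec(a_j) \to X$ is a Zariski open immersion relative to $\mathcal{M}$. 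It remains to upgrade this to the statement that the factored morphism $Spec(a_j) \to W$ is a Zariski open immersion, and this is where I expect the only real friction.

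To settle this I would use a cancellation property against the monomorphism $W \hookrightarrow X$, which is a monomorphism by Lemma \ref{zaropenimmono} and in fact itself a Zariski open immersion by Theorem \ref{zaropenim}(2). Given any affine $Y \to W$, compose with $W \hookrightarrow X$ to get $Y \to X$; since $W \hookrightarrow X$ is a monomorphism, the two fibre-product universal properties coincide and yield a canonical isomorphism $Spec(a_j) \times_W Y \cong Spec(a_j) \times_X Y$. Because $Spec(a_j) \to X$ is a Zariski open immersion, $Spec(a_j) \times_X Y \to Y$ is a monomorphism with image Zariski open relative to $\mathcal{M}$ in $Y$; transporting along the isomorphism gives the same for $Spec(a_j) \times_W Y \to Y$, so $Spec(a_j) \to W$ is a Zariski open immersion relative to $\mathcal{M}$. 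Hence $\{Spec(a_j) \to W\}_j$ is an affine Zariski $\mathcal{M}$-covering of $W$, proving the sub-claim.

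Combining these, each $X_i \times_F F_0 \cong W_i$ is an $\mathcal{M}$-scheme, and Lemma \ref{lema4.5}(2) then gives that $F_0$ is an $\mathcal{M}$-scheme. The main obstacle, as indicated, is the verification that spectral immersions into the affine $X_i$ descend to Zariski open immersions into the open subsheaf $W_i$; the remaining bookkeeping—assembling the relevant epimorphisms via Lemma \ref{grotop} and invoking stability under base change and closure under composition from Lemma \ref{lema4.2}—is routine.
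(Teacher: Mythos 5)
Your proof is correct and follows essentially the same route as the paper: both pull back the fixed affine Zariski $\mathcal M$-covering $\{X_i \to F\}$ along $F_0 \hookrightarrow F$, use Definition \ref{zariskiopenforsheaves}(1) to produce spectral immersions $U_{ij} \to X_i$ with image $F_0 \times_F X_i$, and assemble these into a covering of $F_0$. The only differences are organizational — you package the conclusion through Lemma \ref{lema4.5}(2) after isolating the sub-claim that a Zariski open subsheaf of an affine is an $\mathcal M$-scheme, whereas the paper composes $U_{ij} \to F_0 \times_F X_i \to F_0$ directly — and your fibre-product cancellation along the monomorphism $F_0 \times_F X_i \hookrightarrow X_i$ is exactly the verification the paper leaves implicit.
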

\begin{proof}
Since $F$ is an $\mathcal M$-scheme, there exists an affine Zariski $\mathcal M$-covering $\{g_i : X_i \longrightarrow F\}_{i\in I}$. For each $i \in I$, we have a pullback square in $Sh(Aff_{\mathcal C})_{\mathcal M}$
\begin{equation}
\begin{tikzcd}
F_0 \times_F X_i \arrow[rr, "p_i"] \arrow[d, "g_i'"] &  & X_i \arrow[d, "g_i"] \\
F_0 \arrow[rr, hook]                         &  & F                   
\end{tikzcd}
\end{equation}Since $F_0 \hookrightarrow F$ is a Zariski open immersion relative to $\mathcal M$, the morphism $p_i : F_0 \times_F X_i \longrightarrow X_i$ is a monomorphism whose image is Zariski open relative to $\mathcal M$ in $Sh(Aff_{\mathcal C})_{\mathcal M}$. By Definition \ref{zariskiopenforsheaves}(1), there is a family 
\begin{equation}
    \{h_{ij} : U_{ij} \longrightarrow X_i \}_{j\in J_i}
\end{equation}of spectral immersions relative to $\mathcal M$ where each $U_{ij} \in Aff_{\mathcal C}$ is affine and such that  $\coprod_{j \in J_i} U_{i j} \xrightarrow{\text{ }\coprod_{j \in J_i} h_{i j}\text{ }} X_i$ has image $F_0 \times_F X_i$ in $Sh(Aff_{\mathcal C})_{\mathcal M}$.  It follows that the family $\{U_{i j} \longrightarrow 
F_0\times_FX_i\longrightarrow  F_0 \}_{i \in I, j \in J_i}$ is an affine Zariski $\mathcal M$-covering of $F_0$. Hence, $F_0$ is an $\mathcal M$-scheme.  

\end{proof}
\begin{lem}\label{coprojzar}
    Let $\{F_i : i \in I\}$ be a family of objects in $Sh(Aff_{\mathcal C})_{\mathcal M}$ and let $F := \coprod_{i \in i} F_i$. Then, for each $i' \in I$, the canonical morphism $F_{i'} \longrightarrow F$ is a Zariski open immersion relative to $\mathcal M$.
\end{lem}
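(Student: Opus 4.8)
The plan is to verify Definition \ref{zariskiopenforsheaves}(2) directly for the coprojection $\iota_{i'} : F_{i'} \longrightarrow F = \coprod_{i \in I} F_i$. So I fix an affine scheme $X = Spec(a) \in Aff_{\mathcal C}$ together with a morphism $g : X \longrightarrow F$, write $X_i := F_i \times_F X$, and must show that the projection $X_{i'} = F_{i'} \times_F X \longrightarrow X$ is a monomorphism whose image is Zariski open relative to $\mathcal M$ in $X$. The monomorphism part is immediate from topos theory: $Sh(Aff_{\mathcal C})_{\mathcal M}$ is a Grothendieck topos, so its coproducts are disjoint, whence each coprojection $\iota_{i'}$ is a monomorphism; as monomorphisms are stable under pullback, $X_{i'} \longrightarrow X$ is a monomorphism. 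The content of the lemma is therefore the identification of the image as a Zariski open.

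First I would reinterpret $g$ via the Yoneda lemma (valid since $\mathcal M$ is subcanonical) as a section $s \in F(Spec(a))$. Since $F = \coprod_{i \in I} F_i$ is the sheafification, for the spectral $\mathcal M$-topology, of the presheaf coproduct $P$ with $P(Spec(b)) = \bigsqcup_{i \in I} F_i(Spec(b))$, the unit $P \longrightarrow F$ is locally surjective (see \cite[$\S$ 3.5]{MM}). Hence $s$ lifts locally to $P$: there is a spectral $\mathcal M$-cover $\{\gamma_k^{op} : Spec(c_k) \longrightarrow Spec(a)\}_{k \in K}$ and, for each $k$, an index $i_k \in I$ together with $s_k \in F_{i_k}(Spec(c_k))$ whose image in $F(Spec(c_k))$ is $s|_{Spec(c_k)}$. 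Reading this back through Yoneda, the composite $g \circ \gamma_k^{op}$ factors through $\iota_{i_k} : F_{i_k} \longrightarrow F$, so by the universal property of the pullback each $\gamma_k^{op}$ factors through the monomorphism $X_{i_k} \hookrightarrow X$. Crucially, because the members of a spectral $\mathcal M$-cover are by definition spectral immersions relative to $\mathcal M$, every $\gamma_k^{op}$ is such an immersion.

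It then remains to show that the subfamily $\{\gamma_k^{op} : Spec(c_k) \longrightarrow X\}_{k : i_k = i'}$ realizes $X_{i'}$ as its image. Each such $\gamma_k^{op}$ factors through $X_{i'}$, so the image is contained in $X_{i'}$; for the reverse inclusion I would pull back the epimorphism $\coprod_{k \in K} Spec(c_k) \twoheadrightarrow X$, furnished by Lemma \ref{coverinducepiintopos}, along $X_{i'} \hookrightarrow X$. By Lemma \ref{grotop} this yields an epimorphism $\coprod_{k \in K} \left( Spec(c_k) \times_X X_{i'}\right) \twoheadrightarrow X_{i'}$. Now disjointness of the coproduct gives $X_{i_k} \times_X X_{i'} \cong (F_{i_k} \times_F F_{i'}) \times_F X \cong \emptyset$ when $i_k \neq i'$, which by pullback pasting and strictness of the initial object forces $Spec(c_k) \times_X X_{i'} \cong \emptyset$ in that case; when $i_k = i'$, the fact that $\gamma_k^{op}$ factors through the monomorphism $X_{i'}$ gives $Spec(c_k) \times_X X_{i'} \cong Spec(c_k)$. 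Thus the epimorphism collapses to $\coprod_{k : i_k = i'} Spec(c_k) \twoheadrightarrow X_{i'}$, exhibiting $X_{i'}$ as the image of a family of spectral immersions relative to $\mathcal M$, i.e. as a Zariski open relative to $\mathcal M$ in $X$. This completes the verification and hence the proof.

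The main obstacle is conceptual rather than computational: a morphism $g$ from an affine scheme into $\coprod_{i} F_i$ need not factor through a single summand, since affine schemes are not ``connected'' in this generality, so one cannot argue summand by summand. The device that repairs this is the local lifting across sheafification \emph{for the spectral $\mathcal M$-topology}, which produces a cover whose members are genuine spectral immersions and on which $s$ becomes componentwise; once this is in place, the disjointness computation of the pullbacks $Spec(c_k) \times_X X_{i'}$ is routine.
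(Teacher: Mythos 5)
Your proof is correct and follows essentially the same route as the paper's: Yoneda plus the local-lifting description of sheafification produces a spectral $\mathcal M$-cover whose members factor through individual summands, and the image of the subfamily indexed by $i'$ is then identified with $F_{i'}\times_F X$. The only cosmetic difference is that you identify this image by pulling the cover back along $X_{i'}\hookrightarrow X$ and invoking disjointness of coproducts, whereas the paper writes $X\cong\coprod_{i}(F_i\times_F X)$ and reads off the same identification from a pullback square via Lemma \ref{grotop}; both are valid.
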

\begin{proof}
    Let $X = Spec (a) \in Aff_{\mathcal C}$ be an affine and
    \begin{equation}\label{eqtuni4.34} 
 f:   X \longrightarrow F = \coprod_{i \in I} F_i
    \end{equation}be a morphism in $Sh(Aff_{\mathcal C})_{\mathcal M}$. We have a pullback diagram in $Sh(Aff_{\mathcal C})_{\mathcal M}$
    \begin{equation} 
\xymatrix{
F_{i'} \times_F X \ar[rr] \ar[d] &  & X \ar[d]^f \\
F_{i'} \ar[rr]                      &  & F       \\   
}
    \end{equation}
    It is clear that $F_{i'} \longrightarrow F$ is a monomorphism in $Sh(Aff_{\mathcal C})_{\mathcal M}$. Hence,   $F_{i'} \times_F X \longrightarrow X$ is a monomorphism in $Sh(Aff_{\mathcal C})_{\mathcal M}$. We claim that its image is Zariski open relative to $\mathcal M$.
    
    \smallskip
    Under Yoneda Lemma, the morphism $f$  in \eqref{eqtuni4.34} corresponds to an element of $F(Spec(a))$. We note that the coproduct $F = \coprod_{i \in I} F_i$ in $Sh(Aff_{\mathcal C})_{\mathcal M}$ is the sheafification of the coproduct of the family $\{F_i\}_{i\in I}$ in $PSh(Aff_{\mathcal C})$. Using the   description of sheafification as a filtered colimit (see for instance, \cite[$\S$ 3.5]{MM}) there exists a spectral $\mathcal M$-cover $\{\alpha_k^{op} : X_k \longrightarrow X\}_{k \in K}$ such that for each $k\in K$, there is an element
    $u(k)\in I$   along with a  factorization
\begin{equation}
\begin{CD}
X_k @>\alpha_k^{op}>>  X   \\
@Vf_kVV @VVfV\\
F_{u(k)} @>>> F\\          
\end{CD}
\end{equation}
It follows that there exists  $g_k : X_k \longrightarrow F_{u(k)} \times_F X$ such that   the following diagram commutes for each $k\in K$
\begin{equation}
\begin{tikzcd}
       & X_k \arrow[d, "g_k", dashed] \arrow[ld, "f_k"'] \arrow[rd, "\alpha_k^{op}"] &   \\
F_{u(k)} & F_{u(k)} \times_F X \arrow[r] \arrow[l]       & X
\end{tikzcd}
\end{equation}Using Lemma \ref{grotop}, we have $X \cong F \times_F X \cong \coprod_{i \in I} (F_{i} \times_F X)$. Now since $\{\alpha_k^{op} : X_k \longrightarrow X\}_{k \in K}$ is a spectral $\mathcal M$-cover, it follows from Lemma \ref{coverinducepiintopos} that the induced morphism 
\begin{equation}
    \coprod_{i \in I} \coprod_{u(k)=i} X_k \cong \coprod_{k \in K} X_k \xrightarrow{\quad\coprod_{k \in K} \alpha_k^{op}\text{ }=\text{ }\coprod_{i \in I} \coprod_{u(k)=i} g_k\quad} X \cong \coprod_{i \in I} (F_i \times_F X)
\end{equation}is an epimorphism in $Sh(Aff_{\mathcal C})_{\mathcal M}$.  We note that the following is a pullback square in $Sh(Aff_{\mathcal C})_{\mathcal M}$
\begin{equation}
\begin{tikzcd}
\coprod_{u(k)=i'} X_k \arrow[rrr, "\coprod_{u(k)=i'} g_k"] \arrow[d]        &  &  & F_{i'} \times_F X \arrow[d]        \\
\coprod_{i \in I} \coprod_{u(k)=i} X_k \arrow[rrr, "\coprod_{i \in I} \coprod_{u(k)=i} g_k"] &  &  & \coprod_{i \in I} (F_i \times_F X) \cong X
\end{tikzcd}
\end{equation}Using Lemma \ref{grotop}, it follows that the image of $\coprod_{u(k)=i'} \alpha_k^{op} : \coprod_{u(k)=i'} X_{k} \longrightarrow X$ is $F_{i'} \times_F X$. This completes the proof.
\end{proof}

\begin{thm}\label{pullbacksofschemes}
    The subcategory $Sch(\mathcal C)_{\mathcal M}$ of $Sh(Aff_{\mathcal C})_{\mathcal M}$ is closed under

    \smallskip
    (1) coproducts.
    
    \smallskip
    (2) pullbacks.
\end{thm}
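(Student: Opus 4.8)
The plan is to treat the two parts separately, in each case using the devissage supplied by Lemma \ref{lema4.5} together with the stability properties of Zariski open immersions relative to $\mathcal M$.

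For part (1), let $\{F_i\}_{i\in I}$ be $\mathcal M$-schemes and set $F=\coprod_{i\in I}F_i$. Each $F_i$ admits an affine Zariski $\mathcal M$-covering $\{X_{ij}\to F_i\}_{j\in J_i}$. By Lemma \ref{coprojzar} each coprojection $F_i\to F$ is a Zariski open immersion relative to $\mathcal M$, so by the composition stability in Lemma \ref{lema4.2} every composite $X_{ij}\to F_i\to F$ is again a Zariski open immersion relative to $\mathcal M$. It then remains to check that the induced morphism $\coprod_{i,j}X_{ij}=\coprod_i\big(\coprod_j X_{ij}\big)\to\coprod_i F_i=F$ is an epimorphism; this holds because each $\coprod_j X_{ij}\to F_i$ is an epimorphism and coproducts of epimorphisms are epimorphisms in the topos $Sh(Aff_{\mathcal C})_{\mathcal M}$. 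Hence $\{X_{ij}\to F\}_{i,j}$ is an affine Zariski $\mathcal M$-covering, so $F$ is an $\mathcal M$-scheme.

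For part (2), consider $G\xrightarrow{u}H\xleftarrow{v}G'$ in $Sch(\mathcal C)_{\mathcal M}$. I would first record the \emph{affine-base case}: if $W$ is affine and $P,Q$ are $\mathcal M$-schemes over $W$, then $P\times_W Q$ is an $\mathcal M$-scheme. Indeed, choosing an affine Zariski $\mathcal M$-covering $\{X_i\to Q\}$ of $Q$ and applying Lemma \ref{lema4.5}(2) to the projection $P\times_W Q\to Q$ reduces this to showing that each $X_i\times_Q(P\times_W Q)\cong P\times_W X_i$ is an $\mathcal M$-scheme; but $X_i$ and $W$ are affine and $P$ is an $\mathcal M$-scheme, so this is precisely Lemma \ref{lema4.5}(1). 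Now for a general base $H$, I would fix an affine Zariski $\mathcal M$-covering $\{W_k\to H\}_{k\in K}$ and apply Lemma \ref{lema4.5}(2) to the structure morphism $G\times_H G'\to H$, so that it suffices to show each $W_k\times_H(G\times_H G')$ is an $\mathcal M$-scheme. Pasting pullback squares yields the canonical isomorphism $W_k\times_H(G\times_H G')\cong(W_k\times_H G)\times_{W_k}(W_k\times_H G')$, a fibre product now taken over the affine $W_k$. Since $W_k\to H$ is a Zariski open immersion relative to $\mathcal M$, its base changes $W_k\times_H G\to G$ and $W_k\times_H G'\to G'$ are again Zariski open immersions relative to $\mathcal M$ by Lemma \ref{lema4.2}; being Zariski open subobjects of the $\mathcal M$-schemes $G$ and $G'$, they are themselves $\mathcal M$-schemes by the lemma that a Zariski open immersion into an $\mathcal M$-scheme is an $\mathcal M$-scheme. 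Applying the affine-base case to these two $\mathcal M$-schemes over $W_k$ shows that $(W_k\times_H G)\times_{W_k}(W_k\times_H G')$ is an $\mathcal M$-scheme, which finishes the proof.

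The main obstacle is the reduction of the base $H$ from an arbitrary $\mathcal M$-scheme to an affine: the device that makes it work is the rewriting $W_k\times_H(G\times_H G')\cong(W_k\times_H G)\times_{W_k}(W_k\times_H G')$ combined with the observation that pulling back the open immersions $W_k\to H$ places $W_k\times_H G$ and $W_k\times_H G'$ inside $G$ and $G'$ as Zariski opens, hence as schemes over the affine $W_k$. One should still verify the naturality and compatibility in the iterated fibre-product isomorphism and confirm that $\{W_k\to H\}$ is an affine Zariski $\mathcal M$-covering so that Lemma \ref{lema4.5}(2) applies to the structure morphism, but these are routine once the covers are in place.
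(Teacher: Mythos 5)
Your proof is correct. Part (1) is essentially identical to the paper's argument: cover each $F_i$, push the coverings forward along the coprojections using Lemma \ref{coprojzar} and Lemma \ref{lema4.2}, and note that a coproduct of epimorphisms is an epimorphism.

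For part (2) you take a genuinely different route through the same devissage lemma. The paper first applies Lemma \ref{lema4.5}(2) twice to the two ``legs'' $F$ and $G$, reducing to the case where both are affine over a general $\mathcal M$-scheme base $H$; it then covers $H$ by affines $Z_k$, produces (via Proposition \ref{oneimpliesother}, Lemma \ref{grotop} and families of spectral immersions with image $Z_k\times_H X$, resp.\ $Z_k\times_H Y$) affine Zariski $\mathcal M$-coverings of $X$ and $Y$ adapted to that cover, and finally uses that $Z_k\to H$ is a monomorphism to identify $U_{ki'}\times_H V_{kj'}$ with the affine $U_{ki'}\times_{Z_k}V_{kj'}$. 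You instead reduce the \emph{base} first: applying Lemma \ref{lema4.5}(2) to $G\times_H G'\to H$ along an affine covering $\{W_k\to H\}$ and rewriting $W_k\times_H(G\times_H G')\cong(W_k\times_H G)\times_{W_k}(W_k\times_H G')$ puts everything over an affine, after which the base-changed opens $W_k\times_H G$ and $W_k\times_H G'$ are $\mathcal M$-schemes by the lemma on Zariski open immersions into $\mathcal M$-schemes, and your affine-base case bottoms out directly at Lemma \ref{lema4.5}(1). Your ordering avoids the paper's bookkeeping with the adapted spectral-immersion families and the monomorphism trick, at the cost of invoking the open-subscheme lemma; both arguments are complete, and the routine verifications you flag (the pasting isomorphism for iterated fibre products, and that $\{W_k\to H\}$ is an affine Zariski $\mathcal M$-covering) indeed go through without difficulty.
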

\begin{proof}
   (1) Let $\{F_i\}_{i\in I}$ be a family of $\mathcal M$-schemes. We need to show that the coproduct $F := \coprod_{i \in I} F_i$ in $Sh(Aff_{\mathcal C})_{\mathcal M}$ is an $\mathcal M$-scheme. For each $i \in I$, there exists an affine Zariski $\mathcal M$-covering $\{U_{ij} \longrightarrow F_i\}_{j \in J_i}$. Since the induced morphism $\coprod_{j \in J_i} U_{ij} \longrightarrow F_i$ is an epimorphism in $Sh(Aff_{\mathcal C})_{\mathcal M}$, the induced morphism
   \begin{equation} 
 \coprod_{i \in I}  \coprod_{j\in J_i} U_{ij} \longrightarrow F = \coprod_{i \in I} F_i
   \end{equation}is an epimorphism in $Sh(Aff_{\mathcal C})_{\mathcal M}$. Using Lemma \ref{coprojzar} and Lemma \ref{lema4.2}, it follows that $\{U_{ij} \longrightarrow F_i \longrightarrow F\}_{i \in I, j \in J_i}$ is an affine Zariski $\mathcal M$-covering for $F$. Hence, $F$ is an $\mathcal M$-scheme.

\medskip
(2) Let $F \xrightarrow{\text{ }f\text{ }} H \xleftarrow{\text{ }g\text{ }} G$ be morphisms in $Sch(\mathcal C)_{\mathcal M}$. We claim that $F \times_H G \in Sh(Aff_{\mathcal C})_{\mathcal M}$ is an $\mathcal M$-scheme. Since $F$ and $G$ are $\mathcal M$-schemes, there exist affine Zariski $\mathcal M$-coverings $\{X_i \longrightarrow F\}_{i \in I}$ and $\{Y_j \longrightarrow G\}_{j \in J}$. For each $i \in I$ and $j \in J$, we consider the following commutative diagram in $Sh(Aff_{\mathcal C})_{\mathcal M}$
\begin{equation}\label{diagrm4.2}   
\begin{tikzcd}
X_i \times_H Y_j \cong (X_i \times_H G) \times_G Y_j \arrow[d] \arrow[rr] &                                  & Y_j \arrow[d] \\
X_i \times_H G \cong X_i \times_F (F \times_H G) \arrow[r] \arrow[d]                 & F \times_H G \arrow[r, "f'"] \arrow[d, "g'"] & G \arrow[d, "g"]   \\
X_i \arrow[r]                                      & F \arrow[r, "f"]                      & H            
\end{tikzcd}
\end{equation} Using Lemma $\ref{lema4.5}(2)$, to show that $F \times_H G$ is an $\mathcal M$-scheme, it suffices to show that each $X_i \times_H G$ is an $\mathcal M$-scheme. By a further application of Lemma $\ref{lema4.5}(2)$ to the composition
\begin{equation} 
X_i \times_H G \longrightarrow F \times_H G \longrightarrow G
\end{equation}in \ref{diagrm4.2}, we see that it suffices to show that each $(X_i \times_H G) \times_G Y_j \cong X_i \times_H Y_j$ is an $\mathcal M$-scheme. Hence, we may assume that $F=X$ and $G=Y$ are affine. We will show that the pullback in $Sh(Aff_{\mathcal C})_{\mathcal M}$ of the diagram $X \xrightarrow{\text{ }f\text{ }} H \xleftarrow{\text{ }g\text{ }} Y$ is an $\mathcal M$-scheme.

\smallskip
Let $\{h_k : Z_k \longrightarrow H\}_{k \in K}$ be an affine Zariski $\mathcal M$-covering of the $\mathcal M$-scheme $H$. For each $k \in K$, we consider the following pullback diagram in $Sh(Aff_{\mathcal C})_{\mathcal M}$
\begin{equation}
\begin{tikzcd}
Z_k \times_H X \arrow[d, "h^X_k"'] \arrow[rr, "f_k"] &  & Z_k \arrow[d, "h_k"] &  & Z_k \times_H Y \arrow[d, "h^Y_k"] \arrow[ll, "g_k"'] \\
X \arrow[rr, "f"]                                    &  & H                    &  & Y \arrow[ll, "g"']                                  
\end{tikzcd}
\end{equation}Since $h_k : Z_k \longrightarrow H$ is a Zariski open immersion relative to $\mathcal M$, we see that  $h_k^X : Z_k \times_H X \longrightarrow X$ and $h_k^Y : Z_k \times_H Y \longrightarrow Y$ are monomorphisms whose images are Zariski open relative to $\mathcal M$. By definition, there exists a family 
\begin{equation}
\{U_{ki} \longrightarrow X\}_{i \in I_k}\qquad (\text{resp. }\{V_{kj} \longrightarrow Y\}_{j \in J_k})
\end{equation}of spectral immersions relative to $\mathcal M$ such that the image of the canonical morphism $\coprod_{i \in I_k} U_{ki} \longrightarrow X$ (resp. $\coprod_{j \in J_k} V_{kj} \longrightarrow Y$) is $Z_k \times_H X$ (resp. $Z_k \times_H Y$). Using Lemma $\ref{grotop}$, it follows that the induced morphisms
\begin{equation}
  \coprod_{k \in K, i \in I_k} U_{ki} \longrightarrow \coprod_{k \in K} Z_k \times_H X \xrightarrow{\text{ }\coprod_{k \in K} h_k^X\text{ }} X\quad\text{and}\quad\coprod_{k \in K, j \in J_k} V_{kj} \longrightarrow \coprod_{k \in K} Z_k \times_H Y \xrightarrow{\text{ }\coprod_{k \in K} h_k^Y\text{ }} Y
\end{equation}are epimorphisms in $Sh(Aff_{\mathcal C})_{\mathcal M}$. Hence using Proposition \ref{oneimpliesother}, $\{U_{ki} \longrightarrow X\}_{k \in K, i \in I_k}$ (resp. $\{V_{kj} \longrightarrow Y\}_{k \in K, j \in J_k}$) is an affine Zariski $\mathcal M$-covering of $X$ (resp. $Y$). Further, for each $k \in K, i' \in I_k$ and $j' \in J_k$, we have a commutative diagram
\begin{equation}\label{438uy}
\begin{tikzcd}
{U_{ki'}} \arrow[d] \arrow[r] & {\coprod_{i \in I_k}U_{ki}} \arrow[r, two heads] & Z_k \times_H X \arrow[r, "f_k"] & Z_k \arrow[d, "h_k"] & Z_k \times_H Y \arrow[l, "g_k"'] & {\coprod_{j \in J_k} V_{kj}} \arrow[l, two heads] & {V_{kj'}} \arrow[l] \arrow[d] \\
X \arrow[rrr, "f"]              &                                         &                          & H             &                          &                                          & Y \arrow[lll, "g"']            
\end{tikzcd}
\end{equation} We now consider the following commutative diagram in $Sh(Aff_{\mathcal C})_{\mathcal M}$
\begin{equation}
\begin{tikzcd}
{U_{ki'} \times_H V_{kj'} \cong (U_{ki'} \times_H Y) \times_Y V_{kj'}} \arrow[rr] \arrow[d] &                                  & {V_{kj'}} \arrow[d] \arrow[rddd, shift left] &                      \\
{U_{ki'} \times_H Y \cong U_{ki'} \times_X (X \times_H Y)} \arrow[r] \arrow[d]                      & X \times_H Y \arrow[r, "f'"] \arrow[d, "g'"] & Y \arrow[d, "g"]                                   &                      \\
{U_{ki'}} \arrow[r] \arrow[rrrd, shift right]                 & X \arrow[r, "f"]                      & H                                             &                      \\
                                                               &                                  &                                               & Z_k \arrow[lu, "h_k"]
\end{tikzcd}
\end{equation}where the morphism $U_{ki'} \longrightarrow Z_k$ (resp. $V_{kj'} \longrightarrow Z_k$) is the composition
\begin{equation}
    U_{ki'} \longrightarrow \coprod_{i \in I_k} U_{ki} \twoheadrightarrow Z_k \times_H X \xrightarrow{\text{ }f_k\text{ }} Z_k\qquad(\text{ resp. }V_{kj'} \longrightarrow \coprod_{j \in J_k} V_{kj} \twoheadrightarrow Z_k \times_H Y \xrightarrow{\text{ }g_k\text{ }} Z_k)
\end{equation} from \eqref{438uy}. Using Lemma \ref{lema4.5}(2), to show that $X \times_H Y$ is an $\mathcal M$-scheme, it suffices to show that each $(U_{ki'} \times_H Y) \times_Y V_{kj'} \cong U_{ki'} \times_H V_{kj'}$ is an $\mathcal M$-scheme. Using Lemma \eqref{zaropenimmono}, we note that $h_k : Z_k \longrightarrow H$ is a monomorphism in $Sh(Aff_{\mathcal C})_{\mathcal M}$. Hence, $U_{ki'} \times_H V_{kj'} \cong U_{ki'} \times_{Z_k} V_{kj'}$, which is an affine scheme. This completes the proof.
\end{proof}
\begin{lem}\label{factoexplicit}
    Let $Y = Spec(a) \in Aff_{\mathcal C}$ be an affine, $F \in Sh(Aff_{\mathcal C})_{\mathcal M}$ and $\{H_i\}_{i \in I}$ be a family of objects in $Sh(Aff_{\mathcal C})_{\mathcal M}$ with $H := \coprod_{i \in I} H_i \in Sh(Aff_{\mathcal C})_{\mathcal M}$. Let $f : H \longrightarrow F$ be an epimorphism and $g : Y \longrightarrow F$ be a morphism in $Sh(Aff_{\mathcal C})_{\mathcal M}$. Then, there exists a family $\{\alpha_k^{op} : Y_k \longrightarrow Y\}_{k \in K}$ of spectral immersions relative to $\mathcal M$ such that 
    
\smallskip
(i) the morphism $\coprod_{k \in K} \alpha_k^{op} : \coprod_{k \in K} Y_k \longrightarrow Y$ is an epimorphism in $Sh(Aff_{\mathcal C})_{\mathcal M}$.

\smallskip
(ii) for each $k \in K$, there is an element $w(k) \in I$ along with a factorization
    \begin{equation}
\begin{tikzcd}
Y_k \arrow[rr, "\alpha_k^{op}"] \arrow[d, dashed] &                                       & Y \arrow[d, "g"] \\
H_{w(k)} \arrow[r]               & H = \coprod_{i \in I} H_i \arrow[r, "f"'] & F               
\end{tikzcd}
    \end{equation}
\end{lem}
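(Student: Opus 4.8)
The plan is to construct the family $\{Y_k \to Y\}_{k \in K}$ by two successive local refinements of the section of $F$ determined by $g$, the essential input being that $H = \coprod_{i \in I} H_i$ is the sheafification of the corresponding presheaf coproduct. First, by the Yoneda lemma the morphism $g : Y \longrightarrow F$ corresponds to an element $s \in F(Spec(a))$. Since $Sh(Aff_{\mathcal C})_{\mathcal M}$ is a Grothendieck topos, the epimorphism $f : H \longrightarrow F$ is locally surjective: applied to $s$, this produces a spectral $\mathcal M$-cover $\{\beta_l^{op} : W_l \longrightarrow Y\}_{l \in L}$ together with sections $t_l \in H(W_l)$ such that $f_{W_l}(t_l) = F(\beta_l^{op})(s)$ for every $l \in L$ (see \cite{MM}).

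Next I would descend from the sheaf $H$ to the individual summands. Write $P := \coprod_{i \in I} H_i$ for the coproduct taken in $PSh(Aff_{\mathcal C})$, so that $H \cong P^{++}_{\mathcal M}$. The unit $P \longrightarrow H$ of the sheafification is locally surjective, since sheafification is a local isomorphism obtained from filtered colimits (see \cite[$\S$ 3.5]{MM}). Hence, for each $l \in L$, after refining $W_l$ by a further spectral $\mathcal M$-cover $\{\gamma_{lm}^{op} : W_{lm} \longrightarrow W_l\}_{m \in M_l}$, the restriction $t_l|_{W_{lm}} \in H(W_{lm})$ is the image under the unit of an element $\zeta_{lm} \in P(W_{lm})$. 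The crucial point is that colimits in $PSh(Aff_{\mathcal C})$ are computed objectwise, so that $P(W_{lm}) = \coprod_{i \in I} H_i(W_{lm})$ is a disjoint union of sets; consequently $\zeta_{lm}$ lies in a single summand $H_{w(lm)}(W_{lm})$ for a uniquely determined index $w(lm) \in I$, and by Yoneda it corresponds to a morphism $W_{lm} \longrightarrow H_{w(lm)}$.

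To conclude, set $K := \{(l, m) : l \in L, m \in M_l\}$, and for $k = (l,m)$ put $Y_k := W_{lm}$, $w(k) := w(lm)$ and $\alpha_k^{op} := \beta_l^{op} \circ \gamma_{lm}^{op} : W_{lm} \longrightarrow Y$. Each $\alpha_k^{op}$ is a composite of spectral immersions relative to $\mathcal M$ and is therefore itself one by Lemma \ref{stabilityspec}; moreover the family $\{\alpha_k^{op}\}_{k \in K}$ is a composite of spectral $\mathcal M$-covers, hence a spectral $\mathcal M$-cover by Proposition \ref{checkfortopology}(2), so that $\coprod_{k} Y_k \longrightarrow Y$ is an epimorphism by Lemma \ref{coverinducepiintopos}; this gives (i). For (ii), the sheaf coprojection $H_{w(k)} \longrightarrow H$ carries $\zeta_{lm}$ to $t_l|_{W_{lm}}$, so the composite $Y_k \longrightarrow H_{w(k)} \longrightarrow H \xrightarrow{\text{ }f\text{ }} F$ corresponds under Yoneda to $f_{W_{lm}}(t_l|_{W_{lm}}) = F(\gamma_{lm}^{op})(f_{W_l}(t_l)) = F(\alpha_k^{op})(s)$, which is precisely the element representing $g \circ \alpha_k^{op}$. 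Hence the square in (ii) commutes.

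I expect the second refinement to be the main obstacle: passing from a section of the \emph{sheaf} $H$ to an honest element of one of the summands $H_i$ relies on local surjectivity of the sheafification unit and on the disjoint-union (objectwise) computation of the presheaf coproduct, and one must ensure that the two successive covers compose into a single spectral $\mathcal M$-cover. The stability of spectral immersions under composition from Lemma \ref{stabilityspec}, together with the transitivity of the topology verified in Proposition \ref{checkfortopology}(2), is exactly what makes this assembly legitimate.
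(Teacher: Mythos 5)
Your proof is correct, and the first half coincides with the paper's: both arguments begin by translating $g$ into an element of $F(Spec(a))$ and invoking local surjectivity of the epimorphism $f$ (via \cite{MM}) to produce a spectral $\mathcal M$-cover $\{V_j \longrightarrow Y\}$ over which $g$ factors through $H$. The two proofs diverge in how they descend from $H = \coprod_i H_i$ to an individual summand. The paper pulls back each coprojection $H_i \longrightarrow H$ along $V_j \longrightarrow H$ and appeals to Lemma \ref{coprojzar}, which asserts that these coprojections are Zariski open immersions relative to $\mathcal M$; the monomorphisms $H_i \times_H V_j \longrightarrow V_j$ then have Zariski open images, so they are dominated by families of spectral immersions, and Lemma \ref{grotop} guarantees that these families jointly cover $V_j$ because $\coprod_i (H_i \times_H V_j) \cong V_j$. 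You instead work directly with the sheafification unit $P \longrightarrow P^{++}_{\mathcal M} = H$ of the presheaf coproduct: its local surjectivity lets you refine each $V_j$ by a further spectral $\mathcal M$-cover on which the section of $H$ lifts to the objectwise disjoint union $P(W_{lm}) = \coprod_i H_i(W_{lm})$, hence lands in a single summand. Your route is more elementary and self-contained in that it bypasses the Zariski-open formalism entirely, essentially inlining the filtered-colimit argument that the paper has packaged into Lemma \ref{coprojzar}; the paper's route buys reusability, since Lemma \ref{coprojzar} is needed elsewhere (e.g.\ in Proposition \ref{pullbacksofschemes}) and the present lemma then follows by assembling already-established stability statements. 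Your final assembly of the two covers via closure of spectral immersions under composition (Lemma \ref{stabilityspec}), transitivity of the spectral $\mathcal M$-topology (Proposition \ref{checkfortopology}) and Lemma \ref{coverinducepiintopos} is sound and matches what the paper needs in substance.
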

\begin{proof}
  Applying Yoneda Lemma, the morphism $g$ corresponds to an element of $F(Spec(a))$. Since $f : H \longrightarrow F$ is an epimorphism, it follows from \cite[$\S$ 3.7, Corollary 5]{MM} that there is a spectral $\mathcal M$-cover $\{\beta_j^{op} : V_j \longrightarrow Y\}_{j \in J}$ such that for each $j \in J$, there is a factorization
    \begin{equation}
\begin{tikzcd}
V_j \arrow[r, "\beta_j^{op}"] \arrow[d, "g_j"', dashed] & Y \arrow[d, "g"] \\
\coprod_{i \in I} H_i = H \arrow[r, "f"]                     & F          
\end{tikzcd}
    \end{equation}For each $i \in I$ and $j \in J$, we consider the following pullback square in $Sh(Aff_{\mathcal C})_{\mathcal M}$
\begin{equation}\label{digirm4.43}
\begin{tikzcd}
H_i \times_H V_j \arrow[rr] \arrow[d] &  & V_j \arrow[d, "g_j"]  \\
H_i \arrow[rr]                                              &  & H
\end{tikzcd}
\end{equation}
Using Lemma \ref{coprojzar}, we note that the morphism $H_i \times_H V_j \longrightarrow V_j$ in \eqref{digirm4.43} is a monomorphism whose image is Zariski open relative to $\mathcal M$ in $Sh(Aff_{\mathcal C})_{\mathcal M}$. For $i\in I$, $j\in J$, by choosing spectral immersions $\{Y_k^{ij}\longrightarrow V_j\}_{k\in K_{ij}}$  whose image is the same as that of $H_i \times_H V_j \longrightarrow V_j$, the result now follows from Lemma \ref{stabilityspec}, Lemma \ref{coverinducepiintopos} and Lemma \ref{grotop}.
\end{proof}
The next proposition gives a characterization of the subcategory $Sch(\mathcal C)_{\mathcal M}$ of $Sh(Aff_{\mathcal C})_{\mathcal M}$ in terms of quotients.
\begin{Thm}\label{equivalencerel}
An object $F \in Sh(Aff_{\mathcal C})_{\mathcal M}$ is an $\mathcal M$-scheme if and only if there exists a family $\{U_i\}_{i \in I}$ of affines and an equivalence relation $R \hookrightarrow H \times H$ on $H := \coprod_{i \in I} U_i$ in $Sh(Aff_{\mathcal C})_{\mathcal M}$ such that

\smallskip
(i) For any $i, j \in I$, the composition $R_{i, j} := R \times_{H \times H} (U_i \times U_j) \xrightarrow{\text{ }r_{i, j}\text{ }} U_i \times U_j \longrightarrow U_i$ is a Zariski open immersion relative to $\mathcal M$.

\smallskip
(ii) For each $i \in I$, the subobject $R_{i, i} \hookrightarrow U_i \times U_i$ is the image of the diagonal morphism $U_i \xrightarrow{\text{ }(1_{U_i}, 1_{U_i})\text{ }} U_i \times U_i$.

\smallskip
(iii) $F \cong H/R$, i.e
\begin{equation}\label{eqt4.5}
    F = Coeq\left(
\begin{tikzcd}
R \arrow[r, hook] & H \times H \arrow[r, "\pi_1", shift left=2] \arrow[r, "\pi_2"', shift right=2] & H
\end{tikzcd} \right)
\end{equation}in $Sh(Aff_{\mathcal C})_{\mathcal M}$, where $\pi_1, \pi_2 : H \times H \longrightarrow H$ are the two projections.
\end{Thm}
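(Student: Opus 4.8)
The plan is to prove both implications by exploiting the fact that $Sh(Aff_{\mathcal C})_{\mathcal M}$, being a category of sheaves on a site, is a Grothendieck topos; in particular it is Barr-exact, so every epimorphism is effective (the coequalizer of its own kernel pair), every equivalence relation is effective (the kernel pair of its quotient map), coproducts are disjoint and universal, and pullback along an epimorphism is a conservative functor preserving finite limits. These are the structural inputs I will combine with the geometric stability results of Section 4.

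For the direction ($\Rightarrow$), suppose $F$ is an $\mathcal M$-scheme with affine Zariski $\mathcal M$-covering $\{U_i \to F\}_{i \in I}$, and set $H := \coprod_{i \in I} U_i$ with induced epimorphism $q : H \to F$. I would take $R := H \times_F H \hookrightarrow H \times H$, the kernel pair of $q$, which is automatically an equivalence relation. Condition (iii) is then immediate since $q$ is an effective epimorphism. For (i), one computes $R_{i,j} = R \times_{H \times H}(U_i \times U_j) \cong U_i \times_F U_j$, and the projection $R_{i,j} \to U_i$ is the base change of the Zariski open immersion $U_j \to F$ along $U_i \to F$; hence it is a Zariski open immersion relative to $\mathcal M$ by Lemma \ref{lema4.2}. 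For (ii), since $U_i \to F$ is a monomorphism by Lemma \ref{zaropenimmono}, we get $R_{i,i} = U_i \times_F U_i \cong U_i$, embedded in $U_i \times U_i$ by the diagonal.

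For the direction ($\Leftarrow$), given $\{U_i\}$ and an equivalence relation $R$ satisfying (i)--(iii), I would show that $\{U_i \hookrightarrow H \xrightarrow{q} F\}_{i \in I}$ is an affine Zariski $\mathcal M$-covering. The coequalizer map $q : H \to F$ is an epimorphism, giving condition (ii) of Definition \ref{defin4.8}, so the substance is to verify that each $U_i \to F$ is a Zariski open immersion relative to $\mathcal M$. By Barr-exactness the equivalence relation $R$ is effective, whence $R \cong H \times_F H$ and $R_{i,j} \cong U_i \times_F U_j$; moreover the symmetry of $R$ identifies $R_{i,j} \cong R_{j,i}$ compatibly with projections, so by (i) both projections $R_{i,j} \to U_i$ and $R_{i,j} \to U_j$ are Zariski open immersions relative to $\mathcal M$. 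Now fix an affine $X = Spec(c)$ with $g : X \to F$; I must show $U_i \times_F X \to X$ is a monomorphism with Zariski open image. Applying Lemma \ref{factoexplicit} to $q$ and $g$ yields a spectral $\mathcal M$-cover $\{\alpha_k^{op} : X_k \to X\}_{k \in K}$ with factorizations of each $X_k \to X \to F$ through some $U_{w(k)} \to F$. Pulling back along $X_k \to X$ gives $U_i \times_F X_k \cong R_{i,w(k)} \times_{U_{w(k)}} X_k$, which maps to $X_k$ as a base change of the Zariski open immersion $R_{i,w(k)} \to U_{w(k)}$, hence is a Zariski open immersion relative to $\mathcal M$ by Lemma \ref{lema4.2}.

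The remaining step, descending these two properties along the cover $\{X_k \to X\}$, is the main obstacle. For the monomorphism claim, Lemma \ref{grotop}(1) identifies $\coprod_k (U_i \times_F X_k)$ with the pullback of $U_i \times_F X \to X$ along the epimorphism $\coprod_k X_k \to X$ (an epimorphism by Lemma \ref{coverinducepiintopos}); this pullback is a monomorphism because the components are monomorphisms landing in disjoint summands, and since pullback along an epimorphism is conservative and preserves the relevant diagonal, $U_i \times_F X \to X$ is itself a monomorphism. For the image claim, each $U_i \times_F X_k \subseteq X_k$ is Zariski open relative to $\mathcal M$, hence by Definition \ref{zariskiopenforsheaves}(1) is the image of a family of spectral immersions $\{Y_{k\ell} \to X_k\}_\ell$; composing with the spectral immersions $X_k \to X$ and using Lemma \ref{stabilityspec} gives a family $\{Y_{k\ell} \to X\}$ of spectral immersions, and Lemma \ref{grotop} shows that their combined image in $X$ is exactly $U_i \times_F X$. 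Thus $U_i \times_F X \subseteq X$ is Zariski open relative to $\mathcal M$, completing the verification that $U_i \to F$ is a Zariski open immersion and hence that $F$ is an $\mathcal M$-scheme. The delicate points throughout are the appeal to Barr-exactness to realize $R$ as the kernel pair $H \times_F H$, and the descent of the Zariski open image property along a spectral $\mathcal M$-cover.
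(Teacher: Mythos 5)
Your proposal is correct and follows the paper's argument almost step for step: the forward direction takes $R$ to be the kernel pair $H\times_F H$ and identifies $R_{i,j}\cong U_i\times_F U_j$, and the converse uses effectivity of equivalence relations, Lemma \ref{factoexplicit} to produce the spectral cover $\{X_k\to X\}$ with factorizations through $U_{w(k)}$, the identification $U_i\times_F X_k\cong R_{i,w(k)}\times_{U_{w(k)}}X_k$, and Lemmas \ref{stabilityspec} and \ref{grotop} to reassemble the image --- exactly as in the paper (your explicit appeal to the symmetry of $R$ to get that the second projection $R_{i,w(k)}\to U_{w(k)}$ is a Zariski open immersion fills in a point the paper leaves implicit). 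The one genuine divergence is how you obtain that $U_i\times_F X\to X$ is a monomorphism: you descend this along the epimorphism $\coprod_k X_k\to X$, invoking conservativity of pullback along epimorphisms in a topos, and as a result you never use hypothesis (ii). The paper instead uses (ii) directly: since the diagonal $U_i\to U_i\times U_i$ has a retraction, (ii) forces $U_i\cong R_{i,i}\cong U_i\times_F U_i$, so $f_i:U_i\to F$ is a monomorphism and every base change of it is one --- a more elementary route that avoids the descent fact. Both work; your version quietly shows that (ii) is redundant given (i) and (iii) (since (i) with $i=j$ makes the sectioned projection $U_i\times_F U_i\to U_i$ a monomorphism, hence an isomorphism), while the paper's version keeps the argument within the lemmas already proved in Section 4.
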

\begin{proof}
    Suppose that $F$ is an $\mathcal M$-scheme. Then, $F$ has an affine Zariski $\mathcal M$-covering $\{f_i : U_i \longrightarrow F\}_{i\in I}$. We set $H := \coprod_{i \in I} U_i$. Let $R$ be the following pullback in $Sh(Aff_{\mathcal C})_{\mathcal M}$
    \begin{equation}
\begin{tikzcd}
R := H \times_F H \arrow[rr, "p_1"] \arrow[d, "p_2"'] &  & H= \coprod_{i \in I} U_i \arrow[d, "\coprod_{i \in I} f_i", two heads] \\
H = \coprod_{i \in I} U_i \arrow[rr, "\coprod_{i \in I} f_i", two heads]                                   &  & F                     
\end{tikzcd}
    \end{equation}It may be verified that $R \xrightarrow{\text{ }(p_1, p_2)\text{ }} H \times H$ is an equivalence relation on $H$. For each $i, j \in I$, let $R_{i, j} := R \times_{H \times H}(U_i \times U_j) \xrightarrow{\text{ }r_{i, j}\text{ }} U_i \times U_j$ denote the canonical morphism to $U_i \times U_j$. It may be verified that $R_{i, j}$ may also be written as the pullback
\begin{equation}\label{alterpullback}
\begin{tikzcd}
U_i \times_F U_j \cong {R_{i, j}} \arrow[r, "r_{i, j}"] \arrow[d, "r_{i, j}"'] & U_i \times U_j \arrow[r] & U_j \arrow[dd, "f_j"] \\
U_i \times U_j \arrow[d]       &                          &                       \\
U_i \arrow[rr, "f_i"']         &                          & F                    
\end{tikzcd}
\end{equation}Since $f_j : U_j \longrightarrow F$ is a Zariski open immersion relative to $\mathcal M$, it follows from Lemma $\ref{lema4.2}$ that the composition $R_{i, j} \xrightarrow{\text{ }r_{i, j}\text{ }} U_i \times U_j \longrightarrow U_i$ in \eqref{alterpullback} is a Zariski open immersion relative to $\mathcal M$. Hence, $R$ satisfies condition (i). Taking $i = j$ in \ref{alterpullback}, we see that there is a morphism $s_i : U_i \longrightarrow R_{i, i}$ such that
\begin{equation}
   \left(U_i \xrightarrow{\text{ }(1_{U_i}, 1_{U_i})\text{ }} U_i \times U_i\right) =  \left(U_i \xrightarrow{\text{ }s_i\text{ }} R_{i, i} \xrightarrow{\text{ }r_{i, i}\text{ }} U_i \times U_i\right)
\end{equation}Using Lemma \ref{zaropenimmono}, it may be verified that $s_i$ is an isomorphism. Hence, $R$ satisifies condition (ii). Since 
$H\longrightarrow F$ is an epimorphism, condition (iii) follows from \cite[$\S$ 4.7, Theorem 8]{MM}.

    \smallskip
    Conversely, let $\{U_i\}_{i \in I}$ be a family of affines and $R \hookrightarrow H \times H$ be an equivalence relation on $H = \coprod_{i \in I} U_i$ satisfying conditions (i), (ii) and (iii). Let $f : H \longrightarrow F$ be the canonical epimorphism associated to the coequalizer in $\eqref{eqt4.5}$. For each $i \in I$, let $f_i : U_i \longrightarrow F$ be the composition $U_i \longrightarrow H \xrightarrow{\text{ }f\text{ }} F$. Using condition (iii), it may be verified that for each $i, j \in I$, we have a pullback square  as in \eqref{alterpullback} in $Sh(Aff_{\mathcal C})_{\mathcal M}$. We claim that
    \begin{equation}
      \{f_i : U_i \longrightarrow F\}_{i \in I}
    \end{equation}is an affine Zariski $\mathcal M$-covering. Since $f : H= \coprod_{i \in I} U_i  \longrightarrow F$ is an epimorphism, it suffices to show   that  $f_i : U_i \longrightarrow F$ is a Zariski open immersion relative to $\mathcal M$ for each $i\in I$. Since the monomorphism $U_i \xrightarrow{\text{ }(1_{U_i}, 1_{U_i})\text{ }} U_i \times U_i$ has a retraction, it follows from condition (ii) that 
    \begin{equation}
      U_i \cong R_{i, i} \cong U_i \times_F U_i
    \end{equation}Hence, $f_i : U_i \longrightarrow F$ is a monomorphism.
    
    \smallskip
    Let $Y = Spec(a) \in Aff_{\mathcal C}$ be affine and $g : Y \longrightarrow F$ be a morphism in $Sh(Aff_{\mathcal C})_{\mathcal M}$. We consider the following pullback square in $Sh(Aff_{\mathcal C})_{\mathcal M}$
    \begin{equation}\label{digirm4.48}
\begin{tikzcd}
U_i \times_F Y \arrow[rr] \arrow[d] &  & Y \arrow[d, "g"] \\
U_i \arrow[rr, "f_i"']              &  & F               
\end{tikzcd}
    \end{equation}
    It is clear that the morphism $U_i \times_F Y \longrightarrow Y$ in \eqref{digirm4.48} is a monomorphism. We claim that its image is Zariski open relative to $\mathcal M$. Using Lemma \ref{factoexplicit}, it follows that there is a family $\{\alpha_k^{op} : Y_k \longrightarrow Y\}_{k\in K}$ of spectral immersions relative to $\mathcal M$ such that the morphism $\coprod_{k \in K} \alpha_k^{op} : \coprod_{k \in K} Y_k \longrightarrow Y$ is an epimorphism in $Sh(Aff_{\mathcal C})_{\mathcal M}$ and such that for each $k \in K$, there is an element $w(k) \in I$ along with a factorization
    \begin{equation}\label{eqton4.49}
\begin{tikzcd}
Y_k \arrow[rr, "\alpha_k^{op}"] \arrow[d, "h_k"', dashed] &  & Y \arrow[d, "g"] \\
U_{w(k)} \arrow[rr, "f_{w(k)}"']                          &  & F               
\end{tikzcd}
    \end{equation}Since the square
\begin{equation}
\begin{tikzcd}
{R_{i, w(k)}} \arrow[r, "r_{i,w(k)}"] \arrow[d, "r_{i, w(k)}"'] & U_i \times U_{w(k)} \arrow[r] & U_{w(k)} \arrow[dd, "f_{w(k)}"] \\
U_i \times U_{w(k)} \arrow[d]       &                          &                       \\
U_i \arrow[rr, "f_i"']         &                          & F                    
\end{tikzcd}
\end{equation}is a pullback in $Sh(Aff_{\mathcal C})_{\mathcal M}$, it follows from $\eqref{eqton4.49}$ that
\begin{equation}
    (U_i \times_F Y) \times_Y Y_k \cong (U_i \times_F U_{w(k)}) \times_{U_{w(k)}} Y_k \cong R_{i, w(k)} \times_{U_{w(k)}} Y_k
\end{equation} We now have the following pullback square in $Sh(Aff_{\mathcal C})_{\mathcal M}$
\begin{equation}
\begin{tikzcd}\label{eqton4.52}
{(U_i \times_F Y) \times_Y Y_k \cong R_{i, w(k)} \times_{U_{w(k)}} Y_k} \arrow[rrrr] \arrow[d] &  &                                &  & Y_k \arrow[d, "h_k"] \\
{R_{i, w(k)}} \arrow[rr, "r_{i, w(k)}"']                                                                       &  & U_i \times U_{w(k)} \arrow[rr] &  & U_{w(k)}     
\end{tikzcd}
\end{equation}Since by condition (i), the morphism $R_{i, w(k)} \xrightarrow{\text{ }r_{i, w(k)}\text{ }} U_i \times U_{w(k)} \longrightarrow U_{w(k)}$ is a Zariski open immersion relative to $\mathcal M$, it follows that the morphism 
$
(U_i \times_F Y) \times_Y Y_k \cong R_{i, w(k)} \times_{U_{w(k)}} Y_k \longrightarrow Y_k
$ in \eqref{eqton4.52} is a monomorphism whose image is Zariski open relative to $\mathcal M$. Hence, there exists a family $\{Z_{ikj} \longrightarrow Y_k\}_{j \in J_{ik}}$ of spectral immersions relative to $\mathcal M$ such that the image of the induced morphism $\coprod_{j \in J_{ik}} Z_{ikj} \longrightarrow Y_k$ is $(U_i \times_F Y) \times_Y Y_k$. It follows from Lemma \ref{stabilityspec} and Lemma $\ref{grotop}$ that $\{Z_{ikj} \longrightarrow Y_k \xrightarrow{\alpha_k^{op}} Y\}_{k \in K, j \in J_{ik}}$ is a family of spectral immersions relative to $\mathcal M$ such that the image of the induced morphism $\coprod_{k \in K, j \in J_{ik}} Z_{ikj} \longrightarrow Y$ is $U_i \times_F Y$. Hence, $U_i \times_F Y$ is Zariski open relative to $\mathcal M$. This shows that $f_i : U_i \longrightarrow F$ is a Zariski open immersion relative to $\mathcal M$. The proof is now complete.
\end{proof}
\section{Change of Base}
  Throughout this section, we assume that $(\mathcal C, \otimes, 1)$ and $(\mathcal D, \otimes, 1)$ are closed symmetric monoidal categories which are both complete and cocomplete. We fix an adjunction
  \begin{equation}\label{monadjoint}
      \left(\mathbf B :  \mathcal C \longrightarrow \mathcal D,\text{ }\mathbf A : \mathcal D \longrightarrow \mathcal C\right)
  \end{equation}with unit $\eta:id_{\mathcal C}\longrightarrow \mathbf A\mathbf B$ and counit $\varepsilon:\mathbf B\mathbf A\longrightarrow id_{\mathcal D}$. We further assume that the left adjoint $\mathbf B$ has a strong symmetric monoidal struture i.e. there is a natural isomorphism
  \begin{equation}
  \left(\mathbf B(a) \otimes \mathbf B(b) \xrightarrow{\text{ }\sim\text{ }} \mathbf B(a \otimes b) : a, b \in \mathcal C\right)\\
  \end{equation} and an isomorphism $1 \xrightarrow{\text{ }\sim\text{ }} \mathbf B(1)$ subject to certain coherence axioms (see for instance, \cite[$\S$ 11.2]{Mac}). In that case, it may be verified that the right adjoint $\mathbf A$ has a lax symmetric monoidal structure given by maps  (see for instance, \cite[$\S$ 11.2]{Mac})
  \begin{equation} \left(\mathbf A(a) \otimes \mathbf A(b) \longrightarrow  \mathbf A(a \otimes b) : a, b \in \mathcal D\right)
  \end{equation} It then follows that the adjunction in $\eqref{monadjoint}$ induces an adjunction
  \begin{equation}\label{eqtunio5.2}
      \left(\mathbf B : Comm(\mathcal C) \longrightarrow Comm(\mathcal D),\text{ }\mathbf A : Comm(\mathcal D) \longrightarrow Comm(\mathcal C)\right)
  \end{equation}or equivalently, an adjunction
  \begin{equation}\label{eqtunio5.3}
      \left(\mathbf A^{op} : Aff_{\mathcal D} \longrightarrow Aff_{\mathcal C},\text{ }\mathbf B^{op} : Aff_{\mathcal C} \longrightarrow Aff_{\mathcal D}\right)
  \end{equation}Hence, there is an adjunction $(\mathbf A_!, \mathbf B_!)$
  \begin{equation}\label{equt5.7}
      \mathbf A_! = \_\_ \circ \mathbf A : PSh(Aff_{\mathcal C}) \longrightarrow PSh(Aff_{\mathcal D}),\text{ }\mathbf B_! = \_\_ \circ \mathbf B : PSh(Aff_{\mathcal D}) \longrightarrow PSh(Aff_{\mathcal C})
  \end{equation}
We also fix a left $\mathcal C$-actegory $(\mathcal M, \boxtimes)$ (resp. a left $\mathcal D$-actegory $(\mathcal N, \boxplus)$) such that $\mathcal M$ (resp. $\mathcal N$) is both complete and cocomplete and the bifunctor $\boxtimes : \mathcal C \times \mathcal M \longrightarrow \mathcal M$ (resp. $\boxplus : \mathcal D \times \mathcal N \longrightarrow \mathcal N$) preserves colimits in both variables. 
\begin{lem}\label{lemcont}
    (1) The functor $\mathbf B^{op} : Aff_{\mathcal C} \longrightarrow Aff_{\mathcal D}$ preserves limits.

    \smallskip
    (2) Suppose that $\mathbf B^{op} : Aff_{\mathcal C} \longrightarrow Aff_{\mathcal D}$ takes fpqc $\mathcal M$-covers to fpqc $\mathcal N$-covers and $\mathbf A : \mathcal D \longrightarrow \mathcal C$ preserves filtered colimits.  Then $\mathbf B^{op} : Aff_{\mathcal C} \longrightarrow Aff_{\mathcal D}$ takes spectral immersions relative to $\mathcal M$ to spectral immersions relative to $\mathcal N$ and takes spectral $\mathcal M$-covers to spectral $\mathcal N$-covers.
\end{lem}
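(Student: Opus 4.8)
The plan is to treat the two parts separately, reducing everything to formal properties of $\mathbf B$ and $\mathbf A$ on commutative monoids and their coslices. For part (1), I would simply note that $Aff_{\mathcal C} = Comm(\mathcal C)^{op}$ and $Aff_{\mathcal D} = Comm(\mathcal D)^{op}$, so that $\mathbf B^{op}$ preserving limits in $Aff_{\mathcal C}$ is exactly the assertion that $\mathbf B : Comm(\mathcal C) \longrightarrow Comm(\mathcal D)$ preserves colimits. Since \eqref{eqtunio5.2} exhibits $\mathbf B$ as a left adjoint on categories of commutative monoids, it preserves all colimits, and (1) follows at once.

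For part (2), the substance lies in showing that $\mathbf B^{op}$ carries spectral immersions to spectral immersions; the statement about spectral covers is then formal, since a spectral $\mathcal M$-cover is an fpqc $\mathcal M$-cover all of whose legs are spectral immersions, and by hypothesis $\mathbf B^{op}$ preserves the fpqc property while (by the first assertion) preserving the spectral-immersion property of each leg. So I would fix a spectral immersion $\alpha^{op} : Spec(b) \longrightarrow Spec(a)$, i.e. $\alpha : a \longrightarrow b$ is $\mathcal M$-flat, an epimorphism, and of finite type, and verify the three corresponding properties for $\mathbf B(\alpha) : \mathbf B(a) \longrightarrow \mathbf B(b)$ relative to $\mathcal N$. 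Epimorphisms are preserved because $\mathbf B$ is a left adjoint on $Comm$, by the standard argument transporting a test pair of parallel maps across the adjunction. For $\mathcal N$-flatness, the key trick is to package $\alpha$ inside a cover: the two-element family $\{\alpha^{op},\, 1_{Spec(a)}\}$ is an fpqc $\mathcal M$-cover, since both legs are $\mathcal M$-flat and the singleton $\{(1_a)^* \cong 1\}$ is already conservative. By hypothesis its image $\{\mathbf B(\alpha)^{op},\, 1_{Spec(\mathbf B(a))}\}$ is an fpqc $\mathcal N$-cover, and since every leg of an fpqc $\mathcal N$-cover is by definition $\mathcal N$-flat, this forces $\mathbf B(\alpha)$ to be $\mathcal N$-flat.

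The main obstacle, and the point where the hypothesis that $\mathbf A$ preserves filtered colimits is used, is preservation of finite type. Here I would first build the coslice adjunction induced by $(\mathbf B, \mathbf A)$: for each $a$ there is an adjunction $(\mathbf B_a : a/Comm(\mathcal C) \longrightarrow \mathbf B(a)/Comm(\mathcal D),\ \mathbf A_a : \mathbf B(a)/Comm(\mathcal D) \longrightarrow a/Comm(\mathcal C))$, where $\mathbf A_a$ sends $(\mathbf B(a) \longrightarrow d)$ to $(a \xrightarrow{\eta_a} \mathbf A\mathbf B(a) \longrightarrow \mathbf A(d))$, yielding a natural isomorphism $\mathbf B(a)/Comm(\mathcal D)(\mathbf B(b), d) \cong a/Comm(\mathcal C)(b, \mathbf A_a(d))$. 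Next I would check that $\mathbf A_a$ preserves filtered colimits: the coslice forgetful functors create filtered colimits (as already used in the proof of Lemma \ref{stabilityspec}), and the forgetful functors $Comm(\mathcal C)\to\mathcal C$, $Comm(\mathcal D)\to\mathcal D$ create them too (standard, since $\mathcal C,\mathcal D$ are closed and cocomplete, so $\otimes$ commutes with filtered colimits in each variable); hence the hypothesis that $\mathbf A : \mathcal D \longrightarrow \mathcal C$ preserves filtered colimits lifts first to $\mathbf A : Comm(\mathcal D)\to Comm(\mathcal C)$ and then to $\mathbf A_a$.

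Finally, for a filtered diagram $\mathbb D' : \mathcal I \longrightarrow \mathbf B(a)/Comm(\mathcal D)$, I would chain three identifications: the adjunction isomorphism replaces $\mathbf B(a)/Comm(\mathcal D)(\mathbf B(b),-)$ by $a/Comm(\mathcal C)(b, \mathbf A_a(-))$; the finite type of $\alpha$, applied to the filtered diagram $\mathbf A_a \circ \mathbb D'$, lets the colimit pass inside the hom; and $\mathbf A_a$ preserving filtered colimits identifies $\mathrm{colim}_i\, \mathbf A_a \mathbb D'(i)$ with $\mathbf A_a(\mathrm{colim}_i\, \mathbb D'(i))$. Tracking that the resulting composite is precisely the canonical comparison map then shows $\mathbf B(\alpha)$ is of finite type, so $\mathbf B(\alpha)^{op}$ is a spectral immersion relative to $\mathcal N$, completing (2). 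The finicky points I expect to have to pin down are the naturality and canonicity of these isomorphisms and the verification that the coslice right adjoint $\mathbf A_a$ genuinely computes filtered colimits on underlying objects; these are the most delicate parts of the argument, whereas the epimorphism and $\mathcal N$-flatness steps are essentially immediate once the cover trick is in place.
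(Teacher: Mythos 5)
Your proposal is correct and follows essentially the same route as the paper's proof: part (1) via the adjunction $(\mathbf A^{op},\mathbf B^{op})$, $\mathcal N$-flatness via the two-element cover $\{\alpha^{op},1_{Spec(a)}\}$, epimorphisms via $\mathbf B$ being a left adjoint on $Comm$, and finite type via the induced coslice adjunction $(\widehat{\mathbf B},\widehat{\mathbf A})$ together with the fact that $\widehat{\mathbf A}$ preserves filtered colimits. The paper's argument is the same chain of hom-set isomorphisms you describe, so no further comparison is needed.
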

\begin{proof}
(1) The result is clear from the adjunction in \eqref{eqtunio5.3}.

\smallskip
(2) It follows from \eqref{eqtunio5.2} that $\mathbf B : Comm(\mathcal C) \longrightarrow Comm(\mathcal D)$ preserves colimits and hence epimorphisms. Let $\alpha : a \longrightarrow b$ be an $\mathcal M$-flat morphism in $Comm(\mathcal C)$. Then $\{\alpha^{op}:Spec(b)\longrightarrow Spec(a), 1_{Spec(a)}:Spec(a)\longrightarrow Spec(a)\}$ is an fpqc $\mathcal M$-cover in $Aff_{\mathcal C}$. Since $\mathbf B^{op}$ takes fpqc $\mathcal M$-covers to fpqc $\mathcal N$-covers, we see that $ \{\mathbf B^{op}(\alpha^{op})=\mathbf B(\alpha)^{op}, 1_{Spec(\mathbf B(a))}\}$ 
 is an fpqc $\mathcal N$-cover in $Aff_{\mathcal D}$. In particular, $\mathbf B(\alpha)$ is $\mathcal N$-flat, i.e., $\mathbf B : Comm(\mathcal C) \longrightarrow Comm(\mathcal D)$ takes $\mathcal M$-flat morphisms to $\mathcal N$-flat morphisms. 

\smallskip
We now show that $\mathbf B : Comm(\mathcal C) \longrightarrow Comm(\mathcal D)$ preserves morphisms of finite type. We note that since $\mathbf A : \mathcal D \longrightarrow \mathcal C$ preserves filtered colimits, so does the induced functor $\mathbf A : Comm(\mathcal D) \longrightarrow Comm(\mathcal C)$. Let $\alpha : a \longrightarrow b$ be a morphism of finite type in $Comm(\mathcal C)$. We need to show that $\mathbf B(\alpha) : \mathbf B(a) \longrightarrow \mathbf B(b)$ is of finite type. Let $\mathbb D : \mathcal I \longrightarrow \mathbf B(a)/Comm(\mathcal D)$ be a filtered diagram. It may be verified that \eqref{eqtunio5.2} induces an adjunction $( \widehat{\mathbf B} , \widehat{\mathbf A} )$
\begin{equation}
    \begin{split}
       \widehat{\mathbf B} : a/Comm(\mathcal C) \longrightarrow \mathbf B(a)/Comm(\mathcal D)& \qquad   (a \xrightarrow{\text{ }\beta\text{ }} c) \mapsto ( \mathbf B(a) \xrightarrow{\text{ }\mathbf B(\beta)\text{ }} \mathbf B(c)) \\ 
        \widehat{\mathbf A} : \mathbf B(a)/Comm(\mathcal D) \longrightarrow a/Comm(\mathcal C)&\qquad  ( \mathbf B(a) \xrightarrow{\text{ }\gamma\text{ }} d) \mapsto (a \xrightarrow{\text{ }\eta_a\text{ }} \mathbf A\mathbf B(a) \xrightarrow{\text{ }\mathbf A(\gamma)\text{ }} \mathbf A(d))  
    \end{split}
\end{equation}We note that we have a commutative square
\begin{equation}\label{58y}
\begin{tikzcd}
\mathbf B(a)/Comm(\mathcal D) \arrow[rr, "\widehat{\mathbf A}"] \arrow[d] &  & a/Comm(\mathcal C) \arrow[d] \\
Comm(\mathcal D) \arrow[rr, "\mathbf A"']                                                      &  & Comm(\mathcal C)                                
\end{tikzcd}
\end{equation} where the vertical arrows in \eqref{58y} are the forgetful functors. Since $\mathbf A : Comm(\mathcal D) \longrightarrow Comm(\mathcal C)$ and the forgetful functors 
in \eqref{58y} preserve   filtered colimits and the   functor   $a/Comm(\mathcal C) \longrightarrow \mathcal C$ is conservative, it follows from \eqref{58y} that $\widehat{\mathbf A} : \mathbf B(a)/Comm(\mathcal D) \longrightarrow a/Comm(\mathcal C)$ preserves filtered colimits. Since $\alpha : a \longrightarrow b$ is a morphism of finite type in $Comm(\mathcal C)$, we have
\begin{equation}
\begin{split}
    \mathbf B(a)/Comm(\mathcal D)(  \mathbf B(\alpha), \underset{i \in \mathcal I}{colim}\text{ }\mathbb D(i)) &\cong \mathbf B(a)/Comm(\mathcal D)(\widehat{\mathbf B}(\alpha), \underset{i \in I}{colim}\text{ }\mathbb D(i))\\
    &\cong a/Comm(\mathcal C)( \alpha, \widehat{\mathbf A}(\underset{i \in I}{colim}\text{ }\mathbb D(i)))\\
     &\cong a/Comm(\mathcal C)( \alpha, \underset{i \in I}{colim}\text{ }\widehat{\mathbf A}(\mathbb D(i)))\\
     &\cong \underset{i \in I}{colim}\text{ }a/Comm(\mathcal C)(\alpha, \widehat{\mathbf A}(\mathbb D(i)))\\
     &\cong \underset{i \in I}{colim}\text{ }\mathbf B(a)/Comm(\mathcal D)(\widehat{\mathbf B}(\alpha), \mathbb D(i))\\
     &\cong \underset{i \in I}{colim}\text{ }\mathbf B(a)/Comm(\mathcal D)(  \mathbf B(\alpha), \mathbb D(i))
\end{split}
\end{equation}
  Hence, $\mathbf B : Comm(\mathcal C) \longrightarrow Comm(\mathcal D)$ preserves morphisms of finite type. It follows that $\mathbf B^{op} : Aff_{\mathcal C} \longrightarrow Aff_{\mathcal D}$ takes spectral immersions relative to $\mathcal M$ to spectral immersions relative to $\mathcal N$. Finally, since $\mathbf B^{op} : Aff_{\mathcal C} \longrightarrow Aff_{\mathcal D}$ takes fpqc $\mathcal M$-covers to fpqc $\mathcal N$-covers, it follows that $\mathbf B^{op}$ also takes spectral $\mathcal M$-covers to spectral $\mathcal N$-covers. This concludes the proof.
\end{proof}
Since the functor $\mathbf B : \mathcal C \longrightarrow \mathcal D$ has a strong symmetric monoidal structure, we now recall (see for instance, \cite[Proposition 3.6.1]{act}) that the left $\mathcal D$-action $\boxplus : \mathcal D \times \mathcal N \longrightarrow \mathcal N$ restricts to the left $\mathcal C$-action $\boxplus^{\mathbf B} : \mathcal C \times \mathcal N \longrightarrow \mathcal N$ given by
  \begin{equation}\label{restricting}
      \mathcal C \times \mathcal N \xrightarrow{\text{ }\mathbf B \times 1_{\mathcal N}\text{ }} \mathcal D \times \mathcal N \xrightarrow{\text{ }\boxplus\text{ }} \mathcal N 
  \end{equation}
  We will denote the left $\mathcal C$-actegory $(\mathcal N, \boxplus^{\mathbf B})$ by $\mathbf B_*(\mathcal N)$. Since $\mathbf B : \mathcal C \longrightarrow \mathcal D$ preserves colimits and $\boxplus : \mathcal D \times \mathcal N \longrightarrow \mathcal N$ preserves colimits in both variables, it follows that $\boxplus^{\mathbf B} : \mathcal C \times \mathcal N \longrightarrow \mathcal N$ preserves colimits in both variables. Further, for every $a \in Comm(\mathcal C)$, we have $\mathbf{Mod}_{\mathbf B_*(\mathcal N)}(a) = \mathbf{Mod}_{\mathcal N}(\mathbf B(a))$. We now fix a lax $\mathcal C$-linear functor
\begin{equation}\label{actadjunct}
   (\mathbb L, \Gamma) : \mathbf B_*(\mathcal N) = (\mathcal N, \boxplus^{\mathbf B}) \longrightarrow (\mathcal M, \boxtimes)
\end{equation}We recall (see for instance, \cite[Lemma 3.3.7]{act}) that for each $a \in Comm(\mathcal C)$, there is an induced functor
\begin{equation}\label{modadjunct}
 \begin{split}
\mathbb L^a : \mathbf{Mod}_{\mathcal N}(\mathbf B(a)) = \mathbf{Mod}_{\mathbf B_*(\mathcal N)}(a) &\longrightarrow \mathbf{Mod}_{\mathcal M}(a)\\
\left(n, a \boxplus^{\mathbf B} n \xrightarrow{\text{ }\rho\text{ }} n\right) &\mapsto \left(\mathbb L(n), a \boxtimes \mathbb L(n) \xrightarrow{\text{ }\Gamma_{a, n}\text{ }} \mathbb L(a \boxplus^{\mathbf B} n) \xrightarrow{\text{ }\mathbb L(\rho)\text{ }} \mathbb L(n)\right)
\end{split}
\end{equation}By   abuse of notation, we will typically denote $\mathbb L^a$ simply by $\mathbb L$.
\begin{lem}\label{crucial}
Let $\alpha : a = (a, \mu_a, \iota_a) \longrightarrow (b, \mu_b, \iota_b) = b$ be a morphism in $Comm(\mathcal C)$. Then,

\smallskip
(1) For each $n=  (n, \rho) \in \mathbf{Mod}_{\mathcal N}(\mathbf B(a))$, there is a canonical morphism
    \begin{equation}\label{canonic}
       \theta_{\alpha, \mathbf B, \mathbb L, n} : b \boxtimes_a \mathbb L(n) \longrightarrow \mathbb L(\mathbf B(b) \boxplus_{\mathbf B(a)} n) 
    \end{equation}in $\mathbf{Mod}_{\mathcal M}(b)$.

\smallskip
(2) If $\mathbb L$ preserves coequalizers and $(\mathbb L, \Gamma) : \mathbf B_*(\mathcal N) \longrightarrow (\mathcal M, \boxtimes)$ is strong $\mathcal C$-linear, i.e., $\Gamma$ is a natural isomorphism, then for each $(n, \rho) \in \mathbf{Mod}_{\mathcal N}(\mathbf B(a))$, the morphism in $\eqref{canonic}$ is an isomorphism in $\mathbf{Mod}_{\mathcal M}(b)$.
\end{lem}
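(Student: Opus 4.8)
The plan is to realize $\theta$ as the comparison map between two coequalizer presentations, and then in part (2) to upgrade it to an isomorphism by exhibiting an isomorphism of the underlying parallel pairs. Throughout I write $\rho$ both for the $\mathbf B(a)$-module structure $\mathbf B(a)\boxplus n\to n$ and for the induced $a$-module structure $a\boxplus^{\mathbf B}n\to n$ (these coincide). I abbreviate by $g_1,g_2:\mathbf B(b)\boxplus(\mathbf B(a)\boxplus n)\rightrightarrows \mathbf B(b)\boxplus n$ the parallel pair of \eqref{coeq} (taken in $\mathcal N$ along $\mathbf B(\alpha)$) whose coequalizer is $\mathbf B(b)\boxplus_{\mathbf B(a)}n$, and by $f_1,f_2:b\boxtimes(a\boxtimes\mathbb L(n))\rightrightarrows b\boxtimes\mathbb L(n)$ the parallel pair of \eqref{coeq} whose coequalizer is $b\boxtimes_a\mathbb L(n)$, where $\mathbb L(n)$ carries the $a$-module structure $\mathbb L(\rho)\circ\Gamma_{a,n}$ of \eqref{modadjunct}. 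I also write $\phi_{c,c'}:\mathbf B(c)\otimes\mathbf B(c')\xrightarrow{\sim}\mathbf B(c\otimes c')$ for the strong monoidal comparison of $\mathbf B$.

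For part (1), I would set $P:=\mathbb L(coeq_{\mathbf B(\alpha),n})\circ\Gamma_{b,n}:b\boxtimes\mathbb L(n)\to\mathbb L(\mathbf B(b)\boxplus_{\mathbf B(a)}n)$ and check that $P$ coequalizes $f_1$ and $f_2$. The key is to introduce the single map $\Phi:=\Gamma_{b,\,a\boxplus^{\mathbf B}n}\circ(1_b\boxtimes\Gamma_{a,n}):b\boxtimes(a\boxtimes\mathbb L(n))\to\mathbb L(\mathbf B(b)\boxplus(\mathbf B(a)\boxplus n))$ and to verify the two intertwining identities $\Gamma_{b,n}\circ f_i=\mathbb L(g_i)\circ\Phi$ for $i=1,2$. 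For $i=1$ this is just the naturality of $\Gamma$ in the $\mathcal N$-variable applied to $\rho$. For $i=2$ it combines the naturality of $\Gamma$ in the $\mathcal C$-variable applied to $\mu_b\circ(1_b\otimes\alpha)$, the associativity coherence axiom for the lax $\mathcal C$-linear functor $(\mathbb L,\Gamma)$, and the identity $\mathbf B(\mu_b\circ(1_b\otimes\alpha))\circ\phi_{b,a}=\mu_{\mathbf B(b)}\circ(1_{\mathbf B(b)}\otimes\mathbf B(\alpha))$, which follows from the naturality of $\phi$ together with $\mu_{\mathbf B(b)}=\mathbf B(\mu_b)\circ\phi_{b,b}$; one also rewrites the restricted associator as $\lambda^{\mathbf B}_{b,a,n}=(\phi_{b,a}\boxplus 1_n)\circ\lambda^{\mathcal N}_{\mathbf B(b),\mathbf B(a),n}$. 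Granting these, $P\circ f_i=\mathbb L(coeq_{\mathbf B(\alpha),n}\circ g_i)\circ\Phi$, and since $coeq_{\mathbf B(\alpha),n}\circ g_1=coeq_{\mathbf B(\alpha),n}\circ g_2$ we obtain $P\circ f_1=P\circ f_2$. The universal property of the coequalizer $b\boxtimes_a\mathbb L(n)$ then yields a unique $\theta$ in $\mathcal M$ with $\theta\circ coeq_{\alpha,\mathbb L(n)}=P$. That $\theta$ is a morphism of $b$-modules is checked by precomposing the relevant square with the epimorphism $(1_b\boxtimes coeq_{\alpha,\mathbb L(n)})\circ(\lambda^{\mathcal M}_{b,b,\mathbb L(n)})^{-1}$ and using the explicit formula for $\widetilde{\rho}$ from Lemma \ref{extension}, exactly as in the final step of the proof of Theorem \ref{T2.2xs}.

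For part (2), the two intertwining identities of part (1) say precisely that $(\Phi,\Gamma_{b,n})$ is a morphism of parallel pairs from $(f_1,f_2)$ to $(\mathbb L(g_1),\mathbb L(g_2))$. When $\Gamma$ is a natural isomorphism, both $\Gamma_{b,n}$ and $\Phi$ (a composite of instances of $\Gamma$) are isomorphisms, so $(\Phi,\Gamma_{b,n})$ is an isomorphism of parallel pairs. Since $\mathbb L$ preserves coequalizers, $\mathbb L(\mathbf B(b)\boxplus_{\mathbf B(a)}n)$ is the coequalizer in $\mathcal M$ of $(\mathbb L(g_1),\mathbb L(g_2))$ with coequalizing map $\mathbb L(coeq_{\mathbf B(\alpha),n})$, while $b\boxtimes_a\mathbb L(n)$ is the coequalizer of $(f_1,f_2)$. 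An isomorphism of parallel pairs induces an isomorphism of coequalizers, and by the defining relation $\theta\circ coeq_{\alpha,\mathbb L(n)}=\mathbb L(coeq_{\mathbf B(\alpha),n})\circ\Gamma_{b,n}$ this induced isomorphism is exactly $\theta$; hence $\theta$ is an isomorphism in $\mathcal M$, and therefore an isomorphism in $\mathbf{Mod}_{\mathcal M}(b)$ since the forgetful functor reflects isomorphisms (Remark \ref{creation}).

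The main obstacle is the $i=2$ intertwining identity in part (1): this is the one place where the monoidal comparison $\phi$ of $\mathbf B$, the description of $\mu_{\mathbf B(b)}$, the restricted associator $\lambda^{\mathbf B}$, and the associativity coherence of $\Gamma$ must all be assembled correctly so that the $\mathcal C$-level multiplication leg $f_2$ is carried onto the $\mathcal D$-level leg $g_2$. Once $\Phi$ is seen to make both squares commute, the rest — the universal property argument in (1) and the isomorphism-of-coequalizers argument in (2) — is purely formal.
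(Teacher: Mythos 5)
Your proposal is correct and follows essentially the same route as the paper: the two intertwining identities you isolate are exactly the two commutative squares the paper uses to invoke the universal property of the coequalizer defining $b \boxtimes_a \mathbb L(n)$, and part (2) is the same isomorphism-of-parallel-pairs argument. The only cosmetic difference is that the paper obtains the $b$-module property of $\theta$ for free by running the universal property inside $\mathbf{Mod}_{\mathcal M}(b)$ (using that $\Phi$ and $\Gamma_{b,n}$ are $b$-module morphisms and that the coequalizer is created there, per Remark \ref{creation}), whereas you verify it separately in $\mathcal M$; both are fine.
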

\begin{proof}
(1) We note that as in \eqref{coeq}, $\mathbf B(b) \boxplus_{\mathbf B(a)} n$ is given by the following coequalizer in $\mathbf{Mod}_{\mathcal N}(\mathbf B(b)) = \mathbf{Mod}_{\mathbf B_*(\mathcal N)}(b)$
\begin{equation}\label{eqtunio5.13}\small
\begin{tikzcd}
\mathbf B(b) \boxplus (\mathbf B(a) \boxplus n) \arrow[rrrr, "1_{\mathbf B(b)} \boxplus \rho", shift left=3] \arrow[rr, "\sim"', shift right] &  & (\mathbf B(b) \otimes \mathbf B(a)) \boxplus n \arrow[rr, "\psi"', shift right] &  & \mathbf B(b) \boxplus n \arrow[rrrr, "{coeq_{\mathbf B(\alpha), (n, \rho)}}"] &  &  &  & \mathbf B(b) \boxplus_{\mathbf B(a)} n
\end{tikzcd}
\end{equation}where $\psi$ is the composition
\begin{equation}
    (\mathbf B(b) \otimes \mathbf B(a)) \boxplus n \cong \mathbf B(b \otimes a) \boxplus n \xrightarrow{\text{ }\mathbf B(\mu_b \circ (1_b \otimes \alpha)) \boxplus 1_n\text{ }} \mathbf B(b) \boxplus n
\end{equation} Applying $\mathbb L : \mathbf{Mod}_{\mathcal N}(\mathbf B(b)) = \mathbf{Mod}_{\mathbf B_*(\mathcal N)}(b) \longrightarrow \mathbf{Mod}_{\mathcal M}(b)$ to \eqref{eqtunio5.13} gives the following commutative diagram
\begin{equation}\label{eqtuniq5.16}
\begin{tikzcd}
\mathbb L((\mathbf B(b) \otimes \mathbf B(a)) \boxplus n) \arrow[d, "\mathbb L(\psi)"']              &  & \mathbb L(\mathbf B(b) \boxplus (\mathbf B(a) \boxplus n)) \arrow[ll, "\sim"'] \arrow[rr, "\mathbb L(1_{\mathbf B(b)} \boxplus \rho)"] &  & \mathbb L(\mathbf B(b) \boxplus n) \arrow[d, "{\mathbb L(coeq_{\mathbf B(\alpha), (n, \rho)})}"] \\
\mathbb L(\mathbf B(b) \boxplus n) \arrow[rrrr, "{\mathbb L(coeq_{\mathbf B(\alpha), (n, \rho)})}"'] &  &                                                                                                                                        &  & \mathbb L(\mathbf B(b) \boxplus_{\mathbf B(a)} n)                                               
\end{tikzcd}
\end{equation}in $\mathbf{Mod}_{\mathcal M}(b)$. It may be verified that
\begin{equation} 
\Gamma_{b, \mathbf B(a) \boxplus n} \circ (1_b \boxtimes \Gamma_{a, n}) : b \boxtimes (a \boxtimes \mathbb L(n)) \longrightarrow \mathbb L(\mathbf B(b) \boxplus (\mathbf B(a) \boxplus n)), \quad \Gamma_{b, n} : b \boxtimes \mathbb L(n) \longrightarrow \mathbb L(\mathbf B(b) \boxplus n)
\end{equation}are morphisms in $\mathbf{Mod}_{\mathcal M}(b)$. We now have the following commutative diagrams in $\mathbf{Mod}_{\mathcal M}(b)$
\begin{equation}
\begin{tikzcd}
b \boxtimes (a \boxtimes \mathbb L(n)) \arrow[d, "{\Gamma_{b, \mathbf B(a) \boxplus n} \circ (1_b \boxtimes \Gamma_{a, n})}"'] \arrow[rrr, "{1_b \boxtimes (\mathbb L(\rho) \circ \Gamma_{a, n})}"] &  &  & b \boxtimes \mathbb L(n) \arrow[d, "{\Gamma_{b, n}}"] \\
\mathbb L(\mathbf B(b) \boxplus (\mathbf B(a) \boxplus n)) \arrow[rrr, "\mathbb L(1_{\mathbf B(b)} \boxplus \rho)"']                                                                                &  &  & \mathbb L(\mathbf B(b) \boxplus n)                   
\end{tikzcd}
\end{equation}
\begin{equation}
\begin{tikzcd}
b \boxtimes (a \boxtimes \mathbb L(n)) \arrow[rr, "\sim"] \arrow[d, "{\Gamma_{b, \mathbf B(a) \boxplus n} \circ (1_b \boxtimes \Gamma_{a, n})}"'] &  & (b \otimes a) \boxtimes \mathbb L(n) \arrow[rrr, "(\mu_b \circ (1_b \otimes \alpha)) \boxtimes 1_{\mathbb L(n)}"] &  &  & b \boxtimes \mathbb L(n) \arrow[d, "{\Gamma_{b, n}}"] \\
\mathbb L(\mathbf B(b) \boxplus (\mathbf B(a) \boxplus n)) \arrow[rr, "\sim"']                                                                    &  & \mathbb L((\mathbf B(b) \otimes \mathbf B(a)) \boxplus n) \arrow[rrr, "\mathbb L(\psi)"']                         &  &  & \mathbb L(\mathbf B(b) \boxplus n)                   
\end{tikzcd}
\end{equation}It follows from \eqref{eqtuniq5.16} and the universal property of the coequalizer
\begin{equation}
    b \boxtimes_a \mathbb L(n) = Coeq\left(
\begin{tikzcd}
b \boxtimes (a \boxtimes \mathbb L(n)) \arrow[rrrr, "{1_b \boxtimes (\mathbb L(\rho) \circ \Gamma_{a, n})}", shift left=3] \arrow[rr, "\sim"', shift right] &  & (b \otimes a) \boxtimes \mathbb L(n) \arrow[rr, "(\mu_b \circ (1_b \otimes \alpha)) \boxtimes 1_{\mathbb L(n)}"', shift right] &  & b \boxtimes \mathbb L(n)
\end{tikzcd}\right)
\end{equation}that there is a morphism $\theta_{\alpha, \mathbf B, \mathbb L, n} : b \boxtimes_a \mathbb L(n) \longrightarrow \mathbb L(\mathbf B(b) \boxplus_{\mathbf B(a)} n)$ in $\mathbf{Mod}_{\mathcal M}(b)$.

\smallskip
(2) The result is clear from the proof of (1). 
\end{proof}
\begin{Thm}\label{basechangingscheme}
Suppose that we are given the following   data:

\smallskip
(a) An adjunction $ \left(\mathbf B :  \mathcal C \longrightarrow \mathcal D,\text{ }\mathbf A : \mathcal D \longrightarrow \mathcal C\right)$ between closed symmetric monoidal categories
$\mathcal C$ and $\mathcal D$ such that the left adjoint $\mathbf B$ has a strong symmetric monoidal structure.

\smallskip
(b) A left $\mathcal C$-actegory $(\mathcal M, \boxtimes)$, a left $\mathcal D$-actegory $(\mathcal N, \boxplus)$ and a lax $\mathcal C$-linear functor 
\begin{equation}\label{actadjunctv}
   (\mathbb L, \Gamma) : \mathbf B_*(\mathcal N) = (\mathcal N, \boxplus^{\mathbf B}) \longrightarrow (\mathcal M, \boxtimes)
\end{equation} satisfying conditions described above.

\smallskip
    Suppose also that $\mathbf A : \mathcal D \longrightarrow \mathcal C$ preserves filtered colimits and the functor $\mathbb L : \mathbf B_{*}(\mathcal N) \longrightarrow \mathcal M$ is conservative and preserves finite limits. Further, suppose that for any $\mathcal M$-flat morphism $a \longrightarrow b$ in $Comm(\mathcal C)$ and any $n \in \mathbf{Mod}_{\mathcal N}(\mathbf B(a))$, the canonical morphism in \eqref{canonic}
    \begin{equation}
        b \boxtimes_a \mathbb L(n) \longrightarrow \mathbb L(\mathbf B(b) \boxplus_{\mathbf B(a)} n) 
    \end{equation}is an isomorphism in $\mathbf{Mod}_{\mathcal M}(b)$. Then,
    
    \smallskip
    (1) The functor $\mathbf B^{op} : Aff_{\mathcal C} \longrightarrow Aff_{\mathcal D}$ takes fpqc $\mathcal M$-covers to fpqc $\mathcal N$-covers.

    \smallskip
    (2) The functor $\mathbf B_! = \_\_ \circ \mathbf B : PSh(Aff_{\mathcal D}) \longrightarrow PSh(Aff_{\mathcal C})$ in \eqref{equt5.7} restricts to a functor $\widetilde{\mathbf B_!} : Sh(Aff_{\mathcal D})_{\mathcal N} \longrightarrow Sh(Aff_{\mathcal C})_{\mathcal M}$ which has a left adjoint $\widetilde{\mathbf A_!} : Sh(Aff_{\mathcal C})_{\mathcal M} \longrightarrow Sh(Aff_{\mathcal D})_{\mathcal N}$.

    \smallskip
    Additionally, if $\mathcal M$ and $\mathcal N$ are subcanonical, then

    \smallskip
    (3) The functor $\widetilde{\mathbf A_!}$ restricts to a functor $Sch(\mathcal C)_{\mathcal M} \longrightarrow Sch(\mathcal D)_{\mathcal N}$ sending $F \mapsto \widetilde{\mathbf A_!}(F)$.

    \smallskip
    For any affine $X = Spec(a) \in Aff_{\mathcal C}$, $\widetilde{\mathbf A_!}(X)$ is canonically isomorphic to the affine scheme $\mathbf B^{op}(Spec(a)) \in Aff_{\mathcal D}$.
\end{Thm}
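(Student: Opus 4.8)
The plan is to treat the three parts in order, the engine throughout being the canonical morphism of Lemma~\ref{crucial}, which by hypothesis is an isomorphism. First I would record a preliminary observation: for each $a \in Comm(\mathcal C)$ the functor $\mathbb L^a$ of \eqref{modadjunct} is conservative and preserves finite limits. Indeed, by Remark~\ref{creation} the forgetful functors $\mathbf{Mod}_{\mathcal N}(\mathbf B(a)) \to \mathcal N$ and $\mathbf{Mod}_{\mathcal M}(a) \to \mathcal M$ create limits and reflect isomorphisms, and $\mathbb L^a$ lies over $\mathbb L$ through them; since $\mathbb L$ is assumed conservative and finite-limit preserving, so is $\mathbb L^a$. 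For part (1), given an fpqc $\mathcal M$-cover $\{\alpha_i^{op} : Spec(a_i) \to Spec(a)\}$ with conservative finite subfamily $\{\alpha_j^*\}_{j \in J}$, the assumption on \eqref{canonic} yields a natural isomorphism $\alpha_i^* \circ \mathbb L^a \cong \mathbb L^{a_i} \circ \mathbf B(\alpha_i)^*$. I would then transfer the two defining properties: $\mathcal N$-flatness of $\mathbf B(\alpha_i)$ follows because both $\mathbb L^a$ and $\alpha_i^*$ preserve finite limits ($\alpha_i$ being $\mathcal M$-flat), so the comparison map for $\mathbf B(\alpha_i)^*$ becomes an isomorphism after applying the conservative functor $\mathbb L^{a_i}$, whence it is an isomorphism; and conservativity of $\{\mathbf B(\alpha_j)^*\}_{j \in J}$ follows because a morphism inverted by all $\mathbf B(\alpha_j)^*$ is, after $\mathbb L^{a_j}$, inverted by all $\alpha_j^*$, hence by conservativity of $\{\alpha_j^*\}$ and of $\mathbb L^a$ is itself an isomorphism.

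Part (2) is formal. By part (1) together with Lemma~\ref{lemcont}(2), the functor $\mathbf B^{op}$ carries spectral $\mathcal M$-covers to spectral $\mathcal N$-covers, and by Lemma~\ref{lemcont}(1) it preserves the fibre products occurring in covers. Hence for a sheaf $G$ on $Aff_{\mathcal D}$ the descent datum for $\mathbf B_!(G) = G \circ \mathbf B^{op}$ along a spectral $\mathcal M$-cover $\mathcal U$ is identified with the (satisfied) descent datum for $G$ along $\mathbf B^{op}(\mathcal U)$, so $\mathbf B_!$ restricts to $\widetilde{\mathbf B_!} : Sh(Aff_{\mathcal D})_{\mathcal N} \to Sh(Aff_{\mathcal C})_{\mathcal M}$. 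I would then define $\widetilde{\mathbf A_!}$ as the composite of the inclusion $Sh(Aff_{\mathcal C})_{\mathcal M} \hookrightarrow PSh(Aff_{\mathcal C})$, the presheaf functor $\mathbf A_!$ of \eqref{equt5.7}, and the sheafification $(.)^{++}_{\mathcal N}$, and check $\widetilde{\mathbf A_!} \dashv \widetilde{\mathbf B_!}$ by chaining the sheafification adjunction with the presheaf adjunction $(\mathbf A_!, \mathbf B_!)$, using that $\widetilde{\mathbf B_!}(G)$ is already a sheaf.

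For part (3) I would first settle the affine case, which also gives the last assertion. For $X = Spec(a)$ the Yoneda computation $\mathbf A_!(Aff_{\mathcal C}(\_\_, Spec(a)))(Spec(d)) = Comm(\mathcal C)(a, \mathbf A(d)) \cong Comm(\mathcal D)(\mathbf B(a), d)$, natural in $d$ and coming from \eqref{eqtunio5.2}, shows $\mathbf A_!(X) \cong Aff_{\mathcal D}(\_\_, Spec(\mathbf B(a)))$; since $\mathcal N$ is subcanonical this representable is a sheaf, so $\widetilde{\mathbf A_!}(X) \cong \mathbf B^{op}(Spec(a))$. Consequently $\widetilde{\mathbf A_!}$ restricted to affines agrees with $\mathbf B^{op}$, and because $\mathbf B : Comm(\mathcal C) \to Comm(\mathcal D)$ preserves pushouts, $\widetilde{\mathbf A_!}$ preserves pullbacks of affines. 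Now for an $\mathcal M$-scheme $F$ with affine Zariski $\mathcal M$-covering $\{X_i \to F\}$, applying the colimit-preserving functor $\widetilde{\mathbf A_!}$ gives affine $\widetilde{\mathbf A_!}(X_i)$ and an epimorphism $\coprod_i \widetilde{\mathbf A_!}(X_i) \to \widetilde{\mathbf A_!}(F)$; it remains to prove that each $\widetilde{\mathbf A_!}(X_i) \to \widetilde{\mathbf A_!}(F)$ is a Zariski open immersion relative to $\mathcal N$.

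This last point is the main obstacle: $\widetilde{\mathbf A_!}$ is only a left adjoint, so it does not preserve the monomorphisms and pullbacks appearing in Definition~\ref{zariskiopenforsheaves}. The plan to circumvent this is to reduce every relevant finite limit to the affine level, where $\widetilde{\mathbf A_!} \simeq \mathbf B^{op}$ is controlled. Given an affine $Y \to \widetilde{\mathbf A_!}(F)$, I would apply Lemma~\ref{factoexplicit} to the epimorphism $\coprod_j \widetilde{\mathbf A_!}(X_j) \to \widetilde{\mathbf A_!}(F)$ to obtain a spectral $\mathcal N$-cover $\{Y_k \to Y\}$ over which the map factors through some $\widetilde{\mathbf A_!}(X_{w(k)})$, so that $\widetilde{\mathbf A_!}(X_i) \times_{\widetilde{\mathbf A_!}(F)} Y_k$ becomes a base change of $\widetilde{\mathbf A_!}(X_i) \times_{\widetilde{\mathbf A_!}(F)} \widetilde{\mathbf A_!}(X_{w(k)})$ along $Y_k \to \widetilde{\mathbf A_!}(X_{w(k)})$. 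The key computation is that this latter pullback is $\widetilde{\mathbf A_!}$ of $R_{i,w(k)} = X_i \times_F X_{w(k)}$: by condition (i) of Theorem~\ref{equivalencerel}, $R_{i,w(k)}$ is Zariski open relative to $\mathcal M$ in the affine $X_{w(k)}$, hence presentable as a coequalizer of affines and their affine fibre products over $X_{w(k)}$; since $\widetilde{\mathbf A_!}$ preserves this coequalizer and, on affines, preserves these fibre products and sends the defining spectral immersions to spectral immersions relative to $\mathcal N$ (Lemma~\ref{lemcont}(2)), the image $\widetilde{\mathbf A_!}(R_{i,w(k)})$ is Zariski open relative to $\mathcal N$ in the affine $\widetilde{\mathbf A_!}(X_{w(k)})$. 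Stability under base change (Lemma~\ref{lema4.2}), Proposition~\ref{oneimpliesother}, and Theorem~\ref{zaropenim}(2) then show $\widetilde{\mathbf A_!}(X_i) \times_{\widetilde{\mathbf A_!}(F)} Y_k \to Y_k$ is a monomorphism with Zariski open image relative to $\mathcal N$; finally I would glue these local conclusions over the cover $\{Y_k \to Y\}$ using Lemma~\ref{grotop}, Lemma~\ref{coverinducepiintopos}, and descent of monomorphisms in the topos $Sh(Aff_{\mathcal D})_{\mathcal N}$, to deduce that $\widetilde{\mathbf A_!}(X_i) \times_{\widetilde{\mathbf A_!}(F)} Y \to Y$ has the same properties, and hence that $\{\widetilde{\mathbf A_!}(X_i) \to \widetilde{\mathbf A_!}(F)\}$ is an affine Zariski $\mathcal N$-covering.
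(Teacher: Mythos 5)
Your parts (1) and (2) match the paper's argument essentially verbatim: the commuting square $\alpha_i^*\circ\mathbb L^a\cong\mathbb L^{a_i}\circ\mathbf B(\alpha_i)^*$ coming from the hypothesis on \eqref{canonic}, the transfer of flatness and conservativity through the conservative, finite-limit-preserving $\mathbb L^a$, and the definition of $\widetilde{\mathbf A_!}$ as sheafification of $\mathbf A_!$ are all exactly what the paper does. The problem is in part (3), and it begins with your premise that ``$\widetilde{\mathbf A_!}$ is only a left adjoint, so it does not preserve the monomorphisms and pullbacks.'' This is false, and recognizing that it is false is the key step of the paper's proof: $\mathbf A_!=\_\_\circ\mathbf A$ is precomposition, so it preserves \emph{all} limits of presheaves (these are computed objectwise), and the sheafification $(\_\_)^{++}_{\mathcal N}$ preserves finite limits; hence $\widetilde{\mathbf A_!}$ preserves finite limits and in particular monomorphisms. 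The paper then runs the argument through Theorem \ref{equivalencerel}: write $F\cong H/R$ with $H=\coprod_i U_i$ affine, observe that $\widetilde{\mathbf A_!}(R)$ is again an equivalence relation with $\widetilde{\mathbf A_!}(F)\cong\widetilde{\mathbf A_!}(H)/\widetilde{\mathbf A_!}(R)$, and check conditions (i)--(iii) using that $\widetilde{\mathbf A_!}$ restricts to $\mathbf B^{op}$ on affines and sends spectral immersions to spectral immersions.

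The workaround you build to avoid left-exactness does not close the gap, because it secretly uses left-exactness at its central step. Your ``key computation'' asserts that
$\widetilde{\mathbf A_!}(X_i)\times_{\widetilde{\mathbf A_!}(F)}\widetilde{\mathbf A_!}(X_{w(k)})$ equals $\widetilde{\mathbf A_!}(X_i\times_F X_{w(k)})$; this is precisely an instance of $\widetilde{\mathbf A_!}$ preserving a (non-affine) pullback over $\widetilde{\mathbf A_!}(F)$, which on your own premise is unavailable, and you offer no independent justification. The coequalizer presentation you invoke only shows that $\widetilde{\mathbf A_!}(R_{i,w(k)})$ is Zariski open relative to $\mathcal N$ inside $\widetilde{\mathbf A_!}(X_{w(k)})$; it does not identify that subobject with the pullback $\widetilde{\mathbf A_!}(X_i)\times_{\widetilde{\mathbf A_!}(F)}\widetilde{\mathbf A_!}(X_{w(k)})$, which is what Definition \ref{zariskiopenforsheaves}(2) requires. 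Similarly, your appeal to ``descent of monomorphisms'' to glue over the cover $\{Y_k\to Y\}$ is left unsubstantiated, whereas it is simply unnecessary once one knows $\widetilde{\mathbf A_!}$ preserves monomorphisms. To repair the proof, replace the workaround by the observation that $\widetilde{\mathbf A_!}$ is left exact (as the composite in \eqref{equat5.20} of limit-preserving precomposition and finite-limit-preserving sheafification); at that point either your covering-based argument or the paper's route via Theorem \ref{equivalencerel} goes through.
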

\begin{proof}
(1) Since $\mathbb L : \mathbf B_*(\mathcal N) \longrightarrow \mathcal M$ is conservative and preserves finite limits, it follows from Remark $\ref{creation}$ that for each $a \in Comm(\mathcal C)$, the functor $\mathbb L = \mathbb L^a : \mathbf{Mod}_{\mathbf B_*(\mathcal N)}(a) \longrightarrow \mathbf{Mod}_{\mathcal M}(a)$ in \eqref{modadjunct} is also conservative and preserves finite limits. Let $\{\alpha_i^{op} : Spec(a_i) \longrightarrow Spec(a)\}_{i \in I}$ be an fpqc $\mathcal M$-cover in $Aff_{\mathcal C}$.
We need to show that
\begin{equation} 
\{\mathbf B^{op}(\alpha_i^{op}) : \mathbf B^{op}(Spec(a_i)) \longrightarrow \mathbf B^{op}(Spec(a))\}_{i \in I}
\end{equation}is an fpqc $\mathcal N$-cover in $Aff_{\mathcal D}$. By definition, each $\alpha_i : a \longrightarrow a_i$ is $\mathcal M$-flat and there exists a finite subset $J \subseteq I$ such that the functor $((\alpha_j)^*)_{j \in J} = (a_j \boxtimes_a \_\_)_{j \in J} : \mathbf{Mod}_{\mathcal M}(a) \longrightarrow \prod_{j \in J} \mathbf{Mod}_{\mathcal M}(a_j)$ is conservative. For each $i \in I$, we know that $\alpha_i:a\longrightarrow a_i$ is $\mathcal M$-flat and hence the following square
\begin{equation}\label{diagrmm5.16}
\begin{tikzcd}
\mathbf{Mod}_{\mathbf B_*(\mathcal N)}(a) = \mathbf{Mod}_{\mathcal N}(\mathbf B(a)) \arrow[rr, "\mathbb L^a"] \arrow[d, "\mathbf B(\alpha_i)^*"'] &  & \mathbf{Mod}_{\mathcal M}(a) \arrow[d, "\alpha_i^*"] \\
\mathbf{Mod}_{\mathbf B_*(\mathcal N)}(a_i) = \mathbf{Mod}_{\mathcal N}(\mathbf B(a_i)) \arrow[rr, "\mathbb L^{a_i}"']                                &  & \mathbf{Mod}_{\mathcal M}(a_i)                      
\end{tikzcd}
\end{equation}commutes up to a natural isomorphism. Since $\mathbb L^a$, $\mathbb L^{a_i}$ preserve finite limits, $\alpha_i$ is $\mathcal M$-flat (i.e., 
$\alpha_i^*$ preserves finite limits) and $\mathbb L^{a_i}$ is conservative, it follows from \eqref{diagrmm5.16} that $\mathbf B(\alpha_i)^*$ preserves finite limits, i.e.,  $ \mathbf B(\alpha_i)$ is $\mathcal N$-flat.

\smallskip
Further, since the square \eqref{diagrmm5.16} commutes up to a natural isomorphism for each $i \in J \subseteq I$, the following diagram
\begin{equation}\label{525t}
\begin{tikzcd}
\mathbf{Mod}_{\mathcal N}(\mathbf B(a)) \arrow[rr, "\mathbb L^a"] \arrow[d, "(\mathbf B(\alpha_j)^*)_{j \in J}"'] &  & \mathbf{Mod}_{\mathcal M}(a) \arrow[d, "(\alpha_j^*)_{j \in J}"] \\
\prod_{j \in J}\mathbf{Mod}_{\mathcal N}(\mathbf B(a_j)) \arrow[rr, "\prod_{j \in J}\mathbb L^{a_j}"']                &  & \prod_{j \in J} \mathbf{Mod}_{\mathcal M}(a_j)                  
\end{tikzcd}
\end{equation}also commutes up to a natural isomorphism. Since $\mathbb L^a$, $(\alpha_j^*)_{j\in J}$ and $\prod_{j \in J}\mathbb L^{a_j}$ are conservative, it follows from 
\eqref{525t} that 
\begin{equation} 
(\mathbf B(\alpha_j)^*)_{j \in J} : \mathbf{Mod}_{\mathcal N}(\mathbf B(a)) \longrightarrow \prod_{j \in J}\mathbf{Mod}_{\mathcal N}(\mathbf B(a_j))
\end{equation}is conservative. Hence, $\{\mathbf B^{op}(\alpha_i^{op}) : \mathbf B^{op}(Spec(a_i)) \longrightarrow \mathbf B^{op}(Spec(a))\}_{i \in I}$ is an fpqc $\mathcal N$-cover in $Aff_{\mathcal D}$.

\smallskip
(2) Using part (1) and Lemma \ref{lemcont}, it follows that $\mathbf B^{op} : Aff_{\mathcal C} \longrightarrow Aff_{\mathcal D}$ takes spectral $\mathcal M$-covers to spectral $\mathcal N$-covers. Since $\mathbf B^{op}$ preserves pullbacks, it follows that the functor $\mathbf B_! = \_\_ \circ \mathbf B : PSh(Aff_{\mathcal D}) \longrightarrow PSh(Aff_{\mathcal C})$ in \eqref{equt5.7} restricts to a functor
\begin{equation}
   \widetilde{ \mathbf B_!} : Sh(Aff_{\mathcal D})_{\mathcal N} \longrightarrow Sh(Aff_{\mathcal C})_{\mathcal M}
\end{equation}We define the functor $\widetilde{\mathbf A_!} : Sh(Aff_{\mathcal C})_{\mathcal M} \longrightarrow Sh(Aff_{\mathcal D})_{\mathcal N}$ as the composition
\begin{equation}\label{equat5.20}
    Sh(Aff_{\mathcal C})_{\mathcal M} \hookrightarrow PSh(Aff_{\mathcal C}) \xrightarrow{\text{ }\mathbf A_!\text{ }=\text{ }\_\_ \circ \mathbf A\text{ }} PSh(Aff_{\mathcal D}) \xrightarrow{\text{ }(\_\_)^{++}_{\mathcal N}\text{ }} Sh(Aff_{\mathcal D})_{\mathcal N}
\end{equation}where $(\_\_)^{++}_{\mathcal N}$ is the sheafification functor associated to the spectral $\mathcal N$-topology on $Aff_{\mathcal D}$. Since  $(\_\_)^{++}_{\mathcal N}$ has a right adjoint and $(\mathbf A_!,\mathbf B_!)$ is an adjoint pair as in \eqref{equt5.7}, we have  
\begin{equation}
  \begin{split}
   Sh(Aff_{\mathcal D})_{\mathcal N}(\widetilde{\mathbf A_!}(F), G) &= Sh(Aff_{\mathcal D})_{\mathcal N}(\mathbf A_!(F)^{++}_{\mathcal N}, G)\\
   &\cong PSh(Aff_{\mathcal D})(\mathbf A_!(F), G)\\ 
   &\cong PSh(Aff_{\mathcal C})(F, \mathbf B_!(G)) = Sh(Aff_{\mathcal C})_{\mathcal M}(F, \widetilde{\mathbf B_!}(G))
   \end{split}
\end{equation} for $F \in Sh(Aff_{\mathcal C})_{\mathcal M}$ and $G \in Sh(Aff_{\mathcal D})_{\mathcal N}$.  

\smallskip
(3) Since $\mathcal M$ is subcanonical, $Aff_{\mathcal C}$ is a full subcategory of $Sh(Aff_{\mathcal C})_{\mathcal M}$. For $Spec(a)\in Aff_{\mathcal C}$, we have natural isomorphisms
\begin{equation}\label{eniq5.29}
   \begin{split}
    \widetilde{\mathbf A_!}(Aff_{\mathcal C}(\_\_, Spec(a)) &= \left(Aff_{\mathcal C}(\mathbf A^{op}(\_\_), Spec(a))\right)^{++}_{\mathcal N}\\
    &\cong (Aff_{\mathcal D}(\_\_, \mathbf B^{op}(Spec(a))))^{++}_{\mathcal N}\quad[\text{using the adjunction in }\eqref{eqtunio5.3}]\\
    &\cong Aff_{\mathcal D}(\_\_, \mathbf B^{op}(Spec(a)))\qquad[\text{since }\mathcal N\text{ is subcanonical}]
    \end{split}
\end{equation}  Hence, $\widetilde{\mathbf A_!} : Sh(Aff_{\mathcal C})_{\mathcal M} \longrightarrow Sh(Aff_{\mathcal D})_{\mathcal N}$ restricts to $\mathbf B^{op} : Aff_{\mathcal C} \longrightarrow Aff_{\mathcal D}$ on $Aff_{\mathcal C} \subset Sh(Aff_{\mathcal C})_{\mathcal M}$.

\smallskip
Now, let $F \in Sch(\mathcal C)_{\mathcal M} \subseteq Sh(Aff_{\mathcal C})_{\mathcal M}$. We need to show that $\widetilde{\mathbf A_!}(F) \in Sh(Aff_{\mathcal D})_{\mathcal N}$ is an $\mathcal N$-scheme. It follows from Proposition $\ref{equivalencerel}$ that there exists a family $\{U_i\}_{i \in I}$ of affines in $Aff_{\mathcal C}$ and an equivalence relation $R \subseteq H \times H$ on $H := \coprod_{i \in I} U_i$  such that $F \cong H/R$ in $Sh(Aff_{\mathcal C})_{\mathcal M}$. Further, for each $i, j \in I$, the composition $R_{i, j} := R \times_{H \times H} (U_i \times U_j) \xrightarrow{\text{ }r_{i, j}\text{ }} U_i \times U_j \longrightarrow U_i$ is a Zariski open immersion relative to $\mathcal M$ and the subobject $R_{i, i} \subseteq U_i \times U_i$ is the image of $U_i \xrightarrow{\text{ }(1_{U_i}, 1_{U_i})\text{ }} U_i \times U_i$ in $Sh(Aff_{\mathcal C})_{\mathcal M}$.

\smallskip
Since limits of presheaves are computed objectwise, we see that $\mathbf A_! = \_\_ \circ \mathbf A  : PSh(Aff_{\mathcal C}) \longrightarrow PSh(Aff_{\mathcal D})$ preserves limits. Further, since the sheafication functor $(\_\_)^{++}_{\mathcal N}$ preserves finite limits, it follows from $\eqref{equat5.20}$ that $\widetilde{\mathbf A_!} : Sh(Aff_{\mathcal C})_{\mathcal M} \longrightarrow Sh(Aff_{\mathcal D})_{\mathcal N}$ preserves finite limits and in particular, monomorphisms. Also, since $\widetilde{\mathbf A_!}$ is a left adjoint, it preserves colimits and in particular, epimorphisms. Hence, $\widetilde{\mathbf A_!}(R) \subseteq \widetilde{\mathbf A_!}(H) \times \widetilde{\mathbf A_!}(H)$ is an equivalence relation on $\widetilde{\mathbf A_!}(H)$ and
\begin{equation}
    \widetilde{\mathbf A_!}(F) \cong \widetilde{\mathbf A_!}(H)/\widetilde{\mathbf A_!}(R) \cong \left(\coprod_{i \in I}\widetilde{\mathbf A_!}(U_i)\right)/\widetilde{\mathbf A_!}(R)
\end{equation}in $Sh(Aff_{\mathcal D})_{\mathcal N}$. We will verify that the equivalence relation $\widetilde{\mathbf A_!}(R)$ satisfies the conditions of Proposition $\ref{equivalencerel}$. Using \eqref{eniq5.29}, it follows that for each $i \in I$, $\widetilde{\mathbf A_!}(U_i) = \mathbf B^{op}(U_i) \in Aff_{\mathcal D}$ is an affine. Further,  $\widetilde{\mathbf A_!}(R_{i, i}) \subseteq \widetilde{\mathbf A_!}(U_i) \times \widetilde{\mathbf A_!}(U_i)$ is the image of $(1_{\widetilde{\mathbf A_!}(U_i)}, 1_{\widetilde{\mathbf A_!}(U_i)}) : \widetilde{\mathbf A_!}(U_i) \longrightarrow \widetilde{\mathbf A_!}(U_i) \times \widetilde{\mathbf A_!}(U_i)$ in $Sh(Aff_{\mathcal D})_{\mathcal N}$.

Now, for each $i, j \in I$, since $R_{i, j} \xrightarrow{\text{ }r_{i, j}\text{ }} U_i \times U_j \longrightarrow U_i$ is a Zariski open immersion relative to $\mathcal M$, it follows from Proposition $\ref{zaropenim}$ that its image is Zariski open relative to $\mathcal M$. Hence, there exists a family $\{\beta_{ijk}^{op} : Y_{ijk} \longrightarrow U_i\}_{k \in K_{ij}}$ of spectral immersions relative to $\mathcal M$ such that the image of the induced morphism $\coprod_{k \in K_{ij}} \beta_{ijk}^{op} : \coprod_{k \in K_{ij}} Y_{ijk} \longrightarrow U_i$ is $R_{ij}$. Since $\widetilde{\mathbf A_!}$ preserves colimits and finite limits and restricts to the functor $\mathbf B^{op} : Aff_{\mathcal C} \longrightarrow Aff_{\mathcal D}$ on the subcategory $Aff_{\mathcal C} \subset Sh(Aff_{\mathcal C})_{\mathcal M}$, it follows from part (1) and Lemma \ref{lemcont} that
\begin{equation} 
\left\{\widetilde{\mathbf A_!}(Y_{ijk}) \cong \mathbf B^{op}(Y_{ijk}) \xrightarrow{\text{ }\widetilde{\mathbf A_!}(\beta_{ijk}^{op}) = \mathbf B^{op}(\beta_{ijk}^{op})\text{ }} \mathbf B^{op}(U_i) = \widetilde{\mathbf A_!}(U_i)\right\}_{k \in K_{ij}}
\end{equation}is a family of spectral immersions relative to $\mathcal N$ such that the image of the induced morphism $\coprod_{k \in K_{ij}} \widetilde{\mathbf A_!}(Y_{ijk}) \longrightarrow \widetilde{\mathbf A_!}(U_i)$ is $\widetilde{\mathbf A_!}(R_{ij})$. Hence, the image of the monomorphism
\begin{equation}\label{equq5.31} 
\widetilde{\mathbf A_!}\left(R_{i, j} \xrightarrow{\text{ }r_{i, j}\text{ }} U_i \times U_j \longrightarrow U_i\right) = \left(\widetilde{\mathbf A_!}(R_{ij}) \xrightarrow{\text{ }\widetilde{\mathbf A_!}(r_{i, j})\text{ }} \widetilde{\mathbf A_!}(U_i \times U_j) \cong \widetilde{\mathbf A_!}(U_i) \times \widetilde{\mathbf A_!}(U_j) \longrightarrow \widetilde{\mathbf A_!}(U_i)\right)
\end{equation}is Zariski open relative to $\mathcal N$. Since $\mathcal N$ is subcanonical, it follows from Proposition $\ref{zaropenim}$ that the monomorphism in \eqref{equq5.31} is a Zariski open immersion relative to $\mathcal N$. Using Proposition $\ref{equivalencerel}$, it follows that $\widetilde{\mathbf A_!}(F)$ is an $\mathcal N$-scheme. This concludes the proof.
\end{proof}
\begin{cor}\label{coro5.4}
Let $\mathcal E$ be a closed symmetric monoidal category which is both complete and cocomplete. Let $(\mathcal P, \boxtimes^{\mathcal P})$ $(\text{resp. }(\mathcal Q, \boxtimes^{\mathcal Q}))$ be a left $\mathcal E$-actegory which is both complete and cocomplete and such that $\boxtimes^{\mathcal P} : \mathcal E \times \mathcal P \longrightarrow \mathcal P$ $(\text{resp. }\boxtimes^{\mathcal Q} : \mathcal E \times \mathcal Q \longrightarrow \mathcal Q)$ preserves colimits in both variables. Let $\mathbb S : \mathcal Q \longrightarrow \mathcal P$ be a lax $\mathcal E$-linear functor which is conservative and preserves finite limits. Further, suppose that for every $\mathcal P$-flat morphism $\alpha : a \longrightarrow b$ in $Comm(\mathcal E)$ and $n \in \mathbf{Mod}_{\mathcal Q}(a)$, the canonical morphism $\theta_{\alpha, id_{\mathcal E}, \mathbb S, n} : b \boxtimes^{\mathcal P}_a \mathbb S(n) \longrightarrow \mathbb S(b \boxtimes^{\mathcal Q}_a n)$ is an isomorphism in $\mathbf{Mod}_{\mathcal P}(b)$.

\smallskip
If $\mathcal Q$ is subcanonical, then so is $\mathcal P$ and for each $F \in Sch(\mathcal E)_{\mathcal P} \subset PSh(Aff_{\mathcal E})$, the sheafification $(F)^{++}_{\mathcal Q}$ of $F$ with respect to the spectral $\mathcal Q$-topology is a $\mathcal Q$-scheme.
\end{cor}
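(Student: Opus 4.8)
The plan is to deduce the statement from Theorem \ref{basechangingscheme} by specializing the base-change datum to the identity adjunction. First I would set $\mathcal C = \mathcal D = \mathcal E$ and take $(\mathbf B, \mathbf A) = (id_{\mathcal E}, id_{\mathcal E})$ in \eqref{monadjoint}, together with $(\mathcal M, \boxtimes) = (\mathcal P, \boxtimes^{\mathcal P})$, $(\mathcal N, \boxplus) = (\mathcal Q, \boxtimes^{\mathcal Q})$ and $(\mathbb L, \Gamma) = \mathbb S$. Then I would check that all the hypotheses of Theorem \ref{basechangingscheme} are met: the identity functor $id_{\mathcal E}$ is strong symmetric monoidal; since $\mathbf B = id_{\mathcal E}$ we have $\mathbf B_*(\mathcal Q) = (\mathcal Q, \boxtimes^{\mathcal Q})$, so $\mathbb S : \mathcal Q \to \mathcal P$ is precisely a lax $\mathcal E$-linear functor $\mathbf B_*(\mathcal N) \to (\mathcal M, \boxtimes)$ as in \eqref{actadjunctv}; the functor $\mathbf A = id_{\mathcal E}$ preserves filtered colimits trivially; $\mathbb S$ is conservative and preserves finite limits by hypothesis; and the remaining compatibility, that $b \boxtimes_a \mathbb L(n) \to \mathbb L(\mathbf B(b) \boxplus_{\mathbf B(a)} n)$ be an isomorphism for every $\mathcal M$-flat $a \to b$ and $n \in \mathbf{Mod}_{\mathcal N}(\mathbf B(a))$, is exactly the assumed isomorphism $\theta_{\alpha, id_{\mathcal E}, \mathbb S, n} : b \boxtimes^{\mathcal P}_a \mathbb S(n) \to \mathbb S(b \boxtimes^{\mathcal Q}_a n)$, since here $\mathbf B(a) = a$ and $\mathbf{Mod}_{\mathcal N}(\mathbf B(a)) = \mathbf{Mod}_{\mathcal Q}(a)$.

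Granting these, parts (1) and (2) of Theorem \ref{basechangingscheme} apply without any subcanonicity assumption. Part (1) tells me that the identity functor $\mathbf B^{op} = id : Aff_{\mathcal E} \to Aff_{\mathcal E}$ carries every fpqc $\mathcal P$-cover to an fpqc $\mathcal Q$-cover; equivalently, every fpqc $\mathcal P$-cover is itself an fpqc $\mathcal Q$-cover, so the fpqc $\mathcal Q$-topology is at least as fine as the fpqc $\mathcal P$-topology. The key deduction is then that sheafhood descends along this comparison: a presheaf satisfying descent for all fpqc $\mathcal Q$-covers automatically satisfies descent for the smaller family of fpqc $\mathcal P$-covers, so every fpqc $\mathcal Q$-sheaf is an fpqc $\mathcal P$-sheaf. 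Since $\mathcal Q$ is subcanonical, each representable $Aff_{\mathcal E}(\_\_, Spec(a))$ is an fpqc $\mathcal Q$-sheaf, hence an fpqc $\mathcal P$-sheaf, and therefore $\mathcal P$ is subcanonical (so in particular the spectral $\mathcal P$-topology is subcanonical as well). This is the only nonformal step, and it is the main point to get right: one must track the direction of the inclusion of covering families and use that it is sheafhood for the finer topology that passes to the coarser one.

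With both $\mathcal P$ and $\mathcal Q$ now subcanonical, I would finally invoke part (3) of Theorem \ref{basechangingscheme}, which yields a functor $\widetilde{\mathbf A_!} : Sch(\mathcal E)_{\mathcal P} \to Sch(\mathcal E)_{\mathcal Q}$. To match the statement it remains to identify $\widetilde{\mathbf A_!}$ with spectral $\mathcal Q$-sheafification. By its definition \eqref{equat5.20}, $\widetilde{\mathbf A_!}$ is the composite of the inclusion $Sh(Aff_{\mathcal E})_{\mathcal P} \hookrightarrow PSh(Aff_{\mathcal E})$, the functor $\mathbf A_! = \_\_ \circ \mathbf A$ of \eqref{equt5.7}, and the spectral $\mathcal Q$-sheafification $(\_\_)^{++}_{\mathcal Q}$. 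Because $\mathbf A = id_{\mathcal E}$ induces the identity on $Comm(\mathcal E)$, the functor $\mathbf A_! = \_\_ \circ \mathbf A$ is the identity of $PSh(Aff_{\mathcal E})$, whence $\widetilde{\mathbf A_!}(F) = (F)^{++}_{\mathcal Q}$ for every $F \in Sh(Aff_{\mathcal E})_{\mathcal P}$. Consequently, for any $\mathcal P$-scheme $F \in Sch(\mathcal E)_{\mathcal P}$ the sheafification $(F)^{++}_{\mathcal Q} = \widetilde{\mathbf A_!}(F)$ with respect to the spectral $\mathcal Q$-topology is a $\mathcal Q$-scheme, which is exactly the assertion. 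Beyond this identification, no additional computation is required.
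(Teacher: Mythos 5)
Your proposal is correct and follows essentially the same route as the paper, whose proof is exactly the one-line application of Theorem \ref{basechangingscheme} to the adjoint pair $(id_{\mathcal E}, id_{\mathcal E})$; you have merely spelled out the details the paper leaves implicit, including the (correctly oriented) deduction that since every fpqc $\mathcal P$-cover is an fpqc $\mathcal Q$-cover, sheaves for the finer $\mathcal Q$-topology are sheaves for the coarser $\mathcal P$-topology, and the identification $\widetilde{\mathbf A_!} = (\_\_)^{++}_{\mathcal Q}$.
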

\begin{proof}
The result follows from an application of Theorem $\ref{basechangingscheme}$ to the adjoint pair $(id_{\mathcal E}, id_{\mathcal E})$.
\end{proof}
\section{Examples}

In this section, we give a number of examples of subcanonical spectral topologies arising from our setup. 

\begin{examples}\label{examp1}
\emph{Let $(\mathcal C, \otimes, 1)$ be a closed symmetric monoidal category which is both complete and cocomplete. Then $\mathcal C$ may be treated as a left 
$\mathcal C$-actegory.  It follows from \cite[Corollary 2.11]{TV} that representable presheaves on $Aff_{\mathcal C}$ are also  fpqc sheaves, which means that  $\mathcal C$ is subcanonical as a $\mathcal C$-actegory in the sense of Section 4. It is also clear that schemes over $\mathcal C$ treated as a left $\mathcal C$-actegory are precisely the schemes relative to $(\mathcal C, \otimes,1)$  in the sense of To\"{e}n and Vaqui\'{e} \cite{TV}.}
\end{examples}

\begin{examples}\label{examp3}
\emph{Let $(\mathcal C, \otimes, 1)$ be a closed symmetric monoidal category which is both complete and cocomplete. We fix a set $J$, considered as a discrete category. It is clear that the functor category $[J, \mathcal C] = \prod_J \mathcal C$ is both complete and cocomplete. We note that $[J, \mathcal C]$ has a canonical left $(\mathcal C, \otimes, 1)$-actegory structure $\overset{J}{\otimes} : \mathcal C \times [J, \mathcal C] \longrightarrow [J, \mathcal C]$ defined as follows : for each $c \in \mathcal C$ and $M \in [J, \mathcal C]$
\begin{equation}\label{eqew5.34}
c \overset{J}{\otimes} M : J \longrightarrow \mathcal C,\quad(J \ni j \mapsto c \otimes M(j) \in \mathcal C)
\end{equation}Since colimits in functor categories are computed objectwise and $\otimes : \mathcal C \times \mathcal C \longrightarrow \mathcal C$ preserves colimits in both variables, hence so does the bifunctor $\overset{J}{\otimes} : \mathcal C \times [J, \mathcal C] \longrightarrow [J, \mathcal C]$ given by \eqref{eqew5.34}. We will denote the left $(\mathcal C, \otimes, 1)$-actegory $([J, \mathcal C], \overset{J}{\otimes})$ simply by $[J, \mathcal C]$.}

\smallskip
\emph{It is clear that for each $a \in Comm(\mathcal C)$, $\mathbf{Mod}_{[J, \mathcal C]}(a) \cong [J, \mathbf{Mod}_{\mathcal C}(a)]$. Further, for each morphism $\alpha : a \longrightarrow b$ in $Comm(\mathcal C)$, it may be verified that the extension of scalars $b \overset{J}{\otimes}_a \_\_ : \mathbf{Mod}_{[J, \mathcal C]}(a) \longrightarrow \mathbf{Mod}_{[J, \mathcal C]}(b)$ associated to $[J, \mathcal C]$ is the composition
\begin{equation}\label{eqew5.35}
    \mathbf{Mod}_{[J, \mathcal C]}(a) \cong [J, \mathbf{Mod}_{\mathcal C}(a)] \xrightarrow{\text{ }\alpha^* \circ \_\_\text{ }} [J, \mathbf{Mod}_{\mathcal C}(b)] \cong \mathbf{Mod}_{[J, \mathcal C]}(b)
\end{equation}where $\alpha^* = b \otimes_a \_\_ : \mathbf{Mod}_{\mathcal C}(a) \longrightarrow \mathbf{Mod}_{\mathcal C}(b)$ is the extension of scalars associated to $\mathcal C$. Since limits in functor categories are computed objectwise, it follows that $\alpha : a \longrightarrow b$ in $Comm(\mathcal C)$ is $[J, \mathcal C]$-flat if and only if it is $\mathcal C$-flat. Using \eqref{eqew5.35}, it may also be verified that fpqc (resp. spectral) $[J, \mathcal C]$-covers are the same as fpqc (resp. spectral) $\mathcal C$-covers in $Aff_{\mathcal C}$. It again follows from Example \ref{examp1} that $[J, \mathcal C]$ is subcanonical. Further, $[J, \mathcal C]$-schemes are the same as $\mathcal C$-schemes.} 
\end{examples}
\begin{rem}
    \emph{Let $(\mathcal M, \boxtimes)$ be a subcanonical left $(\mathcal C, \otimes, 1)$-actegory. Using the same arguments as that of Example \ref{examp3}, it may be verified that the bifunctor $\overset{J}{\boxtimes} : \mathcal C \times [J, \mathcal M] \longrightarrow [J, \mathcal M]$ defined by 
\begin{equation}
c \overset{J}{\boxtimes} M : J \longrightarrow \mathcal M,\quad(J \ni j \mapsto c \boxtimes M(j) \in \mathcal M)\qquad \left(c \in \mathcal C\text{ and }M \in [J, \mathcal M]\right)
\end{equation}makes $[J, \mathcal M]$ into a subcanonical $(\mathcal C, \otimes, 1)$-actegory. Further, $[J, \mathcal M]$-schemes are the same as $\mathcal M$-schemes.}
\end{rem}
In what follows, we will denote the category of presheaves of sets on an arbitrary small category $\mathcal Z$ by $\widehat{\mathcal Z} := PSh(\mathcal Z)$. We note that the category $\widehat{\mathcal Z}$ is both complete and cocomplete. For $F, G \in \widehat{\mathcal Z}$, let $F \bullet G \in \widehat{\mathcal Z}$ denote their   product in $PSh(\mathcal Z)$. We recall (see for instance, \cite[$\S$ 1.6]{MM}) that $(\widehat{\mathcal Z}, \bullet, 1)$ is a cartesian closed monoidal category where $1 \in \widehat{\mathcal Z}$ denotes the terminal object. It is clear from Example \ref{examp1} that the left $(\widehat{\mathcal Z}, \bullet, 1)$-actegory $(\widehat{\mathcal Z}, \bullet)$ is subcanonical.
\begin{thm}\label{preshex}
Let $\Phi : \mathcal X \longrightarrow \mathcal Y$ be an essentially surjective functor between small categories. Then, the bifunctor
\begin{equation}\label{quaton5.37} 
\boxtimes : \widehat{\mathcal Y} \times \widehat{\mathcal X} \longrightarrow \widehat{\mathcal X},\quad(F, H) \mapsto \left((F \circ \Phi^{op}) \bullet H\right) \in \widehat{\mathcal X}
\end{equation}preserves colimits in both variables and makes $\widehat{\mathcal X}$ into a left $(\widehat{\mathcal Y}, \bullet, 1)$-actegory. Further, $(\widehat{\mathcal X}, \boxtimes)$ is subcanonical and for any $(\widehat{\mathcal X}, \boxtimes)$-scheme $F$ on $Aff_{(\widehat{\mathcal Y}, \bullet, 1)}$, the sheafification $(F)^{++}_{(\widehat{\mathcal Y}, \bullet)}$ of $F$ with respect to the spectral $(\widehat{\mathcal Y}, \bullet)$-topology on $Aff_{(\widehat{\mathcal Y}, \bullet, 1)}$ is a $(\widehat{\mathcal Y}, \bullet)$-scheme.
\end{thm}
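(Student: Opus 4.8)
The plan is to recognize $(\widehat{\mathcal X}, \boxtimes)$ as the restriction of the canonical self-action of $\widehat{\mathcal X}$ along a strong monoidal functor, and then to invoke Corollary \ref{coro5.4} with the base held fixed at $\widehat{\mathcal Y}$. Write $\Phi^\ast : \widehat{\mathcal Y} \longrightarrow \widehat{\mathcal X}$ for the restriction functor $F \mapsto F \circ \Phi^{op}$. Since limits and products of presheaves are computed objectwise, precomposition commutes with the cartesian product, so $\Phi^\ast$ is \emph{strong symmetric monoidal}: $\Phi^\ast(F \bullet G) \cong \Phi^\ast(F) \bullet \Phi^\ast(G)$ and $\Phi^\ast(1) \cong 1$. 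Consequently the bifunctor of \eqref{quaton5.37} is exactly $F \boxtimes H = \Phi^\ast(F) \bullet H$, i.e. the restriction of the self-action $\bullet$ of $\widehat{\mathcal X}$ along $\Phi^\ast$ in the sense of \eqref{restricting}. By that restriction construction this makes $\widehat{\mathcal X}$ a left $(\widehat{\mathcal Y}, \bullet, 1)$-actegory, which settles the actegory claim; and since $\Phi^\ast$ preserves colimits (it has a right adjoint given by the right Kan extension) while $\bullet$ preserves colimits in each variable on the cartesian closed category $\widehat{\mathcal X}$, the composite $\boxtimes$ preserves colimits in both variables.

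To obtain subcanonicity and the scheme statement, I would apply Corollary \ref{coro5.4} with $\mathcal E = (\widehat{\mathcal Y}, \bullet, 1)$, with $\mathcal Q = (\widehat{\mathcal Y}, \bullet)$ the self-action, with $\mathcal P = (\widehat{\mathcal X}, \boxtimes)$, and with $\mathbb S = \Phi^\ast : \mathcal Q \longrightarrow \mathcal P$. The conceptual point is that one does \emph{not} change the base category: both actegories sit over $\widehat{\mathcal Y}$, and $\Phi^\ast$ is a lax $\widehat{\mathcal Y}$-linear functor between them. In fact, because $\Phi^\ast$ is strong monoidal, the structural transformation $c \boxtimes \Phi^\ast(H) = \Phi^\ast(c) \bullet \Phi^\ast(H) \cong \Phi^\ast(c \bullet H)$ is an isomorphism, so $\Phi^\ast$ is even \emph{strong} $\widehat{\mathcal Y}$-linear. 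It then remains to check the hypotheses of Corollary \ref{coro5.4}: that $\Phi^\ast$ preserves finite limits (immediate, as restriction preserves all limits), that $\Phi^\ast$ is conservative, and that the comparison map $\theta_{\alpha, \mathrm{id}, \Phi^\ast, n}$ of \eqref{canonic} is an isomorphism. The last is automatic from Lemma \ref{crucial}(2), since $\Phi^\ast$ preserves coequalizers and is strong linear. Finally $\mathcal Q = (\widehat{\mathcal Y}, \bullet)$ is subcanonical by Example \ref{examp1}. Corollary \ref{coro5.4} then yields at once that $\mathcal P = (\widehat{\mathcal X}, \boxtimes)$ is subcanonical and that the sheafification $(F)^{++}_{(\widehat{\mathcal Y}, \bullet)}$ of any $(\widehat{\mathcal X}, \boxtimes)$-scheme $F$ is a $(\widehat{\mathcal Y}, \bullet)$-scheme.

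The only substantive point — and the place where the hypothesis on $\Phi$ enters — is the conservativity of $\Phi^\ast$. For a morphism $f : F \to G$ of presheaves on $\mathcal Y$, the functor $\Phi^\ast(f)$ is an isomorphism iff $f_{\Phi(x)}$ is a bijection for every $x \in \mathcal X$; since $\Phi$ is essentially surjective, every $y \in \mathcal Y$ is isomorphic to some $\Phi(x)$, and naturality of $f$ together with the fact that $F$ and $G$ carry isomorphisms of $\mathcal Y$ to bijections forces $f_y$ to be a bijection for all $y$, whence $f$ is an isomorphism. I expect this essential-surjectivity argument to be the main (though short) obstacle; everything else is a matter of matching the present data to the framework of Section 5 and reading off Corollary \ref{coro5.4}.
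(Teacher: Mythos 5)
Your proposal is correct and follows essentially the same route as the paper's own proof: identify the restriction functor $F \mapsto F \circ \Phi^{op}$ as strong symmetric monoidal (hence strong $\widehat{\mathcal Y}$-linear for the two actions), note it preserves limits, colimits and is conservative by essential surjectivity, invoke Lemma \ref{crucial}(2) for the comparison isomorphism, and conclude via Corollary \ref{coro5.4} with $\mathcal E = \widehat{\mathcal Y}$. Your framing of $\boxtimes$ as the restricted self-action along $\Phi^\ast$ and your explicit conservativity argument are just slightly more detailed renderings of steps the paper leaves to the reader.
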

\begin{proof}
We consider the functor 
\begin{equation}\label{67uh}
\widehat{\Phi} : \widehat{\mathcal Y} = PSh(\mathcal Y) \longrightarrow PSh(\mathcal X) = \widehat{\mathcal X},\quad \widehat{\mathcal Y} \ni F \mapsto F \circ \Phi^{op} \in \widehat{\mathcal X}
\end{equation}Since limits and colimits in presheaf categories are computed objectwise, we note that $\widehat{\Phi} = \_\_ \circ \Phi^{op}$ preserves limits and colimits. In particular, $\widehat{\Phi}$ preserves finite products so that there is a natural isomorphism
\begin{equation}\label{strong6.8}
   \delta  = \left(\delta_{F, G} : \widehat{\Phi}(F) \bullet \widehat{\Phi}(G) \xrightarrow{\text{ }\sim\text{ }} \widehat{\Phi}(F \bullet G)\text{ }\big{\vert}\text{ }F, G \in \widehat{\mathcal Y}\right)
\end{equation}It may now be verified that the bifunctor
\begin{equation}\label{quaton5.37} 
\boxtimes : \widehat{\mathcal Y} \times \widehat{\mathcal X} \longrightarrow \widehat{\mathcal X},\quad(F, H) \mapsto \left(\widehat{\Phi}(F) \bullet H\right) = \left((F \circ \Phi^{op}) \bullet H\right) \in \widehat{\mathcal X}
\end{equation}preserves colimits in both variables and makes $\widehat{\mathcal X}$ into a left $(\widehat{\mathcal Y}, \bullet, 1)$-actegory. We need to show that $(\widehat{\mathcal X}, \boxtimes)$ is subcanonical. We note that the natural isomorphism in \eqref{strong6.8}
\begin{equation}
    \left(F \boxtimes \widehat{\Phi}(G) = \widehat{\Phi}(F) \bullet \widehat{\Phi}(G) \xrightarrow{\text{ }\delta_{F, G}\text{ }}\widehat{\Phi}(F \bullet G)\text{ }\big{\vert}\text{ }F, G \in \widehat{\mathcal Y}\right)
\end{equation}makes $\widehat{\Phi} : (\widehat{\mathcal Y}, \bullet) \longrightarrow (\widehat{\mathcal X}, \boxtimes)$ into a strong $(\widehat{\mathcal Y}, \bullet, 1)$-linear functor. Since $\widehat{\Phi}$ preserves coequalizers, it follows from Lemma \ref{crucial}(2) (with $\mathcal C = \mathcal D = (\widehat{\mathcal Y}, \bullet, 1), \mathbf B = \mathbf A = id_{\widehat{\mathcal Y}}, (\mathcal M, \boxtimes) = (\widehat{\mathcal X}, \boxtimes), (\mathcal N, \boxplus) = (\widehat{\mathcal Y}, \bullet)$ and $(\mathbb L, \Gamma) = (\widehat{\Phi}, \delta)$) that for any morphism $\alpha : F \longrightarrow G$ in $Comm(\widehat{\mathcal Y})$ and any $H \in \mathbf{Mod}_{\widehat{\mathcal Y}}(F)$, the canonical morphism
\begin{equation} 
\theta_{\alpha, id_{\widehat{\mathcal Y}}, \widehat{\Phi}, H} : G \boxtimes_F \widehat{\Phi}(H) \longrightarrow \widehat{\Phi}(G \bullet_F H)
\end{equation}is an isomorphism in $\mathbf{Mod}_{\widehat{\mathcal X}}(G)$.
Since $\Phi : \mathcal X \longrightarrow \mathcal Y$ is essentially surjective, it follows that $\widehat{\Phi}$ is conservative. Using Corollary $\ref{coro5.4}$ (with $\mathcal E = \widehat{\mathcal Y}, (\mathcal P, \boxtimes^{\mathcal P}) = (\widehat{\mathcal X}, \boxtimes), (\mathcal Q, \boxtimes^{\mathcal Q}) = (\widehat{\mathcal Y}, \bullet)$ and $\mathbb S = (\widehat{\Phi}, \delta)$) and the fact that $(\widehat{\mathcal Y}, \bullet)$ is subcanonical as a left $\widehat{\mathcal Y}$-actegory, the result follows.
\end{proof}
Let $A$ be an ordinary monoid with identity element $e$, considered as a one-object category. We recall  that the category $PSh(A)$ is the category $\mathbf{Rep}(A)$ of representations (right actions) of $A$ on sets. An object of $\mathbf{Rep}(A)$ is a pair $(U, \rho)$, where $U$ is a set and $\rho : U \times A \longrightarrow U$ is the action $u.a:=\rho(u,a)$.

\begin{examples}
\emph{Let $\sigma : M \longrightarrow N$ be a morphism of monoids. Then, $\sigma$ is an essentially surjective functor between the one-object categories associated to $M$ and $N$. Using Proposition \ref{preshex}, it follows  that the bifunctor $\boxtimes : \mathbf{Rep}(N) \times \mathbf{Rep}(M) \longrightarrow \mathbf{Rep}(M)$ given by
 \begin{equation}
    (S, \rho_S) \boxtimes (T, \rho_T) = (S \times T, \rho) \in \mathbf{Rep}(M),\qquad (S, \rho_S) \in \mathbf{Rep}(N), (T, \rho_T) \in \mathbf{Rep}(M)
\end{equation}where
\begin{equation} 
\rho : (S \times T) \times M \longrightarrow (S \times T),\quad ((s, t), m)\text{ }\mapsto\text{ }(\rho_S(s, \sigma(m)), \rho_T(t, m)) = (s.\sigma(m), t.m) \in S \times T
\end{equation}preserves colimits in both variables and makes $\mathbf{Rep}(M)$ into a subcanonical left $(\mathbf{Rep}(N), \bullet, 1)$-actegory. Further, for any $F \in Sch(\mathbf{Rep}(N))_{\mathbf{Rep}(M)}$, the sheafification $(F)^{++}_{\mathbf{Rep}(N)}$ of $F$ with respect to the spectral $(\mathbf{Rep}(N), \bullet)$-topology on $Aff_{\mathbf{Rep}(N)}$ is a $\mathbf{Rep}(N)$-scheme.}
\end{examples}
\begin{examples}\label{examp8}
\emph{We recall (see for instance, \cite[$\S$ 2.7]{Mac}) that a directed graph $G$ is a tuple $(V, E, s, t)$ where $V$ and $E$ are sets and $s : E \longrightarrow V$ (resp. $t : E \longrightarrow V$) is a function, called source (resp. target). A morphism $G = (V, E, s, t) \longrightarrow G' = (V', E', s', t')$ of directed graphs is a pair
\begin{equation}
  (f : V \longrightarrow V', g : E \longrightarrow E')
\end{equation}of functions such that $s' \circ g = f \circ s$ and $t' \circ g = f \circ t$. The category of directed graphs and graph morphisms is denoted by $Digrph$.}

\smallskip
\emph{Let $M$ be a monoid and $m, n \in M$. We consider the bifunctor
\begin{equation}\label{eqap6.8}
  \overset{(m, n)}{\boxtimes} : \mathbf{Rep}(M) \times Digrph \longrightarrow Digrph,\qquad ((U, \rho), (V, E, s, t)) \mapsto (U \times V, U \times E, s', t') 
\end{equation}where  
\begin{equation}
  \begin{split}
 s' : U \times E \longrightarrow U \times V,&\quad (u, a) \mapsto (\rho(u, m), s(a)) = (u.m, s(a))\\
 t' : U \times E \longrightarrow U \times V,&\quad (u, a) \mapsto (\rho(u, n), t(a)) = (u.n, t(a))
  \end{split}
\end{equation}We claim that $(Digrph, \overset{(m, n)}{\boxtimes})$ is a subcanonical left $\mathbf{Rep}(M)$-actegory. To see this, we consider the category}
\begin{equation}
   \mathcal X = \left(
\begin{tikzcd}
0 \arrow[rr, "d_0", shift left=2] \arrow[rr, "d_1"', shift right=2] &  & 1
\end{tikzcd}\right)
\end{equation}
\emph{with exactly two objects $0, 1$ and two non-identity arrows $d_0$ and $d_1$. It may be verified that the functor
\begin{equation}
    PSh(\mathcal X) = \widehat{\mathcal X} \longrightarrow Digrph,\qquad \widehat{\mathcal X} \ni G\text{ }\mapsto\text{ }\left(G(0), G(1), G(d_0^{op}), G(d_1^{op})\right) \in Digrph
\end{equation}is an isomorphism of categories. Let $\mathcal Y$ be the one-object category associated to the monoid $M$. Let $\Phi_{m, n} : \mathcal X \longrightarrow \mathcal Y$ be the functor which sends $0$ and $1$ to the unique object of $\mathcal Y$ and the arrow $d_0$ (resp. $d_1$) to $m$ (resp. $n$). It may be verified that the functor $\widehat{\Phi_{m, n}} = \_\_ \circ \Phi_{m, n}^{op} : PSh(\mathcal Y) \longrightarrow PSh(\mathcal X)$ is given by
\begin{equation}
   \_\_ \circ \Phi_{m, n}^{op} : PSh(\mathcal Y) = \mathbf{Rep}(M) \longrightarrow Digrph \cong PSh(\mathcal X),\quad (U, \rho) \mapsto (U, U, s_U, t_U)
\end{equation}where
\begin{equation}
  \begin{split}
    s_U : U \longrightarrow U,&\quad u \mapsto \rho(u, m) = u.m\\
    t_U : U \longrightarrow U,&\quad u \mapsto \rho(u, n) = u.n
  \end{split}
\end{equation}}

\smallskip
\emph{Using \eqref{eqap6.8}, it follows that for $(U, \rho) \in \mathbf{Rep}(M)$ and $G \in Digrph$
\begin{equation} 
\widehat{\Phi_{m, n}}(U, \rho) \bullet G = (U, \rho) \overset{(m, n)}{\boxtimes} G
\end{equation}where $\bullet : Digrph \times Digrph \longrightarrow Digrph$ is the cartesian monoidal structure on $Digrph \cong PSh(\mathcal X)$. Since $\Phi_{m, n} : \mathcal X \longrightarrow \mathcal Y$ is essentially surjective, it follows from Proposition \ref{preshex} that $(Digrph, \overset{(m, n)}{\boxtimes})$ is a subcanonical left $\mathbf{Rep}(M)$-actegory.}
\end{examples}
\begin{examples}
\emph{In particular, let $M$ be the monoid with exactly one element $e$. It is clear that the category $\mathbf{Rep}(M) = PSh(M)$ is the cartesian monoidal category $Set$. It follows from Example \ref{examp8} that the bifunctor
\begin{equation}
    \overset{(e, e)}{\boxtimes} : Set \times Digrph \longrightarrow Digrph,\quad (S, G)\text{ }\mapsto\text{ }(S, S, 1_S, 1_S) \bullet G \in Digrph
\end{equation}makes $Digrph$ into a subcanonical left $Set$-actegory.}
\end{examples}
\begin{examples}
    \emph{We consider the canonical functor
\begin{equation}
    disc : Set \longrightarrow Top
\end{equation}that sends every set to the associated discrete topological space. We note that $disc$ is conservative, preserves finite limits and is left adjoint to the forgetful functor $U : Top \longrightarrow Set$. In particular, since $disc$ preserves finite products, there is a natural isomorphism 
\begin{equation}\label{cacac6.24}
    \delta = \left(disc(S) \times disc(T) \xrightarrow{\text{ }\sim\text{ }} disc(S \times T)\text{ }\big{\vert}\text{ }S, T \in Set\right)
\end{equation}such that $(disc, \delta, 1 \xrightarrow{\text{ }id\text{ }} disc(1)) : (Set, \times, 1) \longrightarrow (Top, \times, 1)$ is a strong monoidal functor. It follows that the bifunctor $\boxtimes : Set \times Top \longrightarrow Top$ given by}
\begin{equation}
\begin{split}
    Set \times Top \xrightarrow{\text{ }disc \times 1_{Top}\text{ }} &\text{ }Top \times Top \xrightarrow{\qquad\times\qquad} Top\\
    (S, X)\qquad\longmapsto \qquad &(disc(S), X)\quad\longmapsto\quad disc(S) \times X \cong \coprod_{S} X
\end{split}
\end{equation}\emph{determines a left $(Set, \times, 1)$-actegory structure on $Top$. Let $S \in Set$. Since coproducts commute with colimits, the functor
\begin{equation} 
S \boxtimes \_\_ : Top \longrightarrow Top, X \mapsto \coprod_{S} X
\end{equation}preserves colimits. Also for $X \in Top$, since the functor $\_\_ \boxtimes X : Set \longrightarrow Top$ is left adjoint to the functor $Top(X, \_\_) : Top \longrightarrow Set$, $\_\_ \boxtimes X$ preserves colimits.}

\smallskip
\emph{We claim that the left $(Set, \times, 1)$-actegory $(Top, \boxtimes)$ is subcanonical. It may be verified that $\delta$ in \eqref{cacac6.24} determines a strong $(Set, \times, 1)$-linear structure on the functor
\begin{equation} 
disc : (Set, \times) \longrightarrow (Top, \boxtimes)
\end{equation}Since $disc$ preserves coequalizers, it follows from Lemma \ref{crucial}(2) (with $\mathcal C = \mathcal D = (Set, \times, 1), \mathbf B = \mathbf A = 1_{Set}, (\mathcal M, \boxtimes) = (Top, \boxtimes), (\mathcal N, \boxplus) = (Set, \times), (\mathbb L, \Gamma) = (disc, \delta)$) that for any morphism $\alpha : A \longrightarrow B$ in $Comm(Set)$ and any $N \in \mathbf{Mod}_{Set}(A)$, the canonical morphism} 
\begin{equation}
    \theta_{\alpha, 1_{Set}, disc, N} : B \boxtimes_{A} disc(N) \longrightarrow disc(B \times_{A} N)
\end{equation}\emph{is an isomorphism in $\textbf{Mod}_{Top}(B)$. It now follows from \cite[Corollary 2.11]{TV} and from Corollary \ref{coro5.4} (with $\mathcal E = (Set, \times, 1), (\mathcal P, \boxtimes^{\mathcal P}) = (Top, \boxtimes), (\mathcal Q, \boxtimes^{\mathcal Q}) = (Set, \times), \mathbb S = (disc, \delta)$) that the left $Set$-actegory $(Top, \boxtimes)$ is subcanonical.}
\end{examples}

\begin{bibdiv}
\begin{biblist}

\bib{BaDo}{article}{
   author={Baez, J.C.},
   author={Dolan, J.},
   title={Higher-dimensional algebra. III. $n$-categories and the algebra of
   opetopes},
   journal={Adv. Math.},
   volume={135},
   date={1998},
   number={2},
   pages={145--206},
}

\bib{Ban1}{article}{
   author={Banerjee, A.},
   title={Derived schemes and the field with one element},
   journal={J. Math. Pures Appl. (9)},
   volume={97},
   date={2012},
   number={3},
   pages={189--203},
}

\bib{Ban2}{article}{
   author={Banerjee, A.},
   title={Schemes over symmetric monoidal categories and torsion theories},
   journal={J. Pure Appl. Algebra},
   volume={220},
   date={2016},
   number={9},
   pages={3017--3047},
}

\bib{Ban3}{article}{
   author={Banerjee, A.},
   title={On Noetherian schemes over $( C, \otimes, 1)$ and the category of
   quasi-coherent sheaves},
   journal={Indiana Univ. Math. J.},
   volume={70},
   date={2021},
   number={1},
   pages={81--119},
}

\bib{Ben}{article}{
   author={B\'{e}nabou, J.},
   title={Introduction to bicategories},
   conference={
      title={Reports of the Midwest Category Seminar},
   },
   book={
      series={Lecture Notes in Math., No. 47},
      publisher={Springer, Berlin-New York},
   },
   date={1967},
   pages={1--77},
}

\bib{BEK}{article}{
   author={B\"{o}ckenhauer, J.},
   author={Evans, D. E.},
   author={Kawahigashi, Y.},
   title={Chiral structure of modular invariants for subfactors},
   journal={Comm. Math. Phys.},
   volume={210},
   date={2000},
   number={3},
   pages={733--784},
}

\bib{Bor}{book}{
   author={Borceux, F.},
   title={Handbook of categorical algebra. 1},
   series={Encyclopedia of Mathematics and its Applications},
   volume={50},
   note={Basic category theory},
   publisher={Cambridge University Press, Cambridge},
   date={1994},
   pages={xvi+345},

}
\bib{Bor2}{book}{
   author={Borceux, F.},
   title={Handbook of categorical algebra. 2},
   series={Encyclopedia of Mathematics and its Applications},
   volume={51},
   note={Categories and structures},
   publisher={Cambridge University Press, Cambridge},
   date={1994},
   pages={xviii+443},
}

\bib{act}{article}{
   author={Capucci, M.},
   author={Gavranovi\'{c}, B.},
   title={Actegories for the Working Amthematician},
   journal={arXiv:2203.16351},
   date={2022},
}

\bib{CC}{article}{
   author={Connes, A.},
   author={Consani, C.},
   title={Schemes over $\Bbb F_1$ and zeta functions},
   journal={Compos. Math.},
   volume={146},
   date={2010},
   number={6},
   pages={1383--1415},
}

\bib{CCJAG}{article}{
   author={Connes, A.},
   author={Consani, C.},
   title={On the notion of geometry over $\Bbb F_1$},
   journal={J. Algebraic Geom.},
   volume={20},
   date={2011},
   number={3},
   pages={525--557},
}

\bib{CCJNT}{article}{
   author={Connes, A.},
   author={Consani, C.},
   title={Absolute algebra and Segal's $\Gamma$-rings: au dessous de
   $\overline{{\rm Spec}(\Bbb Z)}$},
   journal={J. Number Theory},
   volume={162},
   date={2016},
   pages={518--551},
}

\bib{CC2}{article}{
   author={Connes, A.},
   author={Consani, C.},
   title={On absolute algebraic geometry the affine case},
   journal={Adv. Math.},
   volume={390},
   date={2021},
   pages={Paper No. 107909, 44},
}

\bib{Del}{article}{
   author={Deligne, P.},
   title={Cat\'egories tannakiennes},
   conference={
      title={The Grothendieck Festschrift, Vol.\ II},
   },
   book={
      series={Progr. Math.},
      volume={87},
      publisher={Birkh\"auser Boston, Boston, MA},
   },
   date={1990},
   pages={111--195},
}

\bib{EO}{article}{
   author={Etingof, P.},
   author={Ostrik, V.},
   title={Finite tensor categories},
   journal={Mosc. Math. J.},
   volume={4},
   date={2004},
   number={3},
   pages={627--654, 782--783},
}

\bib{Et0}{article}{
   author={Etingof, P.},
   author={Nikshych, D.},
   author={Ostrik, V.},
   title={On fusion categories},
   journal={Ann. of Math. (2)},
   volume={162},
   date={2005},
   number={2},
   pages={581--642},
}

\bib{Et}{article}{
   author={Etingof, P.},
   author={Nikshych, D.},
   author={Ostrik, V.},
   title={Fusion categories and homotopy theory},
   note={With an appendix by Ehud Meir},
   journal={Quantum Topol.},
   volume={1},
   date={2010},
   number={3},
   pages={209--273},
}

\bib{EGNO}{book}{
   author={Etingof, P.},
   author={Gelaki, S.},
   author={Nikshych, D.},
   author={Ostrik, V.},
   title={Tensor categories},
   series={Mathematical Surveys and Monographs},
   volume={205},
   publisher={American Mathematical Society, Providence, RI},
   date={2015},
   pages={xvi+343},
}

\bib{FS}{article}{
   author={Fuchs, J.},
   author={Schweigert, C.},
   title={Category theory for conformal boundary conditions},
   conference={
      title={Vertex operator algebras in mathematics and physics},
      address={Toronto, ON},
      date={2000},
   },
   book={
      series={Fields Inst. Commun.},
      volume={39},
      publisher={Amer. Math. Soc., Providence, RI},
   },
   date={2003},
   pages={25--70},
}
\bib{Garn}{article}{
   author={Garner, R.},
   title={An embedding theorem for tangent categories},
   journal={Adv. Math.},
   volume={323},
   date={2018},
   pages={668--687},
}

\bib{Gel}{article}{
   author={Gelaki, S.},
   title={Module categories over affine supergroup schemes},
   journal={J. Pure Appl. Algebra},
   volume={225},
   date={2021},
   number={11},
   pages={Paper No. 106711, 19},
}

\bib{Hak}{book}{
   author={Hakim, M.},
   title={Topos annel\'es et sch\'emas relatifs},
   series={Ergebnisse der Mathematik und ihrer Grenzgebiete },
   volume={Band 64},
   publisher={Springer-Verlag, Berlin-New York},
   date={1972},
   pages={vi+160},
}

\bib{JK}{article}{
   author={Janelidze, G.},
   author={Kelly, G. M.},
   title={A note on actions of a monoidal category},
   note={CT2000 Conference (Como)},
   journal={Theory Appl. Categ.},
   volume={9},
   date={2001/02},
   pages={61--91},
}

\bib{Mac}{book}{
   author={Mac Lane, S.},
   title={Categories for the working mathematician},
   series={Graduate Texts in Mathematics},
   volume={5},
   edition={2},
   publisher={Springer-Verlag, New York},
   date={1998},
   pages={xii+314},
}

\bib{MM}{book}{
   author={Mac Lane, S.},
   author={Moerdijk, I.},
   title={Sheaves in geometry and logic},
   series={Universitext},
   publisher={Springer-Verlag, New York},
   date={1994},
   pages={xii+629},
} 

\bib{Mart1}{article}{
   author={Marty, F.},
   title={Relative Zariski open objects},
   journal={J. K-Theory},
   volume={10},
   date={2012},
   number={1},
   pages={9--39},
}

\bib{Mart2}{article}{
   author={Marty, F.},
   title={Smoothness in relative geometry},
   journal={J. K-Theory},
   volume={12},
   date={2013},
   number={3},
   pages={461--491},
}

\bib{O}{article}{
   author={Ocneanu, A.},
   title={Paths on Coxeter diagrams: From platonic solids and singularities to minimal models and subfactors},
   conference={
      title={Lectures on Operator Theory},
   },
   book={
      series={Fields Inst. Monogr.},
      publisher={ Amer. Math. Soc.},
   },
   date={2000},
   pages={243--323},
}

\bib{Ost}{article}{
   author={Ostrik, V.},
   title={Module categories, weak Hopf algebras and modular invariants},
   journal={Transform. Groups},
   volume={8},
   date={2003},
   number={2},
   pages={177--206},
}

\bib{Street}{article}{
   author={Street, R.},
   title={Polynomials as spans},
   journal={Cah. Topol. G\'{e}om. Diff\'{e}r. Cat\'{e}g.},
   volume={61},
   date={2020},
   number={2},
   pages={113--153},
}

\bib{Ks}{article}{
   author={Szlach\'{a}nyi, K.},
   title={Finite quantum groupoids and inclusions of finite type},
   conference={
      title={Mathematical physics in mathematics and physics},
      address={Siena},
      date={2000},
   },
   book={
      series={Fields Inst. Commun.},
      volume={30},
      publisher={Amer. Math. Soc., Providence, RI},
   },
   date={2001},
   pages={393--407},
}

\bib{TVe1}{article}{
   author={To\"en, B.},
   author={Vezzosi, G.},
   title={Homotopical algebraic geometry. I. Topos theory},
   journal={Adv. Math.},
   volume={193},
   date={2005},
   number={2},
   pages={257--372},
}

\bib{TVe2}{article}{
   author={To\"en, B.},
   author={Vezzosi, G.},
   title={Homotopical algebraic geometry. II. Geometric stacks and
   applications},
   journal={Mem. Amer. Math. Soc.},
   volume={193},
   date={2008},
   number={902},
   pages={x+224},
}

\bib{TV}{article}{
   author={To\"{e}n, B.},
   author={Vaqui\'{e}, M.},
   title={Au-dessous de ${\rm Spec}\,\Bbb Z$},
   journal={J. K-Theory},
   volume={3},
   date={2009},
   number={3},
   pages={437--500},
}

\bib{WaYa}{article}{
   author={Walton, C.},
   author={Yadav, H.},
   title={Nondegenerate module categories},
   journal={Math. Z.},
   volume={310},
   date={2025},
   number={2},
   pages={Paper No. 37, 39},
}

\end{biblist}
\end{bibdiv}
\end{document}